\documentclass[12pt,a4paper,intlimits,sumlimits]{amsart}

\usepackage{amsmath, amsthm, mathtools}
\usepackage{amsfonts}
\usepackage[alphabetic,nobysame,initials]{amsrefs}
\usepackage{graphicx}
\usepackage{framed}
\usepackage{amssymb}
\usepackage{esvect}
\usepackage{xcolor}
\usepackage{eucal}
\usepackage{mathrsfs}
\usepackage{hyperref}
\usepackage[english]{babel}
\usepackage{mathrsfs}
\usepackage{esint}
\usepackage{accents}
\usepackage{tabularx}
\usepackage{booktabs}

\def \R {\mathbb{R}}

\theoremstyle{definition}
\newtheorem{definition}{Definition}[section]
\newtheorem{example}[definition]{Example}
\newtheorem{remark}[definition]{Remark}

\theoremstyle{plain}
\newtheorem{theorem}[definition]{Theorem}
\newtheorem{proposition}[definition]{Proposition}
\newtheorem{lemma}[definition]{Lemma}

\numberwithin{equation}{section}

\usepackage{geometry}
\renewcommand{\epsilon}{\varepsilon}

\renewcommand{\leq}{\leqslant}
\renewcommand{\le}{\leqslant}
\renewcommand{\geq}{\geqslant}
\renewcommand{\ge}{\geqslant}

\allowdisplaybreaks

\title[Free boundary analysis for tumor invasion]{A free boundary analysis \\ of tumor invasion driven by angiogenesis}
 
\author[S. Dipierro, E. Proietti Lippi, C. Sportelli and E. Valdinoci]{Serena Dipierro, Edoardo Proietti Lippi, \\Caterina Sportelli and Enrico Valdinoci}

\address{Serena Dipierro: Department of Mathematics and Statistics, The University of Western Australia, 35 Stirling Highway, Crawley, Perth, WA 6009, Australia}
\email{serena.dipierro@uwa.edu.au}

\address{Edoardo Proietti Lippi: Department of Mathematics and Statistics, The University of Western Australia, 35 Stirling Highway, Crawley, Perth, WA 6009, Australia}
\email{edoardo.proiettilippi@uwa.edu.au}

\address{Caterina Sportelli: Departamento de Análisis Matemático, Universidad de Granada, 18071 Granada, Spain  \& Department of Mathematics and Statistics, The University of Western Australia, 35 Stirling Highway, Crawley, Perth, WA 6009, Australia}
\email{caterina.sportelli@uwa.edu.au, caterina.sp@ugr.es}

\address{Enrico Valdinoci: Department of Mathematics and Statistics, The University of Western Australia, 35 Stirling Highway, Crawley, Perth, WA 6009, Australia}
\email{enrico.valdinoci@uwa.edu.au}

 \date{}

\begin{document}

\begin{abstract}
We discuss a free boundary model for tumor invasion that describes a cloud of cells that diffuse and, at the same time, are drifted along the vector field of the chemotactic direction.  The model captures the evolution of a solid tumor, including the process of angiogenesis, which consists in the formation of new blood vessels that supply the tumor with oxygen and other nutrients, thereby promoting its spread and growth.

We prove that, once formed,  the tumor survives through time, maintaining strictly positive thickness. An explicit expression in terms of the initial data is derived. 

Moreover, we distinguish two regimes depending on the ratio $\kappa$ between the spreading of tumor cells and the growth of the tumor mass. If $\kappa$ is sufficiently large, then the tumor grows exponentially in time and invades the entire host tissue. In contrast, if $\kappa$ is small enough, then either the tumor remains bounded in size over time or may experience a fast contraction.
 \end{abstract}
 \maketitle

\tableofcontents
  
\section{Introduction} \label{Intro}
\subsection{A general model}\label{INT1.1}
In this paper, we investigate the evolution of a tumor invasion.
Many models have been proposed for this scope;
the classical literature in the field of tumor dynamics heavily relies on reaction-diffusion processes (see e.g.~\cite{MR1684873, MR1920634, MR3401608}) and
various settings are typically described by different equations
(see e.g.~\cite[equations~(3.1a) and~(5.1)]{ARM06},
\cite[equations~(1) and~(2)]{MR3022677.13}, \cite[equation~(1.1)]{MR3260704.14},
\cite[equations~(1.1) and~(1.2)]{2020nonlocalcelladhesionmodels},
\cite[equation~(6.3)]{AC22}, \cite[equation~(37.2)]{MRELE}, and~\cite[equations~(2) and~(3)]{abderrahman2025optimalcontrolmedicaldrug25}).

In this paper, we  consider a model of the form
\begin{equation}\label{dvtECNhajkcldkm}
u_t(x, t) = \Delta u(x, t) + \chi(c) \nabla c(x, t)\cdot\nabla u(x, t), 
\end{equation}
in which a radially directed advection (chemotaxis) is superposed to a pure diffusion term~$\Delta u$. 
We now describe in detail the quantities involved
in equation~\eqref{dvtECNhajkcldkm}.

Chemotaxis is the phenomenon by which the movement of cells is directed in response to an extracellular chemical gradient. In the specific context of cancer, chemotaxis pathways can be reprogrammed in favour of tumor cell dissemination.  For example, to successfully metastasize, a carcinoma cell must invade, intravasate, extravasate and grow at a distant site. Chemotaxis is thought to be involved in each of these crucial steps of tumor cell dissemination (see e.g.~\cite{PMC4030706}).

In the framework of the tumor models, chemoattractants are signaling molecules with the role to direct the motion of cancer (or immune) cells toward areas where their concentration is higher.
In the analysis of tumor models,  one of the mostly employed chemoattractants is the Vascular Endothelial Growth Factor (usually shortened as VEGF). It is a signaling protein that plays a key role in the process of angiogenesis and behaves as a chemoattractant for endothelial cells, guiding them toward the tumor with the aim to form new blood vessels. 
In vivo studies of chemotaxis have shown that tumor cells can respond to shallow gradients generated from various devices designed to deliver known amounts of VEGF and other chemoattractants (see~\cite[page 12]{PMC4030706} and the references therein).

Throughout the paper, we will denote by $c(x, t)$ the concentration of a chemoattractant (e.g. VEGF), that is the spatial distribution and amount of chemoattractant present in the tumor environment at the time $t$. As is well known, tumor cells and/or endothelial cells are capable of detecting and moving toward higher concentrations of these molecules, with the migration guided by the concentration gradient. Also, the concentration of chemoattractants in the tumor microenvironment often depends on time and it changes at different stages of the tumor growth (see e.g.~\cite{CHAP98, MR3793187} and the references therein).

In addition, we denote by $\chi=\chi(c)$ the chemotactic sensitivity function,  which plays the role of tracking how strongly cells respond to the VEGF gradient.  A high chemotactic sensitivity suggests that the tumor cells are highly responsive to the chemical gradients. This leads to a more aggressive migration and to a possibly faster tumor growth. Instead,  a lower sensitivity indicates a slower directional migration.

We point out that various functional forms have been proposed for $\chi(c)$. For a positive constant $\chi_0$, these include a constant law
\[
\chi(c):=\chi_0,
\]
a receptor kinetic law
\[
\chi(c):=\frac{\chi_0\, k}{(k+c)^2},
\] for a given~$k\in(0,+\infty)$,
and a logarithmic\footnote{The name ``logarithmic'' that the literature attached to~\eqref{chilog}
is perhaps confusing, since it could be more appropriate to call it
an ``inverse law'' or ``hyperbolic law''. The reason it is often called logarithmic is likely due to the fact that
when one computes total quantities, one obtains a logarithmic expression via integration.
We kept here the standard nomenclature on the topic, as in~\cite[page~155]{ChapSt93}.} law
\begin{equation}\label{chilog}
\chi(c):=\frac{\chi_0}{c},
\end{equation}
see e.g.~\cite{ChapSt93}. 

Having a constant chemotactic sensitivity function means that all the cells respond equally strongly to the gradient, regardless of the concentration of the chemoattractant. This choice is typical when introducing simplest models.

Instead,  a receptor kinetic law is used for the chemotactic sensitivity function to reflect the assumption that chemotactic sensitivity decreases
with increased VEGF concentration (see e.g.~\cite{CHAP98}).

On the other hand, the logarithmic expression in~\eqref{chilog} contributes to the implementation of a realistic scenario,
since many cells and microorganisms respond approximately to relative changes in chemical concentration rather than absolute changes: indeed, \eqref{chilog} indicates that the cell chemotactic response to VEGF follows the Weber--Fechner's law, which has several applications in biological modelings (see e.g.~\cite{KSBACTERIA} and the references therein).
Accordingly,  in our model, we will choose $\chi(c)$ as in~\eqref{chilog}.

As described in \cite{greenspan}, when a tumor is small, nutrients can reach every cell, allowing for rapid growth. As the tumor grows, nutrient levels drop toward the center. Eventually,  a vital nutrient falls below the threshold needed for cell survival, forming a central area called in jargon necrotic core. This leads to a noticeable slowdown in the tumor’s overall growth rate, as it becomes more difficult for cells to take in nutrients and remove waste by diffusion alone.  The typical steady-state structure at this point is a tiny sphere made up of three concentric zones. Namely, cells are still actively growing and dividing in the thin outer layer. In the intermediate zone, cells remain alive but show little mitotic activity.  Finally, the innermost core is composed of necrotic material at various stages of breakdown.

For the purposes of this model,  we assume that the tumor consists of two distinct regions, an outer crust in which all cells are proliferating and an inner region in which no cells are proliferating (there is no
need in this derivation to distinguish between the mantle and the necrotic core,
both of which are nonproliferative).  For simplicity, we assume that the non-proliferative region (referred to here as the necrotic core) consists
of a ball of given radius $\varepsilon>0$,
and that the cell density on the tumor boundary is constant,
say equal to some~$\overline{u}\in(0,+\infty)$.
Moreover, we assume that the necrotic core radius is fixed (this is a fairly standard assumption in the literature, see e.g. \cite{LU2023}), with the radius of the spheroid, whose evolution in time represents the growth of the tumor, governing the overall expansion.
 
We let $\Omega(t)$ denote the region occupied by the tumor at time $t$.
This region will include the necrotic core~$B_\varepsilon$
and our aim is to describe the evolution of the cell concentration
in the ``active tumor region''~$\Omega(t)\setminus B_\varepsilon$.
This corresponds to a mathematical model given by
\begin{equation}\label{maingen1}
\begin{cases}
u_t (x,t)= \Delta u(x, t) +\chi(c) \nabla c(x, t)\cdot\nabla u(x, t) &\quad\mbox{ in } \Omega(t)\setminus B_\varepsilon,\\
u(x, t)=\overline{u} &\quad\mbox{ on } \partial B_\varepsilon,\\
u(x, t)=\underline{u} &\quad\mbox{ on } \partial\Omega(t)\setminus\partial B_\varepsilon,\\
\end{cases}
\end{equation} for some~$\overline{u}$, $\underline{u} \in(0,+\infty)$.

We will suppose that~$\overline{u}>\underline{u}$ and, in the medical application,
the density~$\underline{u}$ represents the density threshold
for the survival of tumor cells.
Indeed, low cell density poses severe survival challenges for cells, whereas cooperative chemical signaling and growth factors promote cell survival and proliferation at higher densities. See~\cite{qwjudfbn23o4t52022, MR4648489, COOPETU25} for experimental evidence of this phenomenon in tumors.

In tumor models, cell density is often related to the density of the nutrients.
For example, in~\cite{MR2237681}
the density of the cells  is first supposed to depend on the concentration of nutrients, and thus, assuming that this dependence is linear, the two densities are identified. In this spirit, from the medical perspective, the above model posits that the tumor mass occupies the spatial region
in which the nutrient is sufficiently abundant for the tumor cells to be active and possibly proliferate.

There is, however, an important caveat to keep in mind when identifying cell and nutrient densities. While this identification is reasonable at the macroscopic scale or when describing steady states, greater care is required when considering the corresponding evolution equations. Tumor cells are active agents that can migrate toward higher concentrations of chemical signals through chemotaxis. In contrast, nutrients such as oxygen and glucose are passive chemical species that spread by diffusion and lack the biological machinery needed to sense or actively move along chemical gradients. Consequently, chemotactic terms are typically included in the evolution equations for tumor cells rather than for nutrient concentrations
(and, for this reason, in this paper we will focus on tumor cell density, rather than nutrient density).

It is also true, however, that nutrients and chemoattractants should not be viewed as completely independent chemical species within the tissue. In practice, both are transported not only by diffusion but also by advection due to the interstitial fluid flow, namely the movement of fluid through the extracellular matrix between blood and lymphatic vessels. This flow provides an important transport mechanism for dissolved molecules, including large proteins that diffuse only slowly.
Moreover, interstitial fluid flow plays an important role in shaping the tumor immune microenvironment and has been shown to promote cancer cell invasion in several tumor types. Such invasion constitutes one mechanism by which tumors can evade therapy and subsequently recur (see, e.g., \cite{PMC4144982}).

Consequently, when interstitial flow is taken into account, it advects both nutrients and chemoattractants along the underlying velocity field. By redistributing these chemical cues, the flow can either enhance or impede the migratory behavior of cancer cells during tumor invasion, depending on the direction and magnitude of the flow relative to the chemotactic gradients.
Therefore,
although nutrients are not chemotactic, their evolution is indirectly affected by chemotaxis through the redistribution of the tumor cell population, which modifies nutrient consumption rates.

{F}rom the mathematical standpoint, problem~\eqref{maingen1} is structured as a particular version of the Kolmogorov equation (see e.g.~\cite{MR2597943}), namely as the non‑divergence form of a diffusion process with velocity field given by~$ \chi(c) \nabla c$.
In this context, the term~$\chi(c) \nabla c\cdot\nabla$ describes how cell density exchanges occur in the direction where the concentration of the chemoattractant is higher, namely the tumor cells show a tendency to move toward regions with more of the chemical signal. 
At each point $x$, the vector $\nabla c(x,t)$ points ``uphill'' in chemoattractant concentration, namely it points toward locations where the chemoattractant is more concentrated (toward the tumor core).

We can distinguish different scenarios. If the gradient~$\nabla u$ share the same orientation as~$\nabla c$, then the concentration of the tumor cells increases in the same direction as the chemoattractant gradient.  Thus, the growth of~$u$ in that region tends to increase
because the advection term is positive. Hence, tumor cells spread over the area to migrate following the VEGF gradient.
If instead the gradient~$\nabla u$ points away from~$\nabla c$, then we have that the concentration of cells is lower in the direction of increasing VEGF.
Since the advection term turns to be negative, it helps accumulate the cells away from that specific region. Finally, when the gradient~$\nabla u$ is orthogonal to~$\nabla c$,  the change in the concentration of cells occurs in a direction which is not affected by the chemical gradient. This suggests that the cell exchanges between different regions are governed only by a diffusion process and the migration term has no effect in that specific direction.

We point out that problem~\eqref{maingen1} is of independent interest and can be suitably tailored to describe the evolution of different tumor models (e.g. immune system interaction models, metastasis models, crowding and competition models, hypoxia-driven models, metabolic models).

\subsection{Derivation of a model for tumor with angiogenesis}
We now focus on our main model of interest, namely we address the case in which problem~\eqref{maingen1} is ``completed" to establish a model that outlines the evolution process of a solid tumor with angiogenesis. In the rest of the paper we will focus on the physically meaningful dimensional case~$N=3$.

Since cancer cells proliferate abnormally fast, they require more oxygen and
other nutrients than the normal capillary system can provide.  For this reason, cancer cells initiate
a process, called angiogenesis, that induces blood vessels to sprout capillary tips which migrate toward and penetrate into the tumor, thus providing it with circulating blood supply.
However, angiogenesis not only facilitates tumor growth by supplying nutrients and oxygen to the tumor, but is also a prerequisite for tumor cell dissemination and metastasis.  Indeed, blood vessel density is correlated with a higher incidence of metastasis and a more rapid recurrence of disease (see~\cite{PMC4030706}).

For the sake of simplicity, we assume that the tumor is spherically symmetric, namely, for every~$t\ge0$,
\begin{equation}\label{Omegapalla}
\Omega(t)=B_{s(t)}
\end{equation}
for some~$s(t)>\varepsilon$.
Correspondingly,
the boundary of the tumor is given by the sphere~$\partial B_{s(t)}$.

Also, here we consider a steady state approximation of the concentration of chemoattractants, meaning that $c(x, t)$ is assumed to have reached a balance between production, diffusion, and consumption.  Hence, the time dependence in~$c$ is dropped.

Furthermore, we assume that the concentration of the chemoattractant~$c$ is radially symmetric. When we consider radial functions,
from now on, with a slight abuse of notation, we will use~$c(r)$
to denote the spherically symmetric
function~$c(x)$ when~$|x|=r$ (and a similar notation
will be utilized for all radial functions later on).
As a result, we write
\begin{equation}\label{c-radiale}
c=c(r).
\end{equation}

Under these assumptions, the main free boundary problem of interest for us takes the form
\begin{equation}\label{main1}
\begin{cases}
\kappa u_t(x,t) = \Delta u(x, t) +\chi(c) \nabla c(x)\cdot\nabla u(x, t) + \Gamma(u_B - u(x, t)) -\lambda_0 u(x, t) &\;\mbox{ in } B_{s(t)}\setminus\overline{B_{\varepsilon}},\\
u(x, t) = \underline{u} &\;\mbox{ on } \partial B_{\varepsilon}, \\
u(x, t) = \overline{u} &\;\mbox{ on } \partial B_{s(t)}, \\
u(x, 0) = u_0(|x|) &\;\mbox{ in } B_{s(0)}\setminus\overline{B_{\varepsilon}},
\end{cases}
\end{equation}
where $\kappa\in(0,+\infty)$, $s(0)\in(\varepsilon,+\infty)$ and~$u_0
\in C^1([\varepsilon,s(0)])$. 

The positive constant~$\lambda_0$ accounts for tumor cell loss due to processes such as
necrosis and
lack of resources and thus can be considered a death rate.

The positive constant~$\Gamma$ is a relaxation rate constant
that determines how quickly the tumor cell density is restored toward the reference value~$u_B$.
In this regard,
the positive constant~$u_B$ represents a ``preferred bulk tumor cell density'', namely a
baseline tumor cell density maintained by local tissue conditions, accounting for a balance between cell proliferation and loss mechanisms not explicitly modeled.
Note that, according to~\eqref{main1},
the tumor cell population has a tendency to remain close to the equilibrium density~$u_B$
(a larger~$\Gamma$ means a faster adjustment toward~$u_B$,
whereas a smaller~$\Gamma$ corresponds to a slower replenishment process).

In this paper,
we assume that, at each time $t$, there are no volumetric sources or sinks for the chemoattracant in the region $\{\varepsilon < r <s(t)\}$. Hence, the concentration of the chemoattractant satisfies the equation
\begin{equation}
\Delta c =0 \quad\mbox{ in $B_{s(t)}\setminus\overline{B_\varepsilon}$,  for all~$ t>0$.}
\end{equation}
Thus, in light of~\eqref{c-radiale}, 
\begin{equation}
c(r) = \frac{A}{r} + B \quad\mbox{ in $B_{s(t)}\setminus\overline{B_\varepsilon}$, for all~$ t>0$,}
\end{equation}
for some~$A$, $B\in\R$.  

We will also assume that
the chemoattractant density decays away from the region of interest considered in this paper,
namely that
\[
\lim_{r\to +\infty} c(r)=0,
\] and this gives~$B=0$.

Moreover, since the function $c=c(r)$ is supposed to be continuous at~$r=\varepsilon$, we find that~$A=c(\epsilon)\epsilon$.
These considerations give that
\begin{equation}\label{cfin}
c(r) = \frac{c(\varepsilon)\varepsilon}{r} \quad\mbox{ in $B_{s(t)}\setminus\overline{B_\varepsilon}$, for all~$ t>0$.}
\end{equation}

As previously mentioned, throughout this paper we assume that the chemotactic sensitivity function is of the form~\eqref{chilog}.
Thus, from~\eqref{chilog} and~\eqref{cfin}, we deduce that
\begin{equation}\label{evcpbioòdnc}
\chi(c) \nabla c(x)\cdot\nabla u(x, t) = -\frac{\chi_0}{r^2}\, x\cdot\nabla u(x, t) \quad\mbox{ in $B_{s(t)}\setminus\overline{B_\varepsilon}$, for all~$ t>0$.}
\end{equation}

We also suppose that~$u_0(r)$ is bounded below by $\frac{\Gamma u_B}{\lambda}$ and above by~$\overline{u}$ in~$\Omega(0)\setminus B_\varepsilon$, namely
\begin{equation}\label{u_0positiva}
0<\frac{\Gamma u_B}{\lambda}\le u_0(r)\le\overline{u}, \qquad \mbox{if }\,\,\varepsilon\leq r\leq s(0).
\end{equation}
We point out that, in general, \eqref{u_0positiva} can be relaxed to
\[
0< u_0(r)\le\overline{u}, \qquad \mbox{if }\,\,\varepsilon\leq r\leq s(0).
\]
However, the lower bound $\frac{\Gamma u_B}{\lambda}$ is useful, in our setting, to establish the 
forthcoming Lemma~\ref{lemma0<u<utilde}.
This lower bound corresponds, from a biological point of view, to a regime in which blood–tissue cell transfer is small compared with the death rate in a dense tumor cell population. 

Moreover, we set
$$ \lambda:=\Gamma+\lambda_0.$$
Accordingly, in this framework we deduce that the free boundary problem~\eqref{main1} reduces to
\begin{equation}\label{mainfinal1}
\begin{cases}
\kappa u_t (x,t)= \Delta u(x, t) - \displaystyle\frac{\chi_0}{r^2}\, x\cdot\nabla u(x, t) -\lambda u(x, t)+ \Gamma u_B 
&\quad\mbox{ in }B_{s(t)}\setminus\overline{B_\varepsilon},\\
u(x, t) = \underline{u}, &\quad\mbox{ on } \partial B_\varepsilon \\
u(x, t) = \overline{u} &\quad\mbox{ on } \partial B_{s(t)}, \\
u(x, 0) = u_0(|x|) &\quad\mbox{ in } B_{s(0)}\setminus B_\varepsilon.
\end{cases}
\end{equation}
In this setting, in
the region~$\Omega(t)$
the concentration~$u$ of the tumor cells remains above the surviving
threshold~$\frac{\Gamma u_B}{\lambda}$
(see Lemma \ref{lemma0<u<utilde}
below which indeed confirms that the density~$u$,
solving~\eqref{mainfinal1}
remains above the threshold~$\frac{\Gamma u_B}{\lambda}$
in the region~$\Omega(t)$).

We point out that the solution~$u$ of~\eqref{mainfinal1} is rotationally symmetric (this can be checked by considering a rotation~${\mathcal{R}}$ and the function~$u_{{\mathcal{R}}}(x):=u({\mathcal{R}}x)$, which is also a solution due to the spherical symmetry of~$\Omega(t)$ and~$c$, then, by uniqueness of solutions, we have that~$u=u_{{\mathcal{R}}}$, see~\cite[Corollary~5.2]{MR1465184}).

Similar equations (though in the absence of the necrotic core) have been considered in~\cite{MR2176317}, where existence, uniqueness, and regularity results have been presented (see also~\cite{MR3950722} for the case of Robin boundary conditions). Also motivated by these results, we will always implicitly assume here that solutions of~\eqref{mainfinal1} are classical (not only distributional).

{F}rom now on, we will focus on the specific case in which
\begin{equation}\label{mainfinal1:DUE}
\chi_0:=2.
\end{equation}
This choice is due to computational convenience, since it allows one to construct some explicit solutions which
can be used as barriers for comparison arguments (see the forthcoming Proposition~\ref{propsoluzionestazionaria}).
We think that the case of more general values of~$\chi_0$ is interesting and deserves further investigation. 

It is also
worth noting that the problem is quite different from the ones already present in the literature. Indeed, 
outside the necrotic core, the standard diffusion is combined with a drift term, differently from the pure diffusion process described in~\cite{MR1684873}.

\subsection{The growth rate of the tumor}

In this subsection, following the approach in~\cite{greenspan}, we employ the principle of conservation of mass to derive the formulation for the growth rate of the tumor. To do this, we require some assumptions on the tumor. To start with, we suppose that every cell inside the tumor region is identical
in volume and density, as well as incompressible. We also assume that cell division by mitosis is instantaneous relatively to the growth time of the tumor, and every newborn cell has the same volume of any other living cell. Moreover, we suppose that the cell proliferation rate within the tumor is described by a function of the cell concentration, denoted by $S(u)$.
In this spirit, we assume that the number of cells produced at time~$t$ in the tumor region equals
\begin{equation}\label{NEWBORN}
{\mathcal{P}}(t):=\int_{B_{s(t)}\setminus B_\varepsilon} S(u(x,t))\,dx=
4\pi\int^{\max\{s(t),\varepsilon\}}_\varepsilon S(u(r,t))r^2\,dr.
\end{equation}

We also disregard the death and disintegration of cells into simpler chemical compounds:
in practice,
the production of
cellular debris in different stages of disintegration
and the degeneration of dead cells inside the necrotic core may also cause the tumor to lose volume,
but we suppose that the time scale at which a significant amount of
cell degeneration takes place
is longer than the one considered here in relation to tumor growth.
All these assumptions have to be regarded as convenient simplifications:
more complex models can be taken into account, but we present here
the conceptually simplest scenario in order to extract general quantitative information.

Under the above assumptions, the total number~${\mathcal{N}}(t)$ of cells
(of any kind) in the tumor region~$B_{s(t)}\setminus B_\varepsilon$
is proportional to the volume of this region, which is~$\frac{4\pi}3\big(s^3(t)-\varepsilon^3\big)$. The proportionality constant
is given by the cell density, which we suppose here to be normalized to~$1$. Then, we write
\begin{equation}\label{NUMEROC}
{\mathcal{N}}(t)=\frac{4\pi}3\big(s^3(t)-\varepsilon^3\big).\end{equation}

The constitutive assumption that the tissue maintains a constant cell density is common in many mathematical models for tumors, see e.g.~\cite{WARD}. In these cases, the models assume that the sum of the densities of the tumor and healthy cells is equal to a constant, which is the maximum capacity tissue density. In these contexts, the 
nutrient enables cell proliferation, which does not change the local density but generates pressure: since the cells cannot increase the density, they move outward, and this collective movement pushes the tumor boundary outward.

In these models, a tumor mass is treated as a multiphase medium composed of tumor cells, which may also become necrotic, and healthy cells (with the presence of an interstitial fluid ensuring nutrients diffusion), see e.g.~\cite{SCIU}.

Since we suppose that the density of cells in the tumor region remains constant, the newborn cells produced according to~\eqref{NEWBORN} will cause the tumor region itself to grow.
Namely, by the incompressibility assumption, the total number of cells~${\mathcal{N}}(t+\tau)$
in the tumor region~$B_{s(t+\tau)}\setminus B_\varepsilon$ equals the 
number of cells~${\mathcal{N}}(t)$
in the tumor region~$B_{s(t)}\setminus B_\varepsilon$, plus the cells produced
in the interval of time~$[t,t+\tau]$. This gives that
$$ {\mathcal{N}}(t+\tau)={\mathcal{N}}(t)+\int_t^{t+\tau}{\mathcal{P}}(\theta)\,d\theta$$
and therefore
$$ \dot{\mathcal{N}}(t)={\mathcal{P}}(t).$$
{F}rom this, recalling~\eqref{NEWBORN} and~\eqref{NUMEROC}, we conclude that 
\begin{equation}\label{spuntofinale}
s^2(t)\dot s(t)=\int_\varepsilon^{\max\{s(t),\varepsilon\}}S(u(r,t))r^2\,dr.
\end{equation}
We point out that formula~\eqref{spuntofinale} acts as a free boundary condition for problem~\eqref{mainfinal1}.
It will also play a crucial role in our analysis when establishing the appropriate upper and lower bounds for $s(t)$ and its derivative (see the forthcoming Lemma~\ref{lemmastimespunto}).

In our model we assume a linear dependence of the proliferation rate~$S(u)$ to the cell concentration~$u$, which corresponds to the inhibitor free model in~\cite{altrisigma}.
In particular, we assume that
\begin{equation}\label{PROPORZIONALE}
S(u):=\mu \big(u(r,t)-\widetilde u\big),
\end{equation}
where $\mu$ and $\widetilde u$ are positive constants.
Here, $\widetilde u>0$ represents a critical value for the cell density under which there is no tumor proliferation. Moreover, $\mu u(r, t)$ is the birth rate and $\mu \widetilde u$ denotes the death rate due to apoptosis (that is the process for which, when cells divide and grow, apoptosis functions as a mechanism to regulate this growth by removing some of the cells that may be malfunctioning, damaged, or excessive). With this choice, \eqref{spuntofinale} becomes\footnote{As we will show below \label{ATTEFOO223er} in~\eqref{stimespunto2}, the setting in~\eqref{freeboundary} simplifies, since we will prove that $${\mbox{$s(t) > \varepsilon$ for all~$t \in [0,+\infty)$.}}$$ Consequently, \eqref{freeboundary} reduces to
$$s^2(t)\dot s(t)=\mu\int_\varepsilon^{s(t)}\big(u(r,t)-\widetilde u\big)r^2\,dr. $$
This point is nevertheless delicate. On the one hand, a tumor model such as the one considered in this paper
is biologically meaningful only when $s(t) > \varepsilon$, because otherwise the region 
containing the proliferating tumor cells would vanish, rendering the tumor evolution trivial. 
On the other hand, we \emph{do not to assume} that~$s(t) > \varepsilon$ for all~$t \in [0,+\infty)$, as this would introduce an unnecessary structural hypothesis into 
the model: in fact, a skeptical reader might even argue that, under additional assumptions of this kind, 
the solution of the full system could fail to exist!

Instead, we view it as a significant byproduct of our approach that the model does not rely 
on any artificial assumptions. Rather, all constraints required for biological 
consistency, such as $s(t) > \varepsilon$ for all~$t \in [0,+\infty)$, are derived as 
consequences of the fundamental structural properties of the model and rigorous 
mathematical analysis.

In this sense, our model is self-consistent and robust: the biological admissibility of 
the solution is not imposed externally, but follows intrinsically from the structure of the system, and biological admissibility is not enforced by assumption, but emerges naturally from the internal dynamics of the model.}
\begin{equation}\label{freeboundary}
s^2(t)\dot s(t)=\mu\int_\varepsilon^{\max\{s(t),\varepsilon\}}\big(u(r,t)-\widetilde u\big)r^2\,dr.
\end{equation}
We stress that, since the necrotic core is made of dead cells, it gives no contribution on proliferation. Thus, the growth rate of the tumor is only influenced by the region $\Omega(t)\setminus B_\varepsilon$, as one can see from formula~\eqref{freeboundary}.

Throughout the paper we will employ the particular version of formula~\eqref{spuntofinale} stated in~\eqref{freeboundary}.
Moreover, we assume that the critical
value for cell proliferation
lies below the cell concentration on the boundary of the tumor region (this models a case in which
proliferation provides a significant contribution
to tumor growth, as posited in the angiogenesis framework). Namely, we suppose that\footnote{In particular, condition~\eqref{relationutilde} models the case in which the necrotic core has density that is
smaller than that the boundary of the active tumor region.
This is coherent with the biological scenario, since,
in most tumors, the necrotic core, where cells disintegrate and tissue structure breaks down, exhibits a lower cell density than the outer proliferative region, which benefits from better oxygenation and nutrient supply, see e.g. the section on ``MCTs growing and structure'' in~\cite{CHALLE}.}
\begin{equation}\label{relationutilde}
0<\widetilde u < \underline{u} < \overline{u}.
\end{equation}

As shown in the forthcoming Proposition \ref{propabcsPI}, the (unique and radial) stationary solution of problem~\eqref{mainfinal1} satisfies \eqref{freeboundary} if and only if $\widetilde u>\dfrac{\Gamma u_B}{\lambda}$.
Accordingly, our analysis is carried out under the assumption
\begin{equation}\label{relationutilde2}
0<\frac{\Gamma u_B}{\lambda}<\widetilde u < \underline{u} < \overline{u}.
\end{equation}

In particular, we assume that the cell concentration is higher at the outer boundary of the tumor than in the necrotic core. This assumption is physiologically justified by the fact that nutrients diffuse from the surrounding environment toward the interior,
favoring tumor growth at the interface. Thus, cells located in the outer proliferative rim have direct access to higher nutrient levels, while moving inward these nutrients are progressively consumed, eventually dropping below the threshold required for cell survival and leading to necrosis. This behavior is well documented in the literature (see e.g. \cite{XU2016}), and strong supports our assumption \eqref{relationutilde2}.

To ease the comparison between the mathematical formalism and the medical application,
we provide in here below a table of Symbol Description.
\smallskip

\begin{center}
\begin{tabularx}{\textwidth}{ >{\centering\arraybackslash}m{1.5cm} X }
\toprule
\textbf{Symbol} & \textbf{Description} \\
\midrule
$u$ & The concentration of the tumor cells. \\
\midrule
$\Omega(t)$ & 
The region occupied by the tumor at time~$t$
(corresponding to the region in which the cell density is above the threshold~$\widetilde{u}$).\\
\midrule
$B_\varepsilon$ & The necrotic core (the boundary of which is supposed
to have a constant cell concentration~$\overline{u}>\underline{u}$). \\
\midrule
$c$ & The concentration of the chemoattractant. \\
\midrule
$\chi(c)$ & The chemotactic sensitivity
(taken here to be of the form~$\frac2{c}$). \\
\midrule
$\kappa$ & The reciprocal diffusion coefficient. \\
\midrule
$\lambda $  & The effective death rate of tumor cells. \\
\bottomrule
\end{tabularx}
\end{center}
\medskip

\subsection{Main results}
Our first result ensures that, in the model under consideration, the tumor, once formed, survives through time. This is formalized in the following result:

\begin{theorem}\label{th:main1}
Assume~\eqref{u_0positiva},
\eqref{mainfinal1:DUE}, and~\eqref{relationutilde2}.
Let~$(u(r,t), s(t))$ be a solution of~\eqref{mainfinal1} and~\eqref{freeboundary}.

Then, there exists $\delta_*>0$ depending only on\footnote{An explicit formulation for $\delta_*$ is given by \eqref{delta0esplicita}, \eqref{delta1esplicita} and \eqref{deltastar}.}
 $\kappa$, $\lambda$, $\widetilde u$, $\underline u$, $\overline u$, $\mu$, $\varepsilon$, $\Gamma$, $u_B$, and~$s(0)$ such that
\begin{equation}\label{deltastarclaim}
s(t)- \varepsilon\ge\delta_* \quad \mbox{for any }t\geq 0.
\end{equation}
\end{theorem}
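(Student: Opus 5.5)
The plan is a barrier argument near the necrotic core, combined with a continuity argument on $s(t)$. The key observation is that for $\chi_0=2$ the drift term cancels the first-order part of the radial Laplacian. For radial $u$ we have $\Delta u=u_{rr}+\frac{2}{r}u_r$ and $\frac{\chi_0}{r^2}\,x\cdot\nabla u=\frac{2}{r}u_r$. Hence \eqref{mainfinal1} reads
\[
\kappa u_t=u_{rr}-\lambda u+\Gamma u_B \qquad\text{for } \varepsilon<r<s(t),
\]
which is a one-dimensional equation with constant coefficients. This makes explicit sub-solutions easy to write down.

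\textbf{Step 1 (lower barrier near $r=\varepsilon$).} I would use the stationary function
\[
w(r):=\underline u-M(r-\varepsilon)+\tfrac{\lambda\overline u}{2}(r-\varepsilon)^2,
\]
and then $W:=\max\{w,\Gamma u_B/\lambda\}$. Wherever $w\le\overline u$ one has $w''-\lambda w+\Gamma u_B\ge \lambda\overline u-\lambda\overline u+\Gamma u_B\geq0$, so $w$ is a sub-solution there. The constant $\Gamma u_B/\lambda$ is an exact solution. The maximum of two sub-solutions is a sub-solution, and since $W\le\underline u\le\overline u$ on the relevant range, the computation above applies.

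I would choose $M$ large so that three things hold. First, $w$ decreases below $\Gamma u_B/\lambda$ before it can come back up, which means $W\le \underline u<\overline u$ wherever it matters. Second, $W(s(t))\le\overline u=u(s(t),t)$. Third, $W\le u_0$ on $[\varepsilon,s(0)]$. The last requirement uses $u_0\ge\Gamma u_B/\lambda$ from \eqref{u_0positiva}, $u_0(\varepsilon)=\underline u$, and a bound on $u_0'$. At $r=\varepsilon$ we have $W=\underline u=u$.

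The comparison principle on the moving domain $\{\varepsilon<r<s(t)\}$ then yields $u\ge W\ge w$ for all $t\ge0$. I would also use Lemma~\ref{lemma0<u<utilde}, which gives $u\ge\Gamma u_B/\lambda$ throughout.

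\textbf{Step 2 (positivity of $\dot s$ for thin tumors).} By \eqref{relationutilde2}, $\underline u>\widetilde u$. I would pick $\delta_1>0$ with $M\delta_1<\underline u-\widetilde u$. Then for $\varepsilon\le r\le\varepsilon+\delta_1$ the barrier gives $u(r,t)\ge \underline u-M(r-\varepsilon)>\widetilde u$.

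Consequently, whenever $\varepsilon<s(t)\le\varepsilon+\delta_1$, the integrand in \eqref{freeboundary} is positive on the whole interval $(\varepsilon,s(t))$, and so $\dot s(t)>0$.

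\textbf{Step 3 (continuity argument).} I would set $\delta_*:=\min\{s(0)-\varepsilon,\delta_1\}$ and argue by contradiction. Suppose $s(t_1)<\varepsilon+\delta_*$ for some $t_1$. Let $t_0$ be the last time before $t_1$ at which $s=\varepsilon+\delta_*$. On $(t_0,t_1]$ we have $\varepsilon<s\le\varepsilon+\delta_1$, at least up to the first time $s$ could reach $\varepsilon$. By Step 2, $\dot s>0$ there, so $s(t_1)>s(t_0)=\varepsilon+\delta_*$, which is a contradiction. This also rules out $s$ ever reaching $\varepsilon$, because the $\max$ in \eqref{freeboundary} is never active before that time. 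This proves \eqref{deltastarclaim}.

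\textbf{Main obstacle.} The delicate point is Step 1. One must make sure the barrier stays admissible on the moving boundary $r=s(t)$ for all times, and in particular when $s(t)$ becomes large, where $w$ grows quadratically. This is why I truncate with the constant $\Gamma u_B/\lambda$ and use that $\overline u>\underline u$.

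One must also fix $M$ from the data. The initial compatibility $W\le u_0$ a priori involves $u_0'(\varepsilon)$. If one insists that $\delta_*$ depend only on the listed constants, the first thing I would try is an exponential barrier of the form $\frac{\Gamma u_B}{\lambda}+\big(\underline u-\frac{\Gamma u_B}{\lambda}\big)\,\frac{\sinh(\sqrt\lambda\,(\varepsilon+a-r))}{\sinh(\sqrt\lambda\,a)}$ on $[\varepsilon,\varepsilon+a]$, with $a\le s(0)-\varepsilon$. Failing that, I would accept a dependence on $\|u_0'\|_\infty$.
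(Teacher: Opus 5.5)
Your Steps 1--3 do prove a lower bound, but not the one stated. Two small repairs are needed first. Define $W$ as $w$ only up to the first point $r_1$ where $w=\frac{\Gamma u_B}{\lambda}$, and as that constant beyond $r_1$, rather than as a global maximum. Also assume compatibility $u_0(\varepsilon)=\underline u$.

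The real problem is the initial comparison. To get $W\le u_0$ at $t=0$ you need something like $M\ge 2\|u_0'\|_\infty$, so $\delta_1<(\underline u-\widetilde u)/M$ depends on $u_0$. The theorem asserts that $\delta_*$ depends only on $\kappa,\lambda,\widetilde u,\underline u,\overline u,\mu,\varepsilon,\Gamma,u_B$ and $s(0)$. This independence matters: the constant $C_K$ of Proposition~\ref{propCK>0} is used in Theorem~\ref{main3} to state the hypotheses on $u_0$, so it must be fixed before $u_0$ is chosen. What you have written is essentially the paper's Proposition~\ref{propeta}, with a quadratic barrier in place of the exponential $\psi_\alpha$. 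Its constant $\eta$ depends on $u_0$ but not on $\kappa$, which is the opposite trade-off from the theorem.

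Your fallback cannot close this gap, and neither can any time-independent barrier. If $W$ is continuous with $W(\varepsilon)=\underline u$, then $W>\frac12\big(\underline u+\frac{\Gamma u_B}{\lambda}\big)$ on some $[\varepsilon,\varepsilon+\rho]$. But there are admissible data ($C^1$, values in $[\frac{\Gamma u_B}{\lambda},\overline u]$, $u_0(\varepsilon)=\underline u$, $u_0(s(0))=\overline u$) that drop to $\frac{\Gamma u_B}{\lambda}$ inside $(\varepsilon,\varepsilon+\rho)$. Put differently, your Step 2 asserts $u>\widetilde u$ on a fixed $[\varepsilon,\varepsilon+\delta_1]$ for \emph{all} $t\ge 0$. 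That already fails at $t=0$ for such data unless $\delta_1$ depends on $u_0$.

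The missing idea is to let time do the work. Suppose $s(t_1)=\varepsilon+2\delta_1$, $s(t_2)=\varepsilon+\delta_1$, and $s-\varepsilon\in(\delta_1,2\delta_1)$ in between. Lemma~\ref{lemmastimespunto} then gives $t_2-t_1\ge 1/\big(2\mu(\widetilde u-\frac{\Gamma u_B}{\lambda})\big)$. On $(t_1,t_2)$ the paper (Proposition~\ref{propstep2}) compares $u$ with
\[
b(t)\,\frac{\sinh\big(\widetilde M(r-\varepsilon)\big)}{\sinh\big(\widetilde M(s(t)-\varepsilon)\big)}+a(t)\,\frac{\sinh\big(\widetilde M(s(t)-r)\big)}{\sinh\big(\widetilde M(s(t)-\varepsilon)\big)},
\]
where $a(t)$ and $b(t)$ grow exponentially from $\frac{\Gamma u_B}{\lambda}$ at $t_1$ to $\underline u$ and $\overline u$ at $t_2$. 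This works for the following reasons:
\begin{itemize}
\item The comparison at $t_1$ needs only $u\ge\frac{\Gamma u_B}{\lambda}$, which holds for every admissible $u_0$ by Lemma~\ref{lemma0<u<utilde}.
\item The minimal elapsed time bounds the exponential growth rates.
\item These rates are absorbed by choosing $\widetilde M$, and hence $\delta_1$, in terms of $\kappa$ and the parameters only.
\end{itemize}
At $t_2$ this yields $u(\cdot,t_2)>\widetilde u$ on $(\varepsilon,s(t_2))$, hence $\dot s(t_2)>0$, a contradiction. This is exactly how $\delta_*$ becomes independent of $u_0$, at the price of depending on $\kappa$.
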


{ For the analysis that follows, we need to study the dependence of $\delta_*$ in Theorem~\ref{th:main1}
on the coefficient $\kappa$. To this end, we will treat $\delta_*$ as a function of $\kappa$ and we will use the notation $\delta_*(\kappa)$.


However, for our analysis it becomes important to retain some quantitative control on $\delta_*$ in dependence of
the reciprocal diffusion coefficient~$\kappa$. More precisely, we need to establish a positive lower bound, at least when $\kappa$ is restricted to compact subsets. The next result provides exactly this estimate:
\begin{proposition}\label{propCK>0}
Let the assumptions of Theorem \ref{th:main1} hold true
and let $\delta_*=\delta_*(\kappa)$ be as in Theorem \ref{th:main1}. 

Then, for any compact set $K\subset (0, +\infty)$, there exists a constant $C_K>0$ depending only on $K$, $\lambda$, $\widetilde u$, $\underline u$, $\overline u$, $\mu$, $\varepsilon$, $\Gamma$, $u_B$, and~$ s(0)$, such that
\[
s(t)-\varepsilon\ge\delta_*(\kappa)\ge C_K>0\quad\mbox{ for any } t\ge 0,
\]as long as~$\kappa\in K$.
\end{proposition}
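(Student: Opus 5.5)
The plan is to go back to the explicit construction of $\delta_*$ in the proof of Theorem~\ref{th:main1}, namely the formulas \eqref{delta0esplicita}, \eqref{delta1esplicita} and \eqref{deltastar}, and regard $\delta_*$ as a function of $\kappa\in(0,+\infty)$ with all other parameters ($\lambda$, $\widetilde u$, $\underline u$, $\overline u$, $\mu$, $\varepsilon$, $\Gamma$, $u_B$, $s(0)$) frozen. The first inequality $s(t)-\varepsilon\ge\delta_*(\kappa)$ is exactly \eqref{deltastarclaim}, so the only task is the uniform lower bound $\delta_*(\kappa)\ge C_K>0$ for $\kappa\in K$.

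I expect $\delta_*$ to be built from a few elementary ingredients. The first is a barrier threshold coming from the stationary solution of Proposition~\ref{propsoluzionestazionaria}; it is independent of $\kappa$, since the stationary equation does not involve $u_t$. The second is a time scale on which the solution stays above the barrier, or on which the free boundary cannot move too much. This comes from Lemma~\ref{lemmastimespunto}, where $|\dot s|$ is bounded using \eqref{freeboundary} together with the bounds $\frac{\Gamma u_B}{\lambda}\le u\le\overline u$ of Lemma~\ref{lemma0<u<utilde}. The third is the quantities in \eqref{delta0esplicita}, \eqref{delta1esplicita}, which should then be expressed as minima, products or quotients of such terms, with $\kappa$ entering through factors like $\kappa$, $1/\kappa$, or $e^{-c/\kappa}$ (for instance from a time rescaling $t\mapsto t/\kappa$ in the parabolic comparison).

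Key step: show that $\kappa\mapsto\delta_*(\kappa)$ is continuous and strictly positive on $(0,+\infty)$. Each formula is a finite composition of continuous functions of $\kappa$ that are positive for every positive $\kappa$, including minima of positive continuous functions. By compactness of $K$, the minimum $C_K:=\min_{\kappa\in K}\delta_*(\kappa)$ is then attained and positive. It depends only on $K$ and the listed parameters, because $\delta_*$ does.

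The main obstacle is making sure that no hidden dependence on $\kappa$ breaks continuity or positivity. An example would be a constant chosen by a non-constructive "there exists" step, or a degeneration as $\kappa$ varies. If some ingredient is only known to exist rather than given by a formula, I would first try to replace it by a monotone explicit bound in $\kappa$. For example, terms increasing in $\kappa$ can be bounded below at $\min K$, and terms decreasing in $\kappa$ can be bounded below at $\max K$. This gives $C_K$ directly as $\delta_*$ evaluated with $\kappa$ replaced by the worse endpoint in each factor, avoiding continuity altogether.
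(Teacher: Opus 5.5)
Your proposal is correct and follows essentially the same route as the paper. The paper reads off from \eqref{delta0esplicita}, \eqref{delta1esplicita} and \eqref{deltastar} that $\kappa\mapsto\delta_*(\kappa)$ is continuous and strictly positive on $(0,+\infty)$, and then sets $C_K:=\min_{\kappa\in K}\delta_*(\kappa)$ by the extreme value theorem. Your guesses about how $\kappa$ enters are inaccurate but not load-bearing: it appears only through $M$, $\widetilde M$ and polynomial factors of $\kappa$, with $R_0$ and the constant $C$ independent of $\kappa$, and the endpoint-monotonicity fallback is not needed.
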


We point out that Proposition \ref{propCK>0} will play a crucial role in several intermediate results (see e.g. Lemma \ref{lpèkoijhugyvftcdrxsedctfvygbhjnkm}) leading to the proof of our main results.  This constitutes a significant difference with respect to Friedman’s works (see e.g. \cite{MR1684873}), and it is due to the fact that our model is more complex and also features the presence of a necrotic core. 

Furthermore, we point out that the scaling law of $\delta_*(\kappa)$, as derived from the forthcoming formulas \eqref{delta0esplicita},  \eqref{delta1esplicita} and \eqref{deltastar}, suggests that the viable rim thickness $\delta_*(\kappa)$ can be interpreted as a diffusion-driven length scale, providing a natural framework for comparison with the experimental and numerical evidence reported below.

The three-layer structure introduced by \cite{greenspan} and already presented in Section \ref{INT1.1} arises from the fact that nutrient supply is diffusion-limited. In this context, one often focuses in particular on oxygen diffusion, since oxygen is typically the limiting nutrient for cell survival and proliferation. Accordingly, the cell diffusion coefficient in our model can be considered as possibly related to the effective diffusion coefficient of oxygen. 

Oxygen and other essential nutrients can reach only up to a finite distance from the outer boundary before they become depleted by cellular consumption. As it is known, oxygen diffusion typically occurs over distances of about $100–200\mu m$ in biological tissues, which determines the characteristic thickness of the viable zone near the tumor boundary. Experimental measurements in solid tumors report oxygen diffusion distances in the range of $107-193\mu m$ in mouse tumor ``cubes" prepared from solid tumors, depending on the tumor type, the oxygen consumption rate, and the external conditions, and corresponding to the thickness of the oxygenated rim surrounding hypoxic regions (see \cite{Peggyetal}). Consistently, studies on human breast cancer xenografts provide typical estimates within this range, showing that hypoxia and radioresistance arise when intercapillary distances exceed approximately $100-120\mu m$, and are further aggravated by factors such as reduced microvascular blood flow, decreased red blood cell flux, or anemia (see \cite{GroebeVaupel}).

As a consequence, the distance between the tumor boundary and the onset of necrosis is expected to be of this order, suggesting that\footnote{We recall that, for the sake of simplicity, here the radius of the necrotic core is assumed to be constant in time, so that $\varepsilon(t)=\varepsilon$.}
\[
s(t)-\varepsilon(t)\sim 100/200 \mu m.
\]
This conclusion is further supported by numerical computations. For instance, in \cite{SCIU}, it is observed that once a necrotic core is established, there is a shell of viable tumor cells surrounding the tumor, whose thickness remains approximately constant over time.  More precisely, in Section 5.1 there, a shell thickness of approximately $150 \mu m$ is reported based on numerical computations (see \cite[Figure 4(d)]{SCIU}, where the radii of the spheroid and of the necrotic core are displayed). It is also remarked there that this thickness depends on the cell line and on nutrient availability.}

We now provide our second main result. 

We prove that, under suitable assumptions on the cell distribution and the model parameters, the tumor radius is either uniformly bounded for all $t>0$ or may experience extremely rapid contractions. More precisely, we let $\kappa$ range over arbitrary compact subsets of $\left(0, \dfrac{\lambda}{\mu(\overline u-\widetilde u)}\right)$. Within this framework, we assume that, in a left neighborhood of $s(0)$,  the initial cell concentration profile $u_0(r)$ stays above the value attained at the boundary of the necrotic core, and that is of class $C^1 ([\varepsilon, s(0)])$, with $u'_0(r)$ bounded by a positive constant depending only on $\lambda, \overline u, \Gamma, u_B, C_K$. 
Under these conditions, we prove that either the speed of the tumor boundary can become arbitrarily negative, meaning that the boundary may experience episodes of fast contraction, or the tumor radius remains uniformly bounded for all $t>0$. From a biological perspective, the latter represents an expected outcome for a diffusion driven model, while the former describes an extreme regime in which boundary retreat occurs at unbounded rates.

The result that we obtain reads as follows:
\begin{theorem}\label{main3}
 Assume that~\eqref{mainfinal1:DUE} and \eqref{relationutilde2} hold true.

Let $K\subset \left(0, \dfrac{\lambda}{\mu(\overline u-\widetilde u)}\right)$ be compact and let $C_K>0$ be as in Proposition \ref{propCK>0}. Assume that~$\kappa\in K$, and that $u_0\in C^1([\varepsilon, s(0)])$ is such that
\begin{equation}\label{kolmjopjnhbgyugfcdrtesxzwea}
u_0(r)\ge\underline u\quad\mbox{ for any } r\in [\varepsilon, s(0)-C_K],
\end{equation}
and satisfies
\begin{equation}\label{okpijhugyftdruyjmlò,.àxqBHJKNGVCUhbjnaekfml}
\|u'_0\|_{L^\infty([s(0)-C_K, s(0)])}<\dfrac{\lambda\overline u-\Gamma u_B}{\lambda C_K}.
\end{equation}

Let $(u(r,t), s(t))$ be a solution of~\eqref{mainfinal1} and \eqref{freeboundary}.

Then,  either
\begin{equation}\label{either1}
\inf_{t>0} \dot{s}(t)=-\infty
\end{equation}
or
\begin{equation}\label{s(t)bounded}
\mbox{there exists a constant $C>0$ such that } \sup_{t>0}s(t)\le C.
\end{equation}
\end{theorem}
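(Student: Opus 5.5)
We argue by contradiction: assume that \eqref{either1} fails and that \eqref{s(t)bounded} fails. Then there is $M>0$ with $\dot s(t)\ge -M$ for all $t>0$, and $\limsup_{t\to+\infty}s(t)=+\infty$. The goal is to show that, for $\kappa\in K$, the free boundary condition \eqref{freeboundary} forces $s$ to stay bounded. This will come from comparing $u$ with explicit barriers: in the rim of thickness $C_K$ below $s(t)$, $u$ stays above a linear profile, while further in $u$ stays above $\underline u$, and $u\le\overline u$ everywhere.

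\textbf{Step 1 (upper bound on the growth).} By the maximum principle (Lemma~\ref{lemma0<u<utilde}), $\frac{\Gamma u_B}{\lambda}\le u\le \overline u$. Hence \eqref{freeboundary} gives
\[
s^2\dot s\le \mu(\overline u-\widetilde u)\frac{s^3-\varepsilon^3}{3},
\]
that is, $\dot s\le \frac{\mu(\overline u-\widetilde u)}{3}s$. This is an exponential upper bound. Since $\kappa<\frac{\lambda}{\mu(\overline u-\widetilde u)}$, the boundary speed relative to $\kappa$ is controlled: the term $\kappa\,\dot s\,\partial_r$, which appears when we move to coordinates attached to the moving boundary, is dominated by the absorption $\lambda$.

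\textbf{Step 2 (barrier near the free boundary).} I work in the moving variable $\rho=s(t)-r$. On the strip $\rho\in[0,C_K]$, which lies in the active region by Proposition~\ref{propCK>0}, I build a subsolution of the form
\[
w(r,t)=\overline u-L\big(s(t)-r\big),\qquad L=\frac{\lambda\overline u-\Gamma u_B}{\lambda C_K}.
\]
At $\rho=C_K$ this gives $w=\Gamma u_B/\lambda$, and at $\rho=0$ it gives $w=\overline u$. A direct computation gives
\[
\kappa w_t-\Delta w+\frac{2}{r}w_r+\lambda w-\Gamma u_B=-\kappa L\dot s+\lambda w-\Gamma u_B+\Big(\frac2r-\frac2r\Big)L.
\]
This needs sign control via the lower bound $\dot s\ge -M$ and the bound from Step 1. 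Assumption \eqref{okpijhugyftdruyjmlò,.àxqBHJKNGVCUhbjnaekfml} ensures $u_0\ge w(\cdot,0)$ on the initial strip. The comparison principle then gives $u\ge w$ on the strip for all times. On the inner part, $\underline u$ (using \eqref{kolmjopjnhbgyugfcdrtesxzwea} and $\underline u>\Gamma u_B/\lambda$ under the equation) serves as a lower barrier between $\varepsilon$ and $s-C_K$.

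\textbf{Step 3 (conclusion).} Insert these lower bounds, together with $u\le\overline u$, into \eqref{freeboundary}. On the inner region $u-\widetilde u\ge \underline u-\widetilde u>0$. The strip contributes at most $O(C_K s^2)$ in absolute value, so for large $s$ we get $\dot s\ge c\,s-C$ with $c>0$. This contradicts nothing by itself. The actual contradiction must come from the opposite direction: the same barrier argument applied with supersolutions near the boundary, using $\kappa\mu(\overline u-\widetilde u)<\lambda$, should force $u-\widetilde u<0$ on a layer whose volume grows like $s^2\cdot(\text{thickness})$. For bounded $s$ this layer need not dominate, but as $s\to\infty$ it does, and it drives $\dot s$ negative at large radii, so $s$ cannot reach arbitrarily large values.

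I expect the main obstacle to be Step 2 and its counterpart in Step 3. There, the sign of the moving-frame term $-\kappa L\dot s$ depends on whether the boundary retreats quickly. This is exactly why the dichotomy includes $\inf\dot s=-\infty$: a uniform lower bound $\dot s\ge-M$ is what lets the barrier close. I would first try to choose the barrier slope in terms of $M$ and $\kappa\in K$, and then verify the inequality using only $\kappa<\lambda/(\mu(\overline u-\widetilde u))$.
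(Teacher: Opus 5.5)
Your proposal has a genuine gap. The barriers in Step 2 are not subsolutions, and Step 3, which is the step that should produce the contradiction, is only a conjecture.

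\textbf{Step 2.} With $\chi_0=2$ the drift cancels the $\frac2r\partial_r$ part of the Laplacian. For $w=\overline u-L(s(t)-r)$ this gives
\[
\kappa w_t-\partial_r^2 w+\lambda w-\Gamma u_B=-\kappa L\dot s(t)+\lambda\Big(w-\frac{\Gamma u_B}{\lambda}\Big).
\]
At $r=s(t)$ this is $\le 0$ only if $\dot s(t)\ge \lambda C_K/\kappa>0$. The lower bound $\dot s\ge -S$ (your $-M$) is no help here: a linear profile has no curvature to absorb the positive term $\lambda(w-\Gamma u_B/\lambda)$. The constant $\underline u$ is likewise a strict \emph{super}solution, since $\lambda\underline u-\Gamma u_B>0$.

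The paper's barrier near the free boundary is convex, namely $\phi=\frac{\Gamma u_B}{\lambda}+\big(\overline u-\frac{\Gamma u_B}{\lambda}\big)e^{-\beta(s(t)-r)}$ with $\beta^2\ge\kappa S\beta+\lambda$. It is not used to bound $u$ from below inside \eqref{freeboundary}. It bounds the flux: $\partial_r u(s(t),t)\le\beta(\overline u-\Gamma u_B/\lambda)$ (Lemma~\ref{barrieradasopra}). This is exactly where the following hypotheses enter:
\begin{itemize}
\item $\dot s\ge -S$;
\item $u_0\ge\underline u$ on $[\varepsilon,s(0)-C_K]$;
\item the bound on $u_0'$;
\item $s-\varepsilon\ge C_K$.
\end{itemize}
Similarly, Lemma~\ref{barrieradasotto} bounds $\partial_r u(\varepsilon,t)$ from below.

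\textbf{Step 3 (the decisive gap).} As you note, lower bounds on $u$ only push $\dot s$ up. Your fallback, pointwise supersolutions forcing $u<\widetilde u$ on a layer, is unsupported. Step 1 only gives $\kappa\dot s\le\kappa\mu(\overline u-\widetilde u)s/3$, which is unbounded, so the moving-frame drift is not dominated by $\lambda$. The hypothesis on $\kappa$ compares $\kappa\dot s/s$ with $\lambda$, not $\kappa\dot s$ with a fixed decay rate. Heuristically, the boundary layer then has width of order $\kappa\dot s/\lambda$, which is comparable to $s$.

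The missing idea is a global mass balance. Multiply the equation by $r^2$, integrate over $\{\varepsilon<r<s(\tau),\,0<\tau<t\}$, and use \eqref{freeboundary} to rewrite $\int r^2u\,dr$ in terms of $h=\frac13(s^3-\varepsilon^3)$. This gives
\[
\begin{split}
\frac{\kappa}{\mu}\dot h(t)={}&\int_0^t\Big(s^2\partial_ru(s,\tau)-\varepsilon^2\partial_ru(\varepsilon,\tau)-2\overline u\,s+2\varepsilon\underline u+2\int_\varepsilon^{s(\tau)}u\,dr\Big)\,d\tau\\
&+\Big(\kappa(\overline u-\widetilde u)-\frac{\lambda}{\mu}\Big)h(t)-(\lambda\widetilde u-\Gamma u_B)\int_0^t h\,d\tau-\overline\gamma,
\end{split}
\]
where $\overline\gamma$ is a constant fixed by the data at $t=0$.

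With the two flux bounds and $u\le\overline u$, the integrand is bounded above by $C s^2\le 3C\,h/(s-\varepsilon)$. Once $h$ is large, this is negligible against the absorption term $(\lambda\widetilde u-\Gamma u_B)\int h$.

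Now take $t_1<t_2$ with $h(t_1)=H$, $h(t_2)=H^2$ and $H<h<H^2$ in between. Then
\[
\frac{\kappa}{\mu}\big(\dot h(t_2)-\dot h(t_1)\big)<\Big(\kappa(\overline u-\widetilde u)-\frac{\lambda}{\mu}\Big)(H^2-H),
\]
and the coefficient is negative precisely because $\kappa<\lambda/(\mu(\overline u-\widetilde u))$. On the other hand, $\dot h(t_2)\ge0$ together with \eqref{stimespunto} bounds the left-hand side below by $-3\kappa(\overline u-\widetilde u)H$. This is a contradiction for $H$ large.

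The paper also records the converse: \eqref{either1} excludes \eqref{s(t)bounded}, which follows immediately from the lower bound in \eqref{stimespunto}.
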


It would be interesting to investigate whether both of the scenarios in~\eqref{either1} and~\eqref{s(t)bounded} can actually occur. In any case, the alternative described in~\eqref{either1} and \eqref{s(t)bounded} is biologically interesting, because it rules out the possibility that the tumor mass grows monotonically and indefinitely.

We now turn to the next result, which describes the behavior of the tumor in the regime of large $\kappa$. We emphasize that, in order to establish this result, we cannot rely on the lower bound provided by Theorem \ref{th:main1}, since its dependence on $\kappa$ prevents a meaningful formulation in this setting.

However, by using a barrier argument, we are able to derive an alternative lower bound for the quantity $s(t)-\varepsilon$. 
More precisely, we prove that there exists a positive constant $\widetilde \eta$ such that, if $\eta<\min\{\widetilde \eta,s(0)-\varepsilon\}$, then $s(t)-\varepsilon\ge \eta$ holds true for any $t\ge 0$.
Therefore, roughly
speaking, while some arbitrariness is allowed in the choice of $\eta$ here above, the results obtained
will be stronger if one picks $\eta$ “as large as possible” but still strictly smaller than $\min\{\widetilde \eta,s(0)-\varepsilon\}$.

Our result reads as follows:
\begin{proposition}\label{propeta}
Assume that 
\begin{equation}\label{oiuygtfdoiuy2}
u_0(r)>\frac{\Gamma u_B}{\lambda} \quad \mbox{for any } r\in [\varepsilon, s(0)].
\end{equation}
Let $(u(r,t), s(t))$ be a solution of~\eqref{mainfinal1} and~\eqref{freeboundary}.

Then, there exists~$\eta\in(0,\,s(0)-\varepsilon]$ depending on~$\varepsilon$, $s(0)$, $u_0$, $\lambda$, $\underline u$, $\widetilde u$, $\Gamma$ and $u_B$ such that, for any $t\ge0$, 
\[
s(t)-\varepsilon\ge \eta.
\]
\end{proposition}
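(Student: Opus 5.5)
The plan is a stationary barrier from below for $u$ that works for every $\kappa$, followed by a first-touching-time argument on the free boundary condition~\eqref{freeboundary}. The key observation is that, under~\eqref{mainfinal1:DUE}, for radial functions the operator in~\eqref{mainfinal1} reads
\[
\Delta w-\frac{2}{r}\,x\cdot\nabla w=w''+\frac{2}{r}w'-2\,\frac{r}{r}\cdot\frac{w'}{r}\cdot r\Big/r = w'' ,
\]
that is, the first-order terms cancel exactly. Stationary radial subsolutions therefore only need to satisfy $w''-\lambda w+\Gamma u_B\ge 0$. Since they are time independent, the factor $\kappa$ in front of $u_t$ plays no role, and the bound obtained will not depend on $\kappa$.

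\emph{Step 1: the barrier.} I take
\[
w(r):=\max\Big\{\frac{\Gamma u_B}{\lambda},\ \underline u-M(r-\varepsilon)\Big\}.
\]
The affine part $\varphi(r)=\underline u-M(r-\varepsilon)$ has $\varphi''=0$. Hence $\varphi''-\lambda\varphi+\Gamma u_B=\Gamma u_B-\lambda\varphi\ge0$ exactly where $\varphi\le \Gamma u_B/\lambda$, which is the wrong region. So I do not verify each piece separately; I check the maximum directly:
\begin{itemize}
\item where $\varphi\ge\Gamma u_B/\lambda$, I need $0-\lambda\varphi+\Gamma u_B\ge0$;
\item where the constant $\Gamma u_B/\lambda$ is active, it is an exact solution;
\item at the kink, $w$ is a maximum of two functions, which is the correct direction for a subsolution in the viscosity/comparison sense.
\end{itemize}
The first item fails in the interior unless $\varphi\le\Gamma u_B/\lambda$. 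To fix this I replace the affine profile by a convex one. A natural choice is $\varphi(r)=\frac{\Gamma u_B}{\lambda}+\big(\underline u-\frac{\Gamma u_B}{\lambda}\big)e^{-\sqrt{\lambda}\,\theta(r-\varepsilon)}$ with $\theta\ge1$. It satisfies
\[
\varphi''-\lambda\varphi+\Gamma u_B=\lambda(\theta^2-1)\Big(\underline u-\frac{\Gamma u_B}{\lambda}\Big)e^{-\sqrt\lambda\,\theta(r-\varepsilon)}\ge0,
\]
so $\varphi$ is a genuine subsolution, decreasing from $\underline u$ toward $\Gamma u_B/\lambda$. In this case no maximum is even needed.

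\emph{Step 2: comparison.} I choose $\theta$ large, depending on $u_0$, so that $\varphi\le u_0$ on $[\varepsilon,s(0)]$. This is possible because $u_0$ is continuous, $u_0(\varepsilon)=\underline u=\varphi(\varepsilon)$ and $u_0>\Gamma u_B/\lambda$ by~\eqref{oiuygtfdoiuy2}. The point $r=\varepsilon$, where the two functions coincide, needs care. I handle it either by using the $C^1$ regularity of $u_0$ (so that a large $\theta$ makes $\varphi'(\varepsilon)<u_0'(\varepsilon)$), or by slightly lowering the constant $\underline u$ in $\varphi$ to some $\underline u'\in(\widetilde u,\underline u)$; the second option is cleaner.

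On the lateral boundaries I have $u=\underline u\ge\varphi$ on $\partial B_\varepsilon$ and $u=\overline u>\underline u\ge\varphi$ on $\partial B_{s(t)}$. The maximum principle in the noncylindrical domain $\{\varepsilon<r<s(t)\}$ then gives $u\ge\varphi$ for all times, for every $\kappa$.

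\emph{Step 3: choosing $\eta$ and closing.} I define $\eta$ by $\varphi>\widetilde u$ on $[\varepsilon,\varepsilon+\eta]$. This is possible since $\varphi(\varepsilon)=\underline u'>\widetilde u>\Gamma u_B/\lambda$ by~\eqref{relationutilde2}. I also impose $\eta\le s(0)-\varepsilon$ (shrinking slightly to get strict inequality if $s(0)-\varepsilon$ is the binding constraint). This $\eta$ depends only on the listed quantities.

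Suppose $s(t)-\varepsilon<\eta$ for some $t$. By continuity of $s$ there is a first time $t_0$ with $s(t_0)=\varepsilon+\eta$, and I may take $\dot s(t_0)\le0$. On the other hand,~\eqref{freeboundary} together with $u\ge\varphi>\widetilde u$ on $(\varepsilon,s(t_0))$ gives $s^2\dot s(t_0)>0$. This is a contradiction, and it also rules out $s$ dropping below $\varepsilon$.

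I expect the main obstacle to be the justification of the comparison principle on the moving domain, including the corner at $(r,t)=(\varepsilon,0)$. I would treat it by comparing with $\varphi-\delta$ and letting $\delta\to0$.
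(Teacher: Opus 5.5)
Your proposal is correct and follows the paper's route: the same stationary exponential subsolution (your $\lambda\theta^2$ is the paper's $\lambda+\alpha$ in $\psi_\alpha$), comparison on the moving domain independently of $\kappa$, and a first-touching-time contradiction in the free boundary law~\eqref{freeboundary}. The differences are minor. Lowering the boundary value to $\underline u'\in(\widetilde u,\underline u)$ gives $\varphi\le u_0$ by a plain continuity and compactness argument, replacing the paper's tangent-line-plus-convexity construction with $m'=\min u_0'$. Choosing $\eta$ by pointwise positivity of $\varphi-\widetilde u$, rather than by positivity of $\int_\varepsilon^{\varepsilon+\eta}(\psi_\alpha-\widetilde u)r^2\,dr$, yields a smaller but equally valid $\eta$.
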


We assume that the initial concentration of cells has a positive weighted integral (as quantified in \eqref{ctfyivgoubhjklmò,2}) and remains below a suitable map, which provides a monotone interpolation between the inner and outer boundary concentrations, as required in \eqref{lpko,mijnhugyftcdrxctfvl,ò}.
Under these assumptions, we prove that there exists a threshold $\kappa_\star$ (given explicitly in~\eqref{kopfweoiojiwavnjbhin}) such that, whenever $\kappa$ exceeds this threshold, the tumor radius grows exponentially fast in time. 

Moreover, for the reader’s convenience,  we provide in Appendix \ref{apprdfctbhjkl–òlkjhbgvfdrdxcfvgbhnjmk,} some examples of functions $u_0(r)$ satisfying both assumption \eqref{ctfyivgoubhjklmò,2} and \eqref{lpko,mijnhugyftcdrxctfvl,ò}.

\begin{figure}[h]
\begin{center}
\includegraphics[scale=.48]{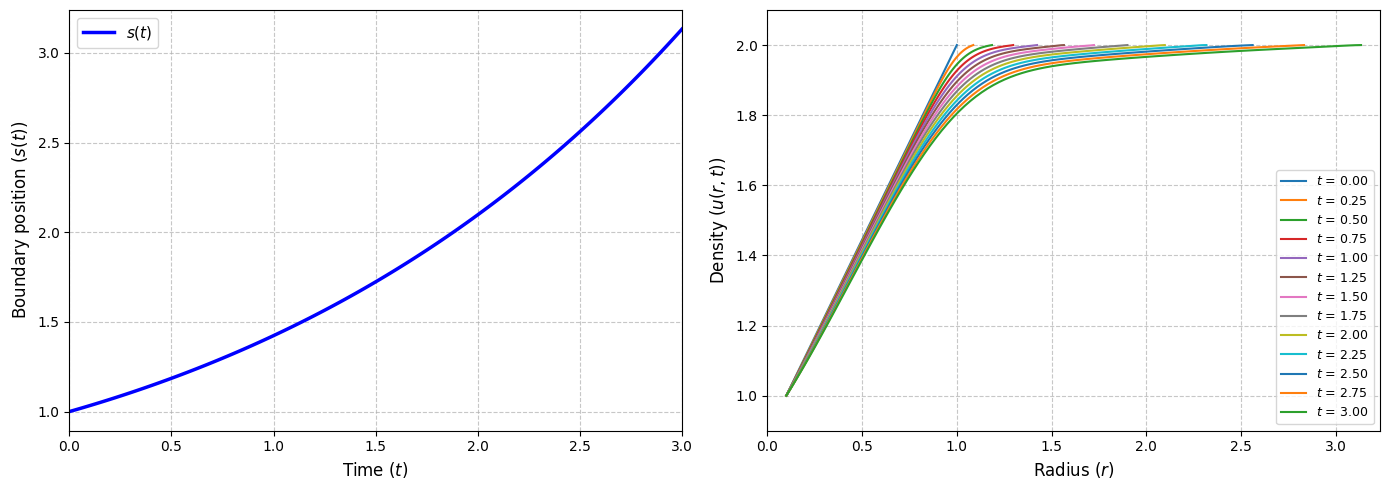}
\end{center}
\caption{Numerical plots of~$s(t)$ (left) and~$u(r,t)$ (right),
with~$u_0(r):= \underline u +\dfrac{\overline u -\underline u}{s(0)-\varepsilon}(r-\varepsilon)$ and the 
illustrative parameters~$
\Gamma = 1$, $u_B = 1$, $
\lambda = 2$, $\varepsilon = 0.1$,
$s(0) = 1$,
$\tilde u = 0.75$,
$\underline u = 1$, $\overline u = 2$,
$\mu = 1$, $\kappa = 100$.}
\label{PLOTSANDU}
\end{figure}

In particular, the following holds:
\begin{theorem}\label{main2} Assume~\eqref{mainfinal1:DUE},
\eqref{relationutilde2}, and~\eqref{oiuygtfdoiuy2}.

Suppose that $u_0\in C^1([\varepsilon, s(0)])$ is such that $u_0(r)>\frac{\Gamma u_B}{\lambda}$ for any $r\in(\varepsilon, s(0))$ and that it satisfies
\begin{equation}\label{ctfyivgoubhjklmò,2}
\int_\varepsilon^{s(0)} (2 u_0(r)-\overline{u} -\widetilde u) r^2\, dr>0
\end{equation}
and
\begin{equation}\label{lpko,mijnhugyftcdrxctfvl,ò}
 u_0(r) \le\underline u +\dfrac{\overline u-\underline u}{1-e^{-\sqrt\lambda (s(0)-\varepsilon)}}\left(1-e^{-\sqrt\lambda(r-\varepsilon)}\right) \quad\mbox{ for any } r\in [\varepsilon, s(0)].
\end{equation}

Let $(u(r,t), s(t))$ be a solution of~\eqref{mainfinal1} and~\eqref{freeboundary}. 

Then, there exists~$\kappa_\star>0$, depending on $\varepsilon$, $s(0)$,
$u_0$, $\lambda$, $\underline u$, $\overline u$,
$\widetilde u$, $\Gamma$, $\mu$ and $u_B$,
such that
\[
\limsup_{t\to +\infty} s(t) =+\infty,
\]as long as~$\kappa\ge\kappa_\star$.
\end{theorem}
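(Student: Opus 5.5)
The plan is to argue through a mass-type functional rather than pointwise information on $u$. Let
$$M(t):=\int_\varepsilon^{s(t)}\big(u(r,t)-\widetilde u\big)r^2\,dr ,$$
so that \eqref{freeboundary} reads $s^2\dot s=\mu M$. Proposition~\ref{propeta} applies because of \eqref{oiuygtfdoiuy2}, so $s(t)\ge\varepsilon+\eta$ and the $\max$ in \eqref{freeboundary} plays no role. The goal is a differential inequality of the form $\dot M\ge a M-b(t)/\kappa$. For $\kappa$ large, the initial positivity given by \eqref{ctfyivgoubhjklmò,2} should then propagate and force $M$, hence $s^3$, to grow without bound.

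\emph{Differentiating $M$.} By the Leibniz rule and the boundary condition $u=\overline u$ on $\partial B_{s(t)}$,
$$\dot M=(\overline u-\widetilde u)s^2\dot s+\frac1\kappa\int_\varepsilon^{s}\Big(u_{rr}+\tfrac2r u_r-\tfrac2r u_r-\lambda u+\Gamma u_B\Big)r^2\,dr .$$
With $\chi_0=2$ the drift exactly cancels the first-order part of the radial Laplacian, so the diffusive part reduces to
$$\int_\varepsilon^s u_{rr}r^2\,dr=u_r(s,t)s^2-u_r(\varepsilon,t)\varepsilon^2-2\big(\overline u s-\underline u\varepsilon\big)+2\int_\varepsilon^s u\,dr .$$
I will bound each boundary term as follows.
\begin{enumerate}
\item A maximum principle argument (as in Lemma~\ref{lemma0<u<utilde}) gives $\Gamma u_B/\lambda\le u\le\overline u$. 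Since $u$ attains its maximum $\overline u$ at $r=s(t)$, this yields $u_r(s,t)\ge0$, so the outer flux term has the favourable sign.
\item For the inner flux I will use hypothesis \eqref{lpko,mijnhugyftcdrxctfvl,ò}. The profile on its right-hand side, $\underline u+C(1-e^{-\sqrt\lambda(r-\varepsilon)})$, should serve as an upper barrier, possibly after adjusting it on the growing domain. Since it coincides with $u$ at $r=\varepsilon$, comparison gives $u_r(\varepsilon,t)\le C\sqrt\lambda$ with $C$ independent of $\kappa$.
\item The remaining terms are bounded below by $-C_1(1+s^3)$, using $\Gamma u_B-\lambda u\ge\Gamma u_B-\lambda\overline u$.
\end{enumerate}
Substituting $s^2\dot s=\mu M$, this gives
$$\dot M\ge\mu(\overline u-\widetilde u)M-\frac{C_2}{\kappa}\big(1+s^3(t)\big),\qquad s^3(t)=s^3(0)+3\mu\int_0^tM .$$

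\emph{Closing the argument.} Condition \eqref{ctfyivgoubhjklmò,2} says $M(0)>\frac{\overline u-\widetilde u}{2}\int_\varepsilon^{s(0)}r^2dr>0$, with a margin independent of $\kappa$. Take $\kappa_\star$ so large that $C_2/\kappa_\star$ is small compared with this margin times $\mu(\overline u-\widetilde u)$. Then, by a continuity (bootstrap) argument, $M$ stays above a fixed fraction of the quantity $c_0(1+s^3)$ for all times. In that regime the $\kappa^{-1}$ term is absorbed, and $\dot M\ge\frac12\mu(\overline u-\widetilde u)M$, so $M$ grows exponentially. Then $s^3$ grows as well, and in particular $\limsup s=+\infty$.

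\emph{Main obstacle.} I expect the hardest step to be the uniform-in-$\kappa$ bound on $u_r(\varepsilon,t)$, i.e.\ justifying the barrier from \eqref{lpko,mijnhugyftcdrxctfvl,ò} on a moving domain whose outer boundary datum is the constant $\overline u$. The barrier is a stationary supersolution only for the operator without the time derivative. On the outer boundary one has to check that $\overline u$ lies below the barrier there, which depends on the sign of $\dot s$, and this is where the bootstrap must be synchronized with the comparison argument. I would first try to run both in a single continuity argument on the maximal interval where $\dot s\ge0$.
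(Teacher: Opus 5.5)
Your proposal is correct and is essentially the paper's argument written in differential form. Your $M$ is $\dot h/\mu$ with $h=\frac13(s^3-\varepsilon^3)$; the paper integrates the same $r^2$-weighted identity in space-time, uses the same flux bounds $u_r(s(t),t)\ge0$ and $u_r(\varepsilon,t)\le C_0$ (Proposition~\ref{lemmabarriera1}), and closes with a continuity argument showing $\frac{\kappa}{\mu}\dot h\ge\frac12\big(\kappa(\overline u-\widetilde u)-\frac{\lambda}{\mu}\big)h$, i.e.\ that $M$ stays above a fixed multiple of $h$.

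The obstacle you anticipate is removed by the $\eta$ of Proposition~\ref{propeta}, which you already invoke. Since $s(t)-\varepsilon\ge\eta$ and $\eta\le s(0)-\varepsilon$, the time-independent barrier $\phi(r)=\underline u+\frac{\overline u-\underline u}{1-e^{-\sqrt\lambda\eta}}\big(1-e^{-\sqrt\lambda(r-\varepsilon)}\big)$ has the following properties: it still lies above $u_0$ by the assumed exponential upper bound on $u_0$; it is a supersolution of the full parabolic operator, since its $t$-derivative vanishes; and it satisfies $\phi(s(t))\ge\phi(\varepsilon+\eta)=\overline u$ whatever the sign of $\dot s$. Hence $u_r(\varepsilon,t)\le\sqrt\lambda(\overline u-\underline u)/(1-e^{-\sqrt\lambda\eta})$ for all $t$, with no need to couple the comparison to the bootstrap. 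Your synchronized version, run on the maximal interval where the bootstrap inequality holds (so that $\dot s>0$ and $s(t)\ge s(0)$ there), would also close, but it is unnecessary.
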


\begin{remark} An explicit value for~$\kappa_\star$
in Theorem~\ref{main2} is
\begin{equation}\label{kopfweoiojiwavnjbhin}
\kappa\ge\kappa_\star:=\max\left\{\dfrac{2\lambda}{\mu(\overline u-\widetilde u)}, \frac{16}{\mu(\overline u-\widetilde u)^2}\left(\lambda\widetilde u +\dfrac{\sqrt\lambda(\overline u-\underline u)}{\eta(1-e^{-\sqrt\lambda \eta})}+\frac{2\overline u}{\varepsilon^2}+\dfrac{2(\overline u -\underline u)}{\varepsilon\eta}\right)\right\},
\end{equation}
where $\eta$ is as in Proposition \ref{propeta}.
\end{remark}

We refer the reader to Figure~\ref{PLOTSANDU} for a numerical sketch of a tumor invasion of the type described by Theorem~\ref{main2}.  The parameter values used 
for these graphs are merely illustrative,
but the plot in Figure~\ref{PLOTSANDU} is qualitative in good agreement with medical data available in the literature. Indeed, one can compare, for instance, the left panel in Figure~\ref{PLOTSANDU} with the experimental data plotted in Figure~1(A) in~\cite{SILVO},  and
in Figures~2 and~3 of~\cite{SCHIK}.

See also Appendix \ref{apprdfctbhjkl–òlkjhbgvfdrdxcfvgbhnjmk,} for examples of initial cell densities
satisfying both~\eqref{ctfyivgoubhjklmò,2} and~\eqref{lpko,mijnhugyftcdrxctfvl,ò}.

We stress that condition~\eqref{kopfweoiojiwavnjbhin}, though perhaps not completely transparent at a first sight, aligns with our physical understanding of tumor growth and diffusion.
Indeed,
if the growth of the tumor domain ${\Omega(t)}$ is sufficiently rapid (as formalized by~\eqref{kopfweoiojiwavnjbhin}), the pressure generated by cell proliferation must necessarily be large
(because this pressure is the driving force behind the expansion of the high-density region, which defines the tumor domain, recall~\eqref{spuntofinale}).

Given the monotonic relationship between the cell proliferation rate $S(u)$ and the cell density~$u$ (as quantified by~\eqref{PROPORZIONALE}), a large proliferation-induced pressure within the tumor domain~$\Omega(t)$ requires the local cell density~$u$ to be large.

Accordingly,
since diffusion acts to spread and thus decreases the concentration of tumor cells inside a given domain, this rapid growth of the tumor domain implies that the parabolic equation modeling the cell transport must have a small diffusion coefficient
(at least, sufficiently small to ensure that the cells do not tend to escape the region~$\Omega(t)$ ``too fast'').

As a consequence, because $\kappa$ is defined as the
reciprocal of the diffusion coefficient, fast tumor growth should correspond to sufficiently large values of $\kappa$ (coherently with~\eqref{kopfweoiojiwavnjbhin}).
As a counterpart of this argument, we see that diffusion limits tumor spheroid
growth and determines the maximum size and growth rate: this fact is confirmed by in vitro
experiments (see [\cite{MR2327053}, page 181] and the references therein).

A further remark concerns our choice to distinguish between the regimes of large and small $\kappa$.
We point out that, although the effective diffusion coefficient is relatively similar across human tissues, small variations may arise due to microstructural properties such as porosity, tortuosity, and extracellular matrix density. However, these differences are moderate, and oxygen availability is primarily governed by vascular architecture and cellular consumption rather than by substantial changes in diffusivity (see \cite{sykova08, vaupel07}).

In our model, we nevertheless distinguish between the regimes $\kappa$ ``large" and $\kappa$ ``small", in order to investigate the sensitivity of the viable rim thickness to diffusion efficiency. This should not be interpreted as assuming physiologically large variations in the true diffusion coefficient across tissues. Rather,  here $\kappa$ is used as an effective macroscopic parameter that incorporates modest changes induced by tissue microstructure (such as porosity and tortuosity) and other unresolved heterogeneities affecting transport. In this sense, the two regimes represent limiting cases of slightly more or less permissive diffusion environments, rather than distinct biological classes of tissues.

\subsection{Comparison with existing models}
We point out that the presence of both the drift term and the necrotic core marks a clear distinction from our model and the others present in the literature. 
However, we compare our results with those stated in the pioneering work~\cite{MR1684873}, even though it considers neither the presence of a drift term nor the presence of a necrotic core.
By comparing Theorem~\ref{th:main1} with~\cite[Theorem~4.1]{MR1684873}, one can notice that the presence of the drift term still ensures a lower bound for the tumor radius. This guarantees that the tumor persists once it has been formed. To the best of our knowledge, this is the first time that such a lower bound is given in a fully explicit form, expressed in terms of the initial data, including the radius of the necrotic core (see formulas \eqref{delta0esplicita}, \eqref{delta1esplicita}, \eqref{deltastar}). We believe this feature to be particularly relevant, as it makes possible, in principle, to directly compare the value predicted by the theoretical analysis with those obtained from experimental measurements.

In addition, by means of a barrier argument, we obtain an additional lower bound that is independent of $\kappa$ (see Proposition \ref{propeta}). To the best of our knowledge, neither this kind of estimate nor this type of argument has been employed in earlier works.

In Theorem~\ref{main3}, we show that either the radius $s(t)$ of the tumor remains uniformly bounded over time, or it may experience a fast contraction. This latter possibility is entirely new and does not arise in any of the previously considered models.
The results in Theorem~\ref{main3} are guaranteed by imposing a smallness condition on the parameter $\kappa$ and mild regularity assumptions on the initial datum~$u_0(r)$. However,  the proof of the result relies on several technical arguments related to the construction of appropriate barriers for the control of the radial derivative of~$u(r,t)$ at both the boundary of the necrotic core and the moving free boundary.
Since these constructions are related to the particular form of our model equation, which takes into account the presence of a necrotic core and a drift term, they do not arise in~\cite[Theorem 5.1]{MR1684873}.


\subsection{Organization of the paper}
The paper is organized as follows.
In Section~\ref{dertfyuiokpl} we compute the stationary solution to the problem given by~\eqref{mainfinal1} and~\eqref{freeboundary}. In Section~\ref{dtfygubhijnkml,ò.-aklmSJNBH} we provide some preliminary results needed to prove Theorem~\ref{th:main1}. In Section~\ref{mkljnbgvyftcdrxsedctfvygbhunijm}, we present the proofs of Theorems~\ref{th:main1} and Proposition \ref{propCK>0}.

In Section \ref{èlpkmojbhugvyufctydrstetdryftgyhu} we present the proof of Theorem \ref{main3}. In Section \ref{kpoijhuygtfrdetfyguhijk} we prove both Proposition \ref{propeta} and Theorem \ref{main2}.
  
Finally, in Appendix \ref{apprdfctbhjkl–òlkjhbgvfdrdxcfvgbhnjmk,}, we illustrate some examples of functions satisfying both assumption \eqref{ctfyivgoubhjklmò,2} and \eqref{lpko,mijnhugyftcdrxctfvl,ò}.

\section{Stationary solutions}\label{dertfyuiokpl}
As a first step towards the proof of Theorem~\ref{th:main1}, we need to write the (unique and radial) stationary solution of problem~\eqref{mainfinal1} and~\eqref{freeboundary} explicitly. To do this, we let~$R_0>\varepsilon$ and look for a solution of~\eqref{mainfinal1} not depending on time and with~$s(t)=R_0$.

In light of the radial symmetry of the solution, we can write
\begin{equation}\label{laplradial}
\Delta u(x) = u''(r)+\frac{2}{r}u'(r)\qquad {\mbox{and}}\qquad
x\cdot\nabla u(x) = r\, u'(r).
\end{equation}
Hence, a stationary solution of~\eqref{mainfinal1} is determined by the system
\begin{equation}\label{stationarysystem}
\begin{cases}
u''(r)+\displaystyle\frac{2-\chi_0}{r}u'(r) -\lambda u(r)+\Gamma u_B=0 &\quad\mbox{ if } \varepsilon<r<R_0,\\
u(r) = \overline{u} &\quad\mbox{ if } r=R_0, \\
u(r) = \underline{u}, &\quad\mbox{ if } r=\varepsilon.
\end{cases}
\end{equation}

We point out that
finding an exact solution in terms of elementary functions of this problem
is not always feasible and it depends on the specific value of the constant~$\chi_0>0$.

In the next result we prove that, by assuming~\eqref{mainfinal1:DUE}, it is possible to determine an exact solution in terms of elementary functions of~\eqref{stationarysystem}. For this, from now on we set~$\chi_0:=2$.

\begin{proposition}\label{propsoluzionestazionaria}
For any $R_0>\varepsilon$ the problem in~\eqref{stationarysystem} admits a solution given by
\begin{equation}\label{stationaryall}
u_\star(r):=
\frac{\left(\overline{u}-\frac{\Gamma u_B}{\lambda}\right)
\sinh\big(\sqrt{\lambda}(r-\varepsilon)\big) +\left(\underline{u}-\frac{\Gamma u_B}{\lambda}\right)
\sinh\big(\sqrt{\lambda}(R_0-r)\big)}{\sinh\big(\sqrt{\lambda}(R_0-\varepsilon)\big)}
+\frac{\Gamma u_B}{\lambda}.
\end{equation} 
\end{proposition}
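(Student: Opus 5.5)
The claim can be checked directly. With $\chi_0=2$, the coefficient of $u'$ in the first line of \eqref{stationarysystem} is $\frac{2-\chi_0}{r}=0$. The ODE therefore reduces to the constant-coefficient equation
\[
u''-\lambda u+\Gamma u_B=0 \qquad\mbox{in } (\varepsilon,R_0).
\]
We first absorb the source term by setting $v:=u-\frac{\Gamma u_B}{\lambda}$. The new unknown solves the homogeneous equation $v''=\lambda v$, with boundary values
\[
v(\varepsilon)=\underline u-\frac{\Gamma u_B}{\lambda},\qquad v(R_0)=\overline u-\frac{\Gamma u_B}{\lambda}.
\]

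Next we write $v$ in the basis $\sinh\big(\sqrt\lambda(r-\varepsilon)\big)$, $\sinh\big(\sqrt\lambda(R_0-r)\big)$. Each of these functions satisfies $f''=\lambda f$, since differentiating twice produces a factor $\lambda$ (and the sign change inside the second argument is squared away). Each also vanishes at exactly one endpoint. Linear combinations therefore solve the homogeneous ODE, and the coefficients are read off from the boundary values.

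At $r=\varepsilon$ the first term vanishes and the second equals $\left(\underline u-\frac{\Gamma u_B}{\lambda}\right)\frac{\sinh(\sqrt\lambda(R_0-\varepsilon))}{\sinh(\sqrt\lambda(R_0-\varepsilon))}$. Adding $\frac{\Gamma u_B}{\lambda}$ back gives $u_\star(\varepsilon)=\underline u$. Symmetrically, $u_\star(R_0)=\overline u$.

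The denominator $\sinh\big(\sqrt\lambda(R_0-\varepsilon)\big)$ is strictly positive because $R_0>\varepsilon$ and $\lambda=\Gamma+\lambda_0>0$, so the formula is well defined.

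No step is a real obstacle; the only point needing care is that the cancellation of the drift term uses exactly $\chi_0=2$. This happens because $\Delta u-\frac{2}{r^2}x\cdot\nabla u=u''+\frac2r u'-\frac2r u'=u''$ by \eqref{laplradial}, and it justifies the restriction \eqref{mainfinal1:DUE}. If uniqueness were also wanted, it would follow from the maximum principle: the difference of two solutions satisfies $w''=\lambda w$ with zero boundary data, so $w\equiv0$.
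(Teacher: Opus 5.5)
Your proof is correct and follows essentially the same route as the paper: both use $\chi_0=2$ to eliminate the first-order term, reducing to $u''-\lambda u+\Gamma u_B=0$, and then impose the two boundary values. The only difference is that the paper writes the general solution as $Ae^{\sqrt\lambda r}+Be^{-\sqrt\lambda r}+\frac{\Gamma u_B}{\lambda}$, solves for $A,B$ and simplifies to the $\sinh$ form, whereas you verify directly in the endpoint-adapted basis $\sinh(\sqrt\lambda(r-\varepsilon))$, $\sinh(\sqrt\lambda(R_0-r))$, which avoids that algebra.
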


\begin{proof}
Since $\chi_0=2$,
we have that~$u_\star$ satisfies the differential equation
\[
u''_\star(r)-\lambda u_\star(r)+\Gamma u_B=0 \quad\mbox{ in } \varepsilon<r<R_0,
\]
and therefore
\[
u_\star(r) = A e^{\sqrt{\lambda}r} + B e^{-\sqrt{\lambda}r} +\frac{\Gamma u_B}{\lambda},
\]
for some~$A$, $B\in\R$ to be determined. 

Since $u_\star(R_0)=\overline{u}$ and $u_\star(\varepsilon)=\underline{u}$, one obtains that
\begin{eqnarray*}
A&=&e^{-\sqrt{\lambda}\varepsilon}
\left(\frac{\overline{u} -\frac{\Gamma u_B}{\lambda}
-\left(\underline{u}-\frac{\Gamma u_B}{\lambda}\right) e^{-\sqrt{\lambda}(R_0-\varepsilon)}}{2\sinh\big(\sqrt{\lambda}(R_0-\varepsilon)\big)}\right)
\\{\mbox{and}} \qquad B&=& -e^{\sqrt{\lambda}\varepsilon}\left(
\frac{\overline{u} -\frac{\Gamma u_B}{\lambda}
-\left(\underline{u}-\frac{\Gamma u_B}{\lambda}\right)e^{\sqrt{\lambda}(R_0-\varepsilon)}}{2\sinh\big(\sqrt{\lambda}(R_0-\varepsilon)\big)}\right).
\end{eqnarray*}
Accordingly,  
\begin{eqnarray*}
u_\star(r)& =&  e^{\sqrt{\lambda}(r-\varepsilon)}
\left(\frac{\overline{u} -\frac{\Gamma u_B}{\lambda}
-\left(\underline{u}-\frac{\Gamma u_B}{\lambda}\right) e^{-\sqrt{\lambda}(R_0-\varepsilon)}}{2\sinh\big(\sqrt{\lambda}(R_0-\varepsilon)\big)}\right)
\\&&\quad-e^{-\sqrt{\lambda}(r-\varepsilon)}\left(
\frac{\overline{u} -\frac{\Gamma u_B}{\lambda}
-\left(\underline{u}-\frac{\Gamma u_B}{\lambda}\right)e^{\sqrt{\lambda}(R_0-\varepsilon)}}{2\sinh\big(\sqrt{\lambda}(R_0-\varepsilon)\big)}\right)
+\frac{\Gamma u_B}{\lambda}
\\&=&\frac{\left(\overline{u}-\frac{\Gamma u_B}{\lambda}\right)
\sinh\big(\sqrt{\lambda}(r-\varepsilon)\big)
+\left(\underline{u}-\frac{\Gamma u_B}{\lambda}\right)
\sinh\big(\sqrt{\lambda}(R_0-r)\big)}{\sinh\big(\sqrt{\lambda}(R_0-\varepsilon)\big)}
+\frac{\Gamma u_B}{\lambda},
\end{eqnarray*}
as desired.
\end{proof}

In the remaining part of this section, we prove that there exists $R_0>\varepsilon$ such that the solution found in Proposition~\ref{propsoluzionestazionaria} satisfies the free boundary condition stated in~\eqref{freeboundary} with $s(t)=R_0$ (and thus $\dot s(t)=0$), if and only if $\widetilde u$ is sufficiently large, with the precise lower bound specified in the condition~\eqref{kjinkmjnbhkmjbhkm,mjhukol8hbuijnklmò9} below. 

To this aim, we recall the following straightforward observation.

\begin{proposition}\label{Proplèpkmoò.,lmkbhul,ò.à-}
Let $a, b\in\R.$ Then,
\[
\sinh(a) +\sinh(b) = 2\sinh\left(\frac{a+b}{2}\right)\cosh\left(\frac{a-b}{2}\right).
\]
\end{proposition}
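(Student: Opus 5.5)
The identity is a sum-to-product formula, and I would prove it by direct expansion of the right-hand side using exponentials. Setting $p:=\frac{a+b}{2}$ and $q:=\frac{a-b}{2}$, we have $p+q=a$ and $p-q=b$. The goal then reduces to the product formula
\[
2\sinh(p)\cosh(q)=\sinh(p+q)+\sinh(p-q).
\]

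To verify it, I write $\sinh(p)=\frac{e^{p}-e^{-p}}{2}$ and $\cosh(q)=\frac{e^{q}+e^{-q}}{2}$. Multiplying out gives
\[
2\sinh(p)\cosh(q)=\frac{1}{2}\left(e^{p+q}+e^{p-q}-e^{-p+q}-e^{-p-q}\right).
\]
Regrouping the four terms, this equals
\[
\frac{e^{p+q}-e^{-(p+q)}}{2}+\frac{e^{p-q}-e^{-(p-q)}}{2}=\sinh(p+q)+\sinh(p-q).
\]
Substituting back $p+q=a$ and $p-q=b$ yields the claim.

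Alternatively, one could add the addition formulas $\sinh(p\pm q)=\sinh p\cosh q\pm\cosh p\sinh q$, in which the cross terms cancel. No step is a genuine obstacle; the only care needed is the bookkeeping of signs in the exponentials.
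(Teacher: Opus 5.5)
Your proposal is correct and is essentially the paper's proof: the paper also expands $2\sinh\left(\frac{a+b}{2}\right)\cosh\left(\frac{a-b}{2}\right)$ into exponentials, obtaining $\frac12(e^{a}+e^{b}-e^{-b}-e^{-a})$, and regroups it as $\sinh(a)+\sinh(b)$. The only difference is that you first substitute $p$ and $q$, which is purely notational.
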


\begin{proof}
By recalling the definitions of $\sinh$ and $\cosh$, we infer that
\[
\begin{split}&
2\sinh\left(\frac{a+b}{2}\right)\cosh\left(\frac{a-b}{2}\right) =\frac12 (e^{\frac{a+b}{2}}-e^{-\frac{a+b}{2}})(e^{\frac{a-b}{2}}+e^{-\frac{a-b}{2}})=\frac12(e^a +e^b-e^{-b}-e^{-a})\\
&\qquad\qquad= \frac{e^a -e^{-a}}{2} +\frac{e^b -e^{-b}}{2} =\sinh(a) +\sinh(b),
\end{split}
\]
as desired.
\end{proof}

\begin{proposition}\label{propabcsPI}
Let $u$ be as in~\eqref{stationaryall} and assume that~\eqref{relationutilde} holds true. 

Then, there exists at least one $R_0>\varepsilon$ such that
\begin{equation}\label{<sercvgunjkop}
\int_\varepsilon^{R_0}\big(u(r)-\widetilde u\big)r^2\,dr=0
\end{equation}
if and only if 
\begin{equation}\label{kjinkmjnbhkmjbhkm,mjhukol8hbuijnklmò9}
\widetilde u >\frac{\Gamma u_B}{\lambda}.
\end{equation}

Moreover,
\begin{equation}\label{drftguhikl}
\inf\big\{R_0>\varepsilon \mbox{ such that $\eqref{<sercvgunjkop}$ holds}\big\}>\varepsilon.
\end{equation}
\end{proposition}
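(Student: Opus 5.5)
The plan is to study, for $R>\varepsilon$, the function
\[
F(R):=\int_\varepsilon^{R}\big(u_R(r)-\widetilde u\big)r^2\,dr,
\]
where $u_R$ denotes the stationary solution~\eqref{stationaryall} of Proposition~\ref{propsoluzionestazionaria} with $R_0=R$. I will also write $a:=\frac{\Gamma u_B}{\lambda}$ and $h:=\sqrt\lambda(R-\varepsilon)$. By~\eqref{u_0positiva} and~\eqref{relationutilde}, we have $0<a<\underline u<\overline u$, so both coefficients $\overline u-a$ and $\underline u-a$ in~\eqref{stationaryall} are positive. Since $u_R(r)$ depends continuously on $(r,R)$ for $R>\varepsilon$, the function $F$ is continuous on $(\varepsilon,+\infty)$. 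The whole proof is then a sign analysis of $F$, followed by the intermediate value theorem.

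\emph{Pointwise lower bound.} For $r\in[\varepsilon,R]$, I bound both $\sinh$ terms from below by the smaller coefficient $\underline u-a$. Then I apply Proposition~\ref{Proplèpkmoò.,lmkbhul,ò.à-} with $\sqrt\lambda(r-\varepsilon)$ and $\sqrt\lambda(R-r)$, whose sum is $h$, and use $\cosh\ge1$. This gives
\[
u_R(r)-a\ \ge\ (\underline u-a)\,\frac{2\sinh(h/2)}{\sinh h}\ =\ \frac{\underline u-a}{\cosh(h/2)}\ >0 .
\]
This single bound yields two of the three claims.

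\emph{Necessity.} If $\widetilde u\le a$, then $u_R-\widetilde u\ge u_R-a>0$ on $[\varepsilon,R]$. Hence $F(R)>0$ for every $R>\varepsilon$, and~\eqref{<sercvgunjkop} has no solution.

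\emph{Claim~\eqref{drftguhikl}.} Since $\underline u>\widetilde u$, I choose $h_0>0$ such that $\frac{\underline u-a}{\cosh(h_0/2)}>\widetilde u-a$. Then for $R\le\varepsilon+h_0/\sqrt\lambda$ the integrand of $F$ is strictly positive, so $F(R)>0$. Therefore every zero $R_0$ satisfies $R_0>\varepsilon+h_0/\sqrt\lambda$, and this lower bound does not depend on the zero. This gives~\eqref{drftguhikl}, and it holds regardless of the sign of $\widetilde u-a$.

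\emph{Sufficiency.} Assume $\widetilde u>a$. By continuity and the positivity of $F$ near $\varepsilon$, it suffices to show that $F(R)\to-\infty$ as $R\to+\infty$. For $h\ge1$ and $0\le x\le h$ we have
\[
\frac{\sinh x}{\sinh h}\le\frac{e^{x-h}}{1-e^{-2}} .
\]
Using this on each term of~\eqref{stationaryall} gives
\[
0<u_R(r)-a\le C\Big(e^{-\sqrt\lambda(R-r)}+e^{-\sqrt\lambda(r-\varepsilon)}\Big),
\qquad C:=\frac{\overline u-a}{1-e^{-2}} .
\]
Integrating against $r^2\le R^2$, the right-hand side contributes at most $2CR^2/\sqrt\lambda$. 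Consequently
\[
F(R)=\int_\varepsilon^R(u_R-a)r^2\,dr-(\widetilde u-a)\frac{R^3-\varepsilon^3}{3}\le \frac{2C}{\sqrt\lambda}R^2-(\widetilde u-a)\frac{R^3-\varepsilon^3}{3}\longrightarrow-\infty .
\]
The intermediate value theorem then produces some $R_0>\varepsilon$ with $F(R_0)=0$.

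The only point requiring care is this last step. One must make sure the exponential smallness of $u_R-a$ in the bulk beats the polynomial weight $r^2$, so that the $O(R^2)$ term is dominated by the $R^3$ term. The uniform bound above handles this.
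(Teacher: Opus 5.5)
Your proof is correct. Its skeleton is the same as the paper's: continuity of $F$, positivity of $F$ just to the right of $\varepsilon$, $F\to-\infty$ at infinity, the intermediate value theorem, and the $\sinh$ sum identity for necessity. The difference is in how the two analytic estimates are obtained.

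The paper computes $\int_\varepsilon^{R_0}(u-\widetilde u)r^2\,dr$ in closed form (via $\coth$ and $\mathrm{csch}$) and Taylor-expands at $R_0=\varepsilon+\delta$. This gives $\int_\varepsilon^{\varepsilon+\delta}(u-\widetilde u)r^2\,dr=\big(\frac{\overline u+\underline u}{2}-\widetilde u\big)\varepsilon^2\delta+o(\delta)$. The limit as $R_0\to+\infty$ is then read off from the same formula without further detail.

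You never compute an integral. Your pointwise bound $u_R-a\ge(\underline u-a)/\cosh(h/2)$ (with your $a=\frac{\Gamma u_B}{\lambda}$) shows that the integrand itself is positive on thin shells. Your exponential estimate gives the $O(R^2)$ control at infinity explicitly. This is shorter, and it yields something the expansion does not. When $\widetilde u>a$, every zero satisfies $R_0-\varepsilon\ge\frac{2}{\sqrt\lambda}\operatorname{arccosh}\frac{\underline u-a}{\widetilde u-a}$. That explicit bound could replace the implicitly defined term $\frac{R_0-\varepsilon}{2}$ in~\eqref{delta0esplicita}. The expansion, for its part, identifies the exact leading coefficient, reflecting that the stationary profile is nearly linear on a thin shell.

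One point needs tidying. The proposition assumes only~\eqref{relationutilde}, so $a<\underline u$ is not among its hypotheses. Condition~\eqref{u_0positiva} concerns the initial datum of the evolution problem, and through $u_0(\varepsilon)=\underline u$ it gives at best $a\le\underline u$.

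This is harmless for sufficiency, where $a<\widetilde u<\underline u$. It is also harmless for~\eqref{drftguhikl}, since the case $a\ge\underline u$ has no zeros at all once necessity is settled.

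In the necessity direction, however, the stated hypotheses allow $\widetilde u<\underline u<a$. Then $u_R-a<0$ near $r=\varepsilon$, and your step $u_R-\widetilde u\ge u_R-a>0$ fails. The paper's version avoids this. With $x=\sqrt\lambda(r-\varepsilon)$ it writes
\[
u_R-\widetilde u=\frac{(\overline u-\widetilde u)\sinh x+(\underline u-\widetilde u)\sinh(h-x)}{\sinh h}+(a-\widetilde u)\,\frac{\sinh h-\sinh x-\sinh(h-x)}{\sinh h}.
\]
The first term is positive, and the second is nonnegative when $\widetilde u\le a$. Alternatively, $w:=u_R-\widetilde u$ satisfies $w''-\lambda w=\lambda(\widetilde u-a)\le 0$ with positive boundary values, so the maximum principle gives $w>0$.
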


We point out that the statement in~\eqref{<sercvgunjkop} guarantees the
free boundary condition stated in~\eqref{freeboundary}.

\begin{proof}[Proof of Proposition~\ref{propabcsPI}]
Assume that \eqref{kjinkmjnbhkmjbhkm,mjhukol8hbuijnklmò9} holds true.
In order to prove~\eqref{<sercvgunjkop}, we treat $R_0$ as a variable and we define, for all~$R_0\ge\varepsilon$,
\[
m(R_0):=-\int_\varepsilon^{R_0}(u(r)-\widetilde u)r^2\,dr.
\]
We notice that, from \eqref{stationaryall}, the function $u$ also depends on $R_0$.

Now, we want to check that $m(R_0)=0$ for some $R_0>\varepsilon$. To start with, we observe that~$m(\varepsilon)=0$.

Moreover, recalling~\eqref{stationaryall}, we compute
\begin{eqnarray*}
&&m(R_0)\\&=&\left(\widetilde u-\frac{\Gamma u_B}\lambda\right)
\frac{{R_0}^3-\varepsilon^3}{3}
-\frac{\overline{u}-\frac{\Gamma u_B}{\lambda}}{\sinh\big(\sqrt{\lambda}(R_0-\varepsilon)\big)}\int_\varepsilon^{R_0}
\sinh\big(\sqrt{\lambda}(r-\varepsilon)\big) \,r^2\,dr\\&&
-
\frac{\underline{u}-\frac{\Gamma u_B}{\lambda}}{\sinh\big(\sqrt{\lambda}(R_0-\varepsilon)\big)}\int_\varepsilon^{R_0}
\sinh\big(\sqrt{\lambda}(R_0-r)\big)\,r^2\,dr\\
&=&
\left(\widetilde u-\frac{\Gamma u_B}\lambda\right)
\frac{{R_0}^3-\varepsilon^3}{3}
\\&&-\frac{\overline{u}-\frac{\Gamma u_B}{\lambda}}{\sinh\big(\sqrt{\lambda}(R_0-\varepsilon)\big)}
\left(
\frac{(2 + \lambda {R_0}^2) \cosh\big(\sqrt\lambda (R_0-\varepsilon )\big)}{\lambda^{3/2}} - \frac{2 R_0\sinh\big(\sqrt\lambda (R_0-\varepsilon)\big)}{\lambda}
-
\frac{(2 + \lambda \varepsilon^2) }{\lambda^{3/2}}\right)
\\&&
-
\frac{\underline{u}-\frac{\Gamma u_B}{\lambda}}{\sinh\big(\sqrt{\lambda}(R_0-\varepsilon)\big)}
\left( -\frac{(2 + \lambda {R_0}^2) }{\lambda^{3/2}} 
+\frac{(2 + \lambda \varepsilon^2) \cosh\big(\sqrt\lambda ( R_0-\varepsilon)\big)}{\lambda^{3/2}} 
+ \frac{2 \varepsilon \sinh\big(\sqrt\lambda (R_0-\varepsilon)\big)}{\lambda}
\right).
\end{eqnarray*}
Therefore, factorizing common terms,
\begin{eqnarray*}
m(R_0)&=&
\left(\widetilde u-\frac{\Gamma u_B}\lambda\right)
\frac{{R_0}^3-\varepsilon^3}{3}+\frac2{\lambda}\left(
R_0\left(\overline{u}-\frac{\Gamma u_B}{\lambda}\right)
-\varepsilon\left(\underline{u}-\frac{\Gamma u_B}{\lambda}
\right)\right)
\\&&
-\left(
\left(\overline{u}-\frac{\Gamma u_B}{\lambda}\right)
(2 + \lambda {R_0}^2) 
+\left(\underline{u}-\frac{\Gamma u_B}{\lambda}
\right)(2 + \lambda \varepsilon^2) \right)
\frac{\coth\big(\sqrt\lambda (R_0-\varepsilon )\big)}{\lambda^{3/2}}
\\&&+
\left(
\left(\overline{u}-\frac{\Gamma u_B}{\lambda}\right)(2 + \lambda \varepsilon^2) 
+
\left(\underline{u}-\frac{\Gamma u_B}{\lambda}\right)
(2 + \lambda {R_0}^2) \right)
\frac{\text{csch}\big(\sqrt{\lambda}(R_0-\varepsilon)\big)}{\lambda^{3/2}}
.
\end{eqnarray*}

We now show that the function $m$ is negative in a right neighbourhood of~$\varepsilon$. To do this, we let~$\delta\in(0,1)$ and we compute
\begin{eqnarray*}&&
m(\varepsilon+\delta)\\&=&
\left(\widetilde u-\frac{\Gamma u_B}\lambda\right)
\frac{(\varepsilon+\delta)^3-\varepsilon^3}{3}
+\frac2{\lambda}\left(
(\varepsilon+\delta)\left(\overline{u}-\frac{\Gamma u_B}{\lambda}\right)
-\varepsilon\left(\underline{u}-\frac{\Gamma u_B}{\lambda}
\right)\right)
\\&&
-\left(
\left(\overline{u}-\frac{\Gamma u_B}{\lambda}\right)
(2 + \lambda (\varepsilon+\delta)^2) 
+\left(\underline{u}-\frac{\Gamma u_B}{\lambda}
\right)(2 + \lambda \varepsilon^2) \right)
\frac{\coth\big(\sqrt\lambda \delta\big)}{\lambda^{3/2}}
\\&&+
\left(
\left(\overline{u}-\frac{\Gamma u_B}{\lambda}\right)(2 + \lambda \varepsilon^2) 
+
\left(\underline{u}-\frac{\Gamma u_B}{\lambda}\right)
(2 + \lambda (\varepsilon+\delta)^2) \right)
\frac{\text{csch}\big(\sqrt{\lambda}\delta\big)}{\lambda^{3/2}}\\
&=&\left(\widetilde u-\frac{\Gamma u_B}\lambda\right)
\frac{3\varepsilon^2\delta+3\varepsilon\delta^2+\delta^3}{3}
+\frac2{\lambda}\left(
\delta\left(\overline{u}-\frac{\Gamma u_B}{\lambda}\right)
+\varepsilon(\overline{u}-\underline{u})\right)
\\&&
-\left(\overline{u}+\underline{u}-\frac{2\Gamma u_B}{\lambda}\right)
(2 + \lambda \varepsilon^2)
\frac{\coth\big(\sqrt\lambda \delta\big)}{\lambda^{3/2}}
-\left(\overline{u}-\frac{\Gamma u_B}\lambda\right)(2\varepsilon\delta+\delta^2)
\frac{\coth\big(\sqrt\lambda \delta\big)}{\sqrt\lambda}
\\&&+
\left(\overline{u}+\underline{u}-\frac{2\Gamma u_B}{\lambda}\right)
(2 + \lambda \varepsilon^2) 
\frac{\text{csch}\big(\sqrt{\lambda}\delta\big)}{\lambda^{3/2}}
+
\left(\underline{u}-\frac{\Gamma u_B}\lambda\right)(2\varepsilon\delta+\delta^2)
\frac{\text{csch}\big(\sqrt{\lambda}\delta\big)}{\sqrt\lambda}
.
\end{eqnarray*}
As a result, using that
\begin{eqnarray*}
\coth\big(\sqrt\lambda \delta\big)&=&
\frac{1}{\sqrt{\lambda}\delta}+\frac{\sqrt{\lambda}\delta}{3}+o(\delta)
\\
{\mbox{and }}\qquad 
\text{csch}\big(\sqrt{\lambda}\delta\big)&=&
\frac{1}{\sqrt{\lambda}\delta}
-\frac{\sqrt{\lambda}\delta}{6}+o(\delta),\end{eqnarray*}
we find that
\begin{eqnarray*}
m(\varepsilon+\delta)&=&
\left(\widetilde u-\frac{\Gamma u_B}\lambda\right)
\varepsilon^2\delta
+\frac2{\lambda}\left(
\delta\left(\overline{u}-\frac{\Gamma u_B}{\lambda}\right)
+\varepsilon(\overline{u}-\underline{u})\right)
\\&&
-\frac{\left(\overline{u}+\underline{u}-\frac{2\Gamma u_B}{\lambda}\right)
(2 + \lambda \varepsilon^2)}{\lambda^{3/2}}
\left(\frac{1}{\sqrt{\lambda}\delta}+\frac{\sqrt{\lambda}\delta}{3}+o(\delta)\right)
-\frac{\left(\overline{u}-\frac{\Gamma u_B}\lambda\right)(2\varepsilon+\delta)}{\lambda}
\\&&+
\frac{\left(\overline{u}+\underline{u}-\frac{2\Gamma u_B}{\lambda}\right)
(2 + \lambda \varepsilon^2) }{\lambda^{3/2}}
\left(\frac{1}{\sqrt{\lambda}\delta}
-\frac{\sqrt{\lambda}\delta}{6}+o(\delta)\right)
+\frac{
\left(\underline{u}-\frac{\Gamma u_B}\lambda\right)(2\varepsilon+\delta)}{\lambda}+o(\delta)\\
&=&
\left(\left(\widetilde u-\frac{\Gamma u_B}\lambda\right)
\varepsilon^2
+\frac2{\lambda}\left(\overline{u}-\frac{\Gamma u_B}{\lambda}\right)
-
\frac{\left(\underline{u}+\overline{u}-\frac{2\Gamma u_B}{\lambda}\right)
(2 + \lambda \varepsilon^2) }{3\lambda}
-\frac{\overline{u}-\frac{\Gamma u_B}\lambda}{\lambda}
\right.\\&&\left.\qquad
-
\frac{\left(\underline{u}+\overline{u}-\frac{2\Gamma u_B}{\lambda}\right)
(2 + \lambda \varepsilon^2) }{6\lambda}
+\frac{
\underline{u}-\frac{\Gamma u_B}\lambda}{\lambda}
\right)\delta +o(\delta)\\&=&
\left(\widetilde u-\frac{\overline{u}+\underline{u}}2\right)
\varepsilon^2\delta +o(\delta)
.
\end{eqnarray*}
From this and~\eqref{relationutilde} we infer that
\begin{equation}\label{zsrdcfgtygvbhu}
m(\varepsilon+\delta)<0  \quad \mbox{ if $\delta$ is small enough}.
\end{equation}

Moreover, we have 
\begin{equation*}
\lim_{R_0\to +\infty}m(R_0)=+\infty.
\end{equation*}
Combining this and~\eqref{zsrdcfgtygvbhu} we deduce that there exists at least one $R_0>\varepsilon$ such that $m(R_0)=0$, which proves~\eqref{<sercvgunjkop}.

Moreover, from~\eqref{zsrdcfgtygvbhu} we infer that
\[
\inf\big\{R_0>\varepsilon \mbox{ such that $\eqref{<sercvgunjkop}$ holds}\big\}=\min\big\{R_0>\varepsilon \mbox{ such that $\eqref{<sercvgunjkop}$ holds}\big\},
\]
which gives \eqref{drftguhikl}.

To conclude the proof of Proposition~\ref{propabcsPI}, we prove that if \eqref{<sercvgunjkop} is in force, then \eqref{kjinkmjnbhkmjbhkm,mjhukol8hbuijnklmò9} holds true. Assume by contradiction that it is not, namely
\begin{equation}\label{lokijhnubgyvftcdvgybkm,}
\widetilde u \le\frac{\Gamma u_B}{\lambda}.
\end{equation}
From this and \eqref{stationaryall}, we have that
\[
\begin{split}
u(r)-\widetilde u&= \frac{(\overline{u}-\widetilde u)
\sinh\big(\sqrt{\lambda}(r-\varepsilon)\big)}{\sinh\big(\sqrt{\lambda}(R_0-\varepsilon)\big)} +\frac{(\underline{u}-\widetilde u)\sinh\big(\sqrt{\lambda}(R_0-r)\big)}{\sinh\big(\sqrt{\lambda}(R_0-\varepsilon)\big)}\\
&+\frac{\left(\frac{\Gamma u_B}{\lambda}-\widetilde u\right)}{\sinh\big(\sqrt{\lambda}(R_0-\varepsilon)\big)}\left(\sinh\big(\sqrt{\lambda}(R_0-\varepsilon)\big)- \sinh\big(\sqrt{\lambda}(r-\varepsilon)\big) -\sinh\big(\sqrt{\lambda}(R_0-r)\big)\right).
\end{split}
\]

We claim that
\begin{equation}\label{okmjnvioonjvhnjoimòwkN}
\sinh\big(\sqrt{\lambda}(R_0-\varepsilon)\big)- \sinh\big(\sqrt{\lambda}(r-\varepsilon)\big) -\sinh\big(\sqrt{\lambda}(R_0-r)\big)>0 \quad\mbox{ for any } r\in (\varepsilon, R_0).
\end{equation}
Indeed, by Proposition \ref{Proplèpkmoò.,lmkbhul,ò.à-} (used with~$a:=\sqrt\lambda(R_0-r)$ and~$b:=\sqrt\lambda(r-\varepsilon)$)
and the fact that the function $t\mapsto\cosh(t)$ is strictly increasing for $t\in (0, +\infty)$, we have that
\begin{eqnarray*} \sinh\big(\sqrt{\lambda}(r-\varepsilon)\big)+\sinh\big(\sqrt{\lambda}(R_0-r)\big)&= &
2\sinh\left(\frac{\sqrt\lambda (R_0-\varepsilon)}{2}\right) \cosh\left(\frac{\sqrt\lambda (R_0+\varepsilon-2r)}{2}\right)
\\&<&2\sinh\left(\frac{\sqrt\lambda (R_0-\varepsilon)}{2}\right)\cosh\left(\frac{\sqrt\lambda (R_0-\varepsilon)}{2}\right).\end{eqnarray*}
Now, using Proposition \ref{Proplèpkmoò.,lmkbhul,ò.à-} (here applied with~$a:=\sqrt\lambda(R_0-\varepsilon$ and~$b:=0$),
\[
\sinh\big(\sqrt{\lambda}(r-\varepsilon)\big)+\sinh\big(\sqrt{\lambda}(R_0-r)\big) < \sinh\big(\sqrt{\lambda}(R_0-\varepsilon)\big).
\]
This proves the claim in \eqref{okmjnvioonjvhnjoimòwkN}.

Thus, by \eqref{relationutilde}, \eqref{lokijhnubgyvftcdvgybkm,} and \eqref{okmjnvioonjvhnjoimòwkN}, we deduce that
\[
u(r)-\widetilde u>0 \quad\mbox{ for any } r\in (\varepsilon, R_0), 
\]
which contradicts \eqref{<sercvgunjkop}.  This concludes the proof.
\end{proof}

We point out that, from Proposition \ref{propabcsPI} and \eqref{relationutilde}, one obtains the relation in  \eqref{relationutilde2}.

\section{Preliminary results towards the proof of Theorem~\ref{th:main1}}\label{dtfygubhijnkml,ò.-aklmSJNBH}

In this section we give some technical results which will be helpful in proving Theorem~\ref{th:main1}.

To start with, we use the comparison principle to prove that the solutions of~\eqref{mainfinal1} and~\eqref{freeboundary} are positive and bounded above by~$\overline{u}$.

\begin{lemma}\label{lemma0<u<utilde}
Assume~\eqref{u_0positiva} and~\eqref{relationutilde2}.
Let $(u(r,t), s(t))$ be a solution of~\eqref{mainfinal1} and~\eqref{freeboundary}.

Then, for all $\varepsilon\le r\le s(t)$ and~$t\ge0$,
\begin{equation}\label{prima0}
u(r,t)\ge \frac{\Gamma u_B}{\lambda}>0
\end{equation}
and
\begin{equation}\label{prima4}
u(r,t)\leq\overline{u}.
\end{equation}
\end{lemma}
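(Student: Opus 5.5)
We prove both bounds with the weak maximum principle for the parabolic operator, applied on the non-cylindrical domain $Q_T:=\{(r,t):\ \varepsilon<r<s(t),\ 0<t\le T\}$. Written in radial variables with $\chi_0=2$, the equation in~\eqref{mainfinal1} becomes
\[
\kappa u_t = u_{rr}+\frac{2}{r}u_r-2\,\frac{1}{r}\,u_r-\lambda u+\Gamma u_B = u_{rr}-\lambda u+\Gamma u_B ,
\]
because the drift term $-\frac{2}{r^2}\,x\cdot\nabla u=-\frac{2}{r}u_r$ exactly cancels the radial part of the Laplacian. Set $v:=u-\frac{\Gamma u_B}{\lambda}$. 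Then $v$ solves the homogeneous equation
\[
\kappa v_t=v_{rr}-\lambda v\quad\mbox{in }Q_T .
\]
The zeroth order coefficient $-\lambda$ is negative, so the classical weak maximum principle applies with no exponential rescaling. Since $s$ is continuous, it also holds on $Q_T$, where the parabolic boundary consists of three pieces: the bottom $\{t=0\}$, the fixed wall $\{r=\varepsilon\}$, and the curve $\{r=s(t)\}$.

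\emph{Lower bound~\eqref{prima0}.} We check the sign of $v$ on the parabolic boundary.
\begin{itemize}
\item At $t=0$ we have $v\ge 0$ by~\eqref{u_0positiva}.
\item On $r=\varepsilon$ we have $v=\underline u-\frac{\Gamma u_B}{\lambda}>0$ by~\eqref{relationutilde2}.
\item On $r=s(t)$ we have $v=\overline u-\frac{\Gamma u_B}{\lambda}>0$.
\end{itemize}
Suppose $\min_{\overline{Q_T}} v<0$. This negative minimum would be attained at some point $(r_0,t_0)$ with $r_0$ strictly inside and $t_0>0$. There $v_{rr}\ge 0$ and $v_t\le 0$, hence
\[
0\ge \kappa v_t=v_{rr}-\lambda v>0,
\]
a contradiction. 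Therefore $v\ge 0$, which is~\eqref{prima0}.

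\emph{Upper bound~\eqref{prima4}.} Set $w:=\overline u-u$. It satisfies
\[
\kappa w_t=w_{rr}-\lambda w+(\lambda\overline u-\Gamma u_B),
\]
and the source term $\lambda\overline u-\Gamma u_B$ is positive by~\eqref{relationutilde2}. On the parabolic boundary:
\begin{itemize}
\item at $t=0$, $w\ge 0$ by~\eqref{u_0positiva};
\item on $r=\varepsilon$, $w=\overline u-\underline u>0$;
\item on $r=s(t)$, $w=0$.
\end{itemize}
At an interior negative minimum of $w$ we would have
\[
0\ge\kappa w_t=w_{rr}-\lambda w+(\lambda\overline u-\Gamma u_B)>0,
\]
again a contradiction. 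Hence $w\ge0$, which is~\eqref{prima4}.

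\emph{Main obstacle.} The delicate point is the moving domain and the possibility that $s(t)$ approaches $\varepsilon$. We would restrict to times where $s(t)>\varepsilon$, where the domain is a genuine non-cylindrical region with continuous lateral boundary. We would also handle a minimum attained on the lateral curve $r=s(t_0)$ at positive time: this is excluded because the boundary values there are prescribed. Finally, we use the standard fact that a first time of violation cannot occur at an interior point. For this we rely on the assumption made in the paper that solutions are classical up to the boundary.
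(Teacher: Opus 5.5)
Your proof is correct and is essentially the paper's argument: the paper compares $u$ with the constant barriers $\frac{\Gamma u_B}{\lambda}$ (an exact solution) and $\overline u$ (a strict supersolution, since $\lambda\overline u-\Gamma u_B>0$) using the parabolic comparison principle \cite[Corollary~2.5]{MR1465184}, whereas you carry out the underlying maximum-principle argument by hand on $u-\frac{\Gamma u_B}{\lambda}$ and $\overline u-u$. Your use of the cancellation coming from $\chi_0=2$ is harmless, since the paper fixes $\chi_0=2$ from Section~2 on, but it is also unnecessary: at an interior minimum $u_r=0$, so the drift term drops out for any $\chi_0$.
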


\begin{proof} 
We set
$$ Lu:=\kappa u_t-\Delta u
+\frac{\chi_0}{|x|^2}x\cdot \nabla u+\lambda u
-\Gamma u_B.$$

For any $\varepsilon\le r\le s(t)$ and~$t\ge0$, let $v(r,t):=\frac{\Gamma u_B}{\lambda}$. Then, recalling~\eqref{u_0positiva} and~\eqref{relationutilde2}, we have
\begin{alignat*}{2}
&L v=0=Lu
\qquad  &&\mbox{if } \varepsilon<r<s(t), \\
&v(\varepsilon,t)=\frac{\Gamma u_B}{\lambda}<\underline{u}=u(\varepsilon,t),\quad v(s(t),t)=\frac{\Gamma u_B}{\lambda}<\overline{u}=u(s(t),t) \qquad  &&\mbox{if } t>0, \\
\mbox{and} \quad &v(r,0)=\frac{\Gamma u_B}{\lambda}\le u_0(r)=u(r,0) \qquad  &&\mbox{if } \varepsilon<r<s(0).
\end{alignat*}
By using the comparison principle
(see e.g.~\cite[Corollary~2.5]{MR1465184})
we thus obtain~\eqref{prima0}.

Now, we consider the function~$z(r,t):=\overline{u}$ for any~$\varepsilon\le r\le s(t)$ and~$t\ge0$.
Thus, we have
\begin{alignat*}{2}
&Lz
=\lambda\overline{u}-\Gamma u_B> 0=Lu
\qquad  &&\mbox{if } \varepsilon<r<s(t), \\
& z(\varepsilon,t)=\overline u>u(\varepsilon,t),\quad z(s(t),t)=\overline u=u(s(t),t) \qquad  &&\mbox{if } t>0,
\\
\mbox{and} \quad & z(r,0)=\overline{u}\ge u_0(r)=u(r,0) \qquad  &&\mbox{if } \varepsilon<r<s(0).
\end{alignat*}As a consequence, employing the
comparison principle we obtain~\eqref{prima4}.
\end{proof}

We now use~\eqref{freeboundary} to derive useful upper and lower bounds for the radius $s(t)$ and its first derivative.

\begin{lemma}\label{lemmastimespunto}
Assume~\eqref{u_0positiva} and~\eqref{relationutilde2}.
Let $(u(r,t), s(t))$ be a solution of~\eqref{mainfinal1} and~\eqref{freeboundary}.

Then, for all $t>0$,
\begin{equation}\label{stimespunto}
-\mu\left(\widetilde u-\frac{\Gamma u_B}{\lambda}\right)(s(t)-\varepsilon)\leq \dot s(t)\leq \mu(\overline{u}-\widetilde u)(s(t)-\varepsilon)
\end{equation}
and
\begin{equation}\label{stimespunto2}
(s(0)-\varepsilon)e^{-\mu\left(\widetilde u-\frac{\Gamma u_B}{\lambda}\right)t} +\varepsilon\leq s(t)\leq (s(0)-\varepsilon)e^{\mu(\overline{u}-\widetilde u) t}+\varepsilon .
\end{equation}
\end{lemma}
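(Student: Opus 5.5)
The plan is to feed the pointwise bounds of Lemma~\ref{lemma0<u<utilde} directly into the free boundary condition~\eqref{freeboundary}, which reduces everything to an elementary integral estimate and a linear differential inequality. One point has to be settled first: the paper does not assume $s(t)>\varepsilon$ (see the footnote on page~\pageref{ATTEFOO223er}). I will therefore work with the integral over $[\varepsilon,\max\{s(t),\varepsilon\}]$ and recover $s(t)>\varepsilon$ as a byproduct of the argument.

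\textbf{Step 1: integral bounds.} By Lemma~\ref{lemma0<u<utilde}, for $\varepsilon\le r\le s(t)$ we have
\[
\frac{\Gamma u_B}{\lambda}-\widetilde u\le u(r,t)-\widetilde u\le \overline u-\widetilde u .
\]
By~\eqref{relationutilde2}, the lower bound is strictly negative and the upper bound is strictly positive. Integrating against $r^2$ on $[\varepsilon,s(t)]$ and using~\eqref{freeboundary} gives
\[
-\mu\Big(\widetilde u-\frac{\Gamma u_B}{\lambda}\Big)\frac{s^3-\varepsilon^3}{3}\le s^2\dot s\le \mu(\overline u-\widetilde u)\frac{s^3-\varepsilon^3}{3}.
\]
To reach~\eqref{stimespunto}, I divide by $s^2$ and use the elementary inequality
\[
\frac{s^3-\varepsilon^3}{3s^2}=\frac{(s-\varepsilon)(s^2+s\varepsilon+\varepsilon^2)}{3s^2}\le s-\varepsilon \qquad\text{for } s\ge\varepsilon,
\]
which holds because $s\varepsilon+\varepsilon^2\le 2s^2$. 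The factor $\frac{s^3-\varepsilon^3}{3s^2}$ is nonnegative and carries a sign-definite coefficient in each bound, so replacing it by the larger quantity $s-\varepsilon$ is legitimate on both sides. This yields~\eqref{stimespunto} for every $t$ with $s(t)\ge\varepsilon$.

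\textbf{Step 2: $s$ stays above $\varepsilon$.} I set $w(t):=s(t)-\varepsilon$, so that $w(0)>0$. As long as $w>0$, Step~1 gives $\dot w\ge -a w$ with $a:=\mu\big(\widetilde u-\frac{\Gamma u_B}{\lambda}\big)>0$. Hence $\frac{d}{dt}\big(e^{at}w\big)\ge 0$, and so $w(t)\ge w(0)e^{-at}>0$. A continuity argument now closes the loop. Let $T$ be the first time at which $w(T)=0$. On $[0,T)$ the bound gives $w\ge w(0)e^{-aT}>0$, and passing to the limit contradicts $w(T)=0$. Therefore $s(t)>\varepsilon$ for all $t$, the $\max$ in~\eqref{freeboundary} is harmless, and the lower bound in~\eqref{stimespunto2} follows.

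\textbf{Step 3: upper bound.} With $b:=\mu(\overline u-\widetilde u)>0$, the same Grönwall argument applied to $\dot w\le b w$ gives the upper bound in~\eqref{stimespunto2}.

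\textbf{Main obstacle.} The only delicate point is Step~2. Lemma~\ref{lemma0<u<utilde}, and hence Step~1, are formulated on the set $\{\varepsilon\le r\le s(t)\}$, so the differential inequality is only available while $s>\varepsilon$. The first-hitting-time argument is what avoids circular reasoning here. The rest is routine.
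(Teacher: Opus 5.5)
Your proposal is correct and follows the paper's proof essentially step for step. You plug the bounds $\frac{\Gamma u_B}{\lambda}\le u\le\overline u$ of Lemma~\ref{lemma0<u<utilde} into~\eqref{freeboundary}, use $\frac{s^2+\varepsilon s+\varepsilon^2}{3s^2}\le 1$, integrate the resulting linear differential inequalities, and use the exponential lower bound to rule out $s$ ever reaching $\varepsilon$. The only difference is that you run the continuation argument up to the first hitting time of $\varepsilon$, whereas the paper uses $T:=\sup\{t\ge 0:\ s(t)>\varepsilon\}$; your formulation is actually the cleaner way to guarantee $s>\varepsilon$ on $[0,T)$.
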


\begin{proof} We set $$
T:=\sup\{ t\in[0,+\infty) {\mbox{ s.t. }} s(t)>\varepsilon \}$$
and we remark that~$T\in(0,+\infty]$, since~$s(0)>\varepsilon$.

Our strategy is to establish~\eqref{stimespunto}
and~\eqref{stimespunto2} for all~$t\in[0,T)$. If we accomplish this,
then the first inequality in~\eqref{stimespunto2} yields that~$T=+\infty$,
hence giving~\eqref{stimespunto}
and~\eqref{stimespunto2} for all~$t\in[0,+\infty)$.

This approach is helpful, because for all~$t\in[0,T)$
the free boundary condition~\eqref{freeboundary} boils down to
\begin{equation}\label{QUEST}s^2(t)\dot s(t)=\mu\int_\varepsilon^{s(t)}\big(u(r,t)-\widetilde u\big)r^2\,dr.\end{equation}

Now,
thanks to Lemma~\ref{lemma0<u<utilde} we know that~$u\ge\frac{\Gamma u_B}{\lambda}$. Thus,
from~\eqref{QUEST} we deduce that, for all~$t\in[0,T)$,
\begin{equation*}\begin{split}&
\dot{s}(t)
=\frac{\mu}{s^2(t)}\int_\varepsilon^{s(t)}\big(u(r,t)-\widetilde u\big)r^2\,dr
\geq-\frac{\mu\left(\widetilde u-\frac{\Gamma u_B}{\lambda}\right)}{s^2(t)}\int_\varepsilon^{s(t)}r^2\,dr=
-\frac{\mu\left(\widetilde u-\frac{\Gamma u_B}{\lambda}\right)(s^3(t)-\varepsilon^3)}{3s^2(t)}\\&\qquad
=
-\mu\left(\widetilde u-\frac{\Gamma u_B}{\lambda}\right)(s(t)-\varepsilon)\frac{s^2(t)+\varepsilon s(t)+\varepsilon^2}{3s^2(t)}.\end{split}
\end{equation*}
Since~$\varepsilon<s(t)$, we have that
$$ \frac{s^2(t)+\varepsilon s(t)+\varepsilon^2}{3s^2(t)}=
\frac13\left(1+\frac{\varepsilon}{s(t)}+\frac{\varepsilon^2}{s^2(t)}\right)\le 1
$$ and therefore, for all~$t\in[0,T)$,
\begin{equation}\label{zxdrtfvghyuhnj1} \dot{s}(t)
\ge -\mu\left(\widetilde u-\frac{\Gamma u_B}{\lambda}\right)(s(t)-\varepsilon). \end{equation}

We observe that Lemma~\ref{lemma0<u<utilde} also tells us that~$u\le \overline{u}$. As a result,
using this information into~\eqref{QUEST} we get that, for all~$t\in[0,T)$,
\begin{equation*}
\begin{split}
&\dot{s}(t)\leq \frac{\mu(\overline{u}-\widetilde u)}{s^2(t)}\int_\varepsilon^{s(t)}r^2\,dr
=\frac{\mu(\overline{u}-\widetilde u)(s^3(t)-\varepsilon^3)}{3s^2(t)}
\\&\qquad=\frac{\mu}{3}(\overline{u}-\widetilde u)(s(t)-\varepsilon)\frac{s^2(t)+\varepsilon s(t)+\varepsilon^2}{s^2(t)}
\le \mu(\overline{u}-\widetilde u)(s(t)-\varepsilon).
\end{split}
\end{equation*}
Combining this and~\eqref{zxdrtfvghyuhnj1}, we obtain the desired result in~\eqref{stimespunto}, for all~$t\in[0,T)$.

Furthermore, by integrating~\eqref{stimespunto},  we deduce that
\[
-\mu\left(\widetilde u-\frac{\Gamma u_B}{\lambda}\right) t\leq \log(s(t)-\varepsilon)-\log(s(0)-\varepsilon)\leq \mu(\overline{u}-\widetilde u) t,
\]
which entails~\eqref{stimespunto2}, for all~$t\in[0,T)$, as desired.
\end{proof}

The next result will be useful later on in establishing the existence of a suitable lower bound for the radius~$s(t)$:

\begin{lemma}\label{lemmaT1}
Let $T_0>0$ and assume that~\eqref{relationutilde2} holds true. Let
\begin{equation}\label{gammadefn}
\gamma(t):=\overline{u}\, e^{-\frac{\lambda(t-T_0)}{\kappa}} \quad\mbox{ for any } t\ge T_0.
\end{equation}

Then, there exists $T_1>T_0$ such that
\[
\underline{u}-\widetilde u -\gamma(t)>0 \quad\mbox{ for any } t> T_1.
\]
Explicitly, one has
\[
T_1:= T_0 -\frac{\kappa}{\lambda}\log\left(\frac{\underline{u} -\widetilde u}{\overline{u}}\right).
\]
\end{lemma}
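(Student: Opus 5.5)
The statement is elementary: it only concerns the explicit exponentially decaying function $\gamma$ in~\eqref{gammadefn}. The plan is to solve the inequality $\gamma(t)<\underline{u}-\widetilde u$ directly by taking logarithms. The only structural input is~\eqref{relationutilde2}, which gives
\[
0<\underline{u}-\widetilde u<\overline{u}-\widetilde u<\overline{u}.
\]
Hence the ratio $\frac{\underline{u}-\widetilde u}{\overline{u}}$ lies in $(0,1)$, and its logarithm is well defined and strictly negative.

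First, I will check that $T_1>T_0$. Since $\log\big(\frac{\underline{u}-\widetilde u}{\overline{u}}\big)<0$ and $\kappa/\lambda>0$, the term $-\frac{\kappa}{\lambda}\log\big(\frac{\underline{u}-\widetilde u}{\overline{u}}\big)$ is strictly positive. This gives $T_1>T_0$.

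Next, I will note that $\gamma$ is strictly decreasing on $[T_0,+\infty)$, because $\lambda,\kappa>0$. Then I will compute $\gamma(T_1)$ directly:
\[
\gamma(T_1)=\overline{u}\,\exp\Big(\log\Big(\frac{\underline{u}-\widetilde u}{\overline{u}}\Big)\Big)=\underline{u}-\widetilde u.
\]
Consequently, for every $t>T_1$, strict monotonicity gives $\gamma(t)<\gamma(T_1)=\underline{u}-\widetilde u$, which is the claim.

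Alternatively, one can run the same chain of equivalences without monotonicity. The inequality $\overline{u}e^{-\lambda(t-T_0)/\kappa}<\underline{u}-\widetilde u$ is equivalent, after dividing by $\overline{u}>0$ and taking logarithms, to $-\frac{\lambda}{\kappa}(t-T_0)<\log\frac{\underline{u}-\widetilde u}{\overline{u}}$. This in turn is equivalent to $t>T_1$.

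There is no real obstacle here. The only point needing care is positivity: we need $\underline{u}-\widetilde u>0$ for the logarithm to exist, and $\underline{u}-\widetilde u<\overline{u}$ to get $T_1>T_0$. Both follow from~\eqref{relationutilde2} together with $\widetilde u>0$.
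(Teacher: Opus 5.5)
Your proposal is correct and matches the paper's proof. The paper uses \eqref{relationutilde} to place $\frac{\underline{u}-\widetilde u}{\overline{u}}$ in $(0,1)$ and then solves $\gamma(t)<\underline{u}-\widetilde u$ by taking logarithms, exactly as in your alternative chain of equivalences; your monotonicity argument and the explicit check that $T_1>T_0$ are harmless additions.
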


\begin{proof}
In light of assumption~\eqref{relationutilde}, we see that
\[
0<\frac{\underline{u} -\widetilde u}{\overline{u}}<1.
\]
From this and~\eqref{gammadefn}, we deduce that $\underline{u}-\widetilde u -\gamma(t)>0$ if and only if
\[
t>T_0 -\frac{\kappa}{\lambda}\log\left(\frac{\underline{u} -\widetilde u}{\overline{u}}\right),
\] from which the desired result follows.
\end{proof}

The aim is now to construct a suitable barrier to be employed in the proof of Theorem~\ref{th:main1}. To accomplish this goal, for all~$M>0$ (that will be specified later on in formula~\eqref{vcew3495863tfhfaefojgews}) we define,
for all~$\varepsilon <r\le s(t)$ and~$t>0$,
\begin{equation}\label{vdefinition}
v(r, t):= \frac{\left(\overline{u}-\frac{\Gamma u_B}{\lambda}\right)\sinh(M(r-\varepsilon)) +\left(\underline{u}-\frac{\Gamma u_B}{\lambda}\right)\,\sinh(M(s(t)-r))}{\sinh(M(s(t)-\varepsilon))}+\frac{\Gamma u_B}{\lambda}.
\end{equation}
The function~$v(r, t)$ solves a suitable differential equation, according to the next result:

\begin{lemma}\label{lemmasoluzionev}
Let $M>0$. Let $v(r, t)$ be defined as in~\eqref{vdefinition}.

Then, for~$\varepsilon<r\le s(t)$ and~$t>0$,
\[
\frac{\partial^2v}{\partial r^2}(r, t) - M^2 v(r, t)+\frac{M^2\Gamma u_B}{\lambda}=0.
\]

Moreover,
\begin{eqnarray*}
v(\varepsilon, t)=\underline u
\qquad{\mbox{and}}\qquad v(s(t),t)=\overline u.
\end{eqnarray*}
\end{lemma}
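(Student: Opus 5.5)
The argument is a direct computation on the explicit formula \eqref{vdefinition}, treating $t$ as a fixed parameter. For fixed $t>0$ the quantity $s(t)$ is a constant larger than $\varepsilon$; this is guaranteed by Lemma~\ref{lemmastimespunto}, see \eqref{stimespunto2}. Hence the denominator $\sinh(M(s(t)-\varepsilon))$ is a strictly positive constant in $r$, and $v(\cdot,t)$ is a smooth function of $r$ on $(\varepsilon,s(t)]$. The only differentiation needed is in $r$, which avoids any issue with the regularity of $s$ in time.

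For the differential equation, set $w(r,t):=v(r,t)-\frac{\Gamma u_B}{\lambda}$. Then $w$ is a linear combination, with coefficients independent of $r$, of $\sinh(M(r-\varepsilon))$ and $\sinh(M(s(t)-r))$. Each of these satisfies $f''=M^2 f$, since $\frac{d^2}{dr^2}\sinh(M(r-\varepsilon))=M^2\sinh(M(r-\varepsilon))$ and, by the chain rule, the same holds for $\sinh(M(s(t)-r))$ because the factor $(-M)^2=M^2$. By linearity, $\partial_r^2 w=M^2 w$. Since $\partial_r^2 v=\partial_r^2 w$, this gives $\partial_r^2 v-M^2 v+M^2\frac{\Gamma u_B}{\lambda}=0$. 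This is exactly the computation in Proposition~\ref{propsoluzionestazionaria}, with $\sqrt\lambda$ replaced by $M$ and $R_0$ replaced by $s(t)$.

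For the boundary values, evaluate \eqref{vdefinition} at the endpoints; since $r=\varepsilon$ lies outside the stated range of definition, the value there is the limit as $r\to\varepsilon^+$. At $r=\varepsilon$ the first sine vanishes and the second equals the denominator, so the fraction reduces to $\underline u-\frac{\Gamma u_B}{\lambda}$ and $v(\varepsilon,t)=\underline u$. At $r=s(t)$ the second sine vanishes and the first equals the denominator, so $v(s(t),t)=\overline u$.

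No real obstacle is expected. The only point requiring care is that $s(t)>\varepsilon$, so that the denominator does not vanish, and this is supplied by Lemma~\ref{lemmastimespunto}.
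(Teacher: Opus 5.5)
Your proposal is correct and matches the paper's proof, which likewise differentiates \eqref{vdefinition} twice in $r$ to get $\partial_r^2 v = M^2\bigl(v-\frac{\Gamma u_B}{\lambda}\bigr)$ and reads the boundary values directly off the formula. One small remark: citing Lemma~\ref{lemmastimespunto} for $s(t)>\varepsilon$ brings in that lemma's standing hypotheses, whereas the definition \eqref{vdefinition} (on $\varepsilon<r\le s(t)$) already presupposes $s(t)>\varepsilon$ by itself.
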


\begin{proof}
By inspection, one has
\[
\begin{aligned}
\frac{\partial^2v}{\partial r^2}(r, t)&=\frac{M^2\left(\left(\overline{u}-\frac{\Gamma u_B}{\lambda}\right)\sinh(M(r-\varepsilon)) +\left(\underline{u}-\frac{\Gamma u_B}{\lambda}\right)\sinh(M(s(t)-r))\right)}{\sinh(M(s(t)-\varepsilon))}\\
& = M^2\left( v(r, t)-\frac{\Gamma u_B}{\lambda}\right).
\end{aligned}
\]
This provides the desired result.
\end{proof}

\section{Proof of Theorem~\ref{th:main1}}\label{mkljnbgvyftcdrxsedctfvygbhunijm}

The proof of Theorem~\ref{th:main1} unfolds in several technical steps which are outlined in detail here below.

We start by proving that, regardless of the initial chosen time, there always exists a later time at which the size of~$s(t)$ exceeds a specified threshold.
To give an estimate for this threshold, we denote by $C$ a constant such that, for any $\eta \in (0, 1]$,
\begin{equation}\label{vbuewi58221wsd4trg7uij}
\begin{split}
&|\sinh(\eta)-\eta| \leq C\eta^3,\\
&|\cosh(\eta)-1|\leq C\eta^2,\\
&|\mbox{csch}(\eta)-\frac1{\eta}|\leq C\eta\\
{\mbox{and }}\quad &|\coth(\eta)-\frac1{\eta}| \leq C \eta,
\end{split}
\end{equation}
and set
\begin{equation}\label{vcew3495863tfhfaefojgews}
M:=\sqrt{\dfrac{\lambda\underline{u} + 2\kappa\mu(\overline{u}-\underline{u})(\widetilde u-\frac{\Gamma u_B}{\lambda})}{\underline{u}-\frac{\Gamma u_B}{\lambda}}}.
\end{equation}
The precise statement goes as follows:

\begin{proposition}\label{propstep1}
Assume that \eqref{u_0positiva} and \eqref{relationutilde2} hold true. Let $(u(r, t),s(t))$ be a solution of~\eqref{mainfinal1} and~\eqref{freeboundary} and let~$R_0$
be as in~\eqref{drftguhikl}.

Then, there exists $\delta_0>0$ such that, for any $t>0$, there exists $t_1>t$ satisfying
\begin{equation}\label{step1}
s(t_1)>\varepsilon+\delta_0.
\end{equation}
Explicitly, one has
\begin{equation}\label{delta0esplicita}
\begin{aligned}
\delta_0&:=\min\Bigg\{ \frac{R_0-\varepsilon}{2},\frac{1}{M},
{
\frac{1}{2M}\left(\frac{(\underline u-\widetilde u)(1-e^{-1})}{C\left(\overline u+\underline u -\frac{\Gamma u_B}{\lambda}\right)}\right)^\frac{1}{2},} \\
&\frac{1}{2M}{\scriptstyle \left(\frac{\kappa\mu\left(\widetilde u-\frac{\Gamma u_B}{\lambda}\right)(\overline{u}-\underline{u})}{C\left[\kappa\mu(\overline u-\widetilde u)\left(2\overline u+3\underline u-5\frac{\Gamma u_B}{\lambda} \right)
+\kappa\mu\left(\widetilde u-\frac{\Gamma u_B}{\lambda}\right)(\overline{u}-\underline{u})
+\kappa C\mu(\overline u-\widetilde u)\left(2\overline u+3\underline u-5\frac{\Gamma u_B}{\lambda} \right)+2(M^2-\lambda)\left(\overline u+\underline u -\frac{\Gamma u_B}{\lambda}\right)
\right]}\right)^\frac{1}{2}}, \\
&\frac{1}{2M}{\scriptstyle \left(\frac{\kappa\mu\left(\widetilde u-\frac{\Gamma u_B}{\lambda}\right)(\overline{u}-\underline{u})}{\kappa C\mu\left(\widetilde u-\frac{\Gamma u_B}{\lambda}\right)\left[\left(3\overline u+2\underline u-5\frac{\Gamma u_B}{\lambda}\right) 
+ C\left(3\overline u+4\underline u-7\frac{\Gamma u_B}{\lambda} \right)+ C^2\left(\overline u+\underline u-2\frac{\Gamma u_B}{\lambda} \right)\right]+2C(M^2-\lambda)\left(\overline u+\underline u-\frac{\Gamma u_B}{\lambda} \right)}\right)^\frac{1}{2}}
\Bigg\},
\end{aligned}
\end{equation}
where $C$ is as in \eqref{vbuewi58221wsd4trg7uij} and $M$ is defined in \eqref{vcew3495863tfhfaefojgews}.
\end{proposition}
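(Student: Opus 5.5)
We argue by contradiction. Suppose there is $t_0>0$ such that $s(t)\le\varepsilon+\delta_0$ for all $t\ge t_0$. Since Lemma~\ref{lemmastimespunto} gives $s(t)>\varepsilon$, the tumor rim is then trapped in a thin shell of width at most $\delta_0\le \frac1M$. The plan is to show that, after a waiting time, $u$ is so close to $\underline u$ and $\overline u$ throughout this thin rim that the integrand in \eqref{freeboundary} becomes positive. This forces $\dot s$ to be bounded below by a positive multiple of $s-\varepsilon$, so $s$ grows exponentially and eventually exceeds $\varepsilon+\delta_0$, which is the contradiction. The restriction $\delta_0\le (R_0-\varepsilon)/2$ is natural here: the stationary analysis in Proposition~\ref{propabcsPI} shows that below $R_0$ the free boundary integral of the stationary profile is positive, since $m<0$ near $\varepsilon$. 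The dynamic argument mimics this.

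The first step is a comparison argument with a subsolution built from the barrier $v$ of \eqref{vdefinition}, perturbed by the decaying term $\gamma(t)$ of Lemma~\ref{lemmaT1}. I would take $w(r,t):=v(r,t)-\gamma(t)\,\phi(r)$, or more simply $w:=v-\gamma$ on $[t_0,\infty)$. The decay $\gamma(t)=\overline u e^{-\lambda(t-T_0)/\kappa}$ absorbs the initial discrepancy, since $w(\cdot,t_0)\le \overline u-\overline u\le u$ wherever $v\le\overline u$.

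To check that $Lw\le 0$, where $L$ is the operator of Lemma~\ref{lemma0<u<utilde} and $\chi_0=2$ gives $\Delta u-\frac{2}{r}u_r=u_{rr}$, I would use Lemma~\ref{lemmasoluzionev}. It gives $-w_{rr}+\lambda w-\Gamma u_B=(\lambda-M^2)(v-\tfrac{\Gamma u_B}{\lambda})-\lambda\gamma$. The term $\kappa v_t=\kappa\,\partial_s v\,\dot s$ is controlled using \eqref{stimespunto}: $|\dot s|\le \mu\max\{\overline u-\widetilde u,\widetilde u-\frac{\Gamma u_B}{\lambda}\}(s-\varepsilon)$. The quantity $\partial_s v$ is estimated through the Taylor bounds \eqref{vbuewi58221wsd4trg7uij}, valid because $M(s-\varepsilon)\le M\delta_0\le1$. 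The definition \eqref{vcew3495863tfhfaefojgews} of $M$, with its $2\kappa\mu(\overline u-\underline u)(\widetilde u-\frac{\Gamma u_B}{\lambda})$ term, is chosen precisely so that $(M^2-\lambda)(v-\frac{\Gamma u_B}{\lambda})$ beats $\kappa v_t$. The $-\lambda\gamma$ term has the right sign as well, since $-\kappa\gamma'=\lambda\gamma$. The last two smallness conditions in \eqref{delta0esplicita} are what make the remaining error terms, of order $CM^2\delta_0^2$, absorbable. On the lateral boundaries we have $w\le v=u$. The comparison principle then yields $u\ge v-\gamma$ for $t\ge t_0$. Doing the analogous construction from above, or simply using $u\ge\frac{\Gamma u_B}{\lambda}$ together with the lower bound, should suffice.

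The second step is to insert this bound into \eqref{freeboundary}. For $t>T_1$, where $T_1$ is given by Lemma~\ref{lemmaT1} with $T_0=t_0$, we have $\underline u-\widetilde u-\gamma>0$. For small $M(s-\varepsilon)$, $v$ is essentially the linear interpolation between $\underline u$ and $\overline u$ up to an error of order $C M^2\delta_0^2(\overline u+\underline u-\frac{\Gamma u_B}{\lambda})$. The third condition in \eqref{delta0esplicita}, which involves $(\underline u-\widetilde u)(1-e^{-1})$, guarantees that $v-\gamma-\widetilde u\ge c_0>0$ on the rim for all $t\ge T_1+1$ (roughly). Hence $s^2\dot s\ge \mu c_0\frac{s^3-\varepsilon^3}{3}$, which gives $\dot s\ge c(s-\varepsilon)$ and hence exponential growth, contradicting $s\le\varepsilon+\delta_0$.

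The main obstacle is the first step: verifying $Lw\le0$ with the moving boundary. Here $\partial_s v$ has a singular-looking structure, ratios of $\sinh$ terms, and the sign of $\dot s$ is not known a priori. One must expand carefully with the constants $C$ and carry out explicit bookkeeping so that the errors match the thresholds in \eqref{delta0esplicita}.
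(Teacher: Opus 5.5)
Your plan follows the paper's architecture. It uses the same barrier $v$ with the same $M$ and the same decaying correction $\gamma$ switched on at $t_0$. It derives the comparison $u\ge v-\gamma$ for $t\ge t_0$ (the paper's $w=u-v+\gamma>0$) and inserts it into \eqref{freeboundary} after a waiting time. One step, however, fails as you wrote it: controlling $\kappa v_t$ through the two-sided bound $|\dot s|\le\mu\max\{\overline u-\widetilde u,\widetilde u-\frac{\Gamma u_B}{\lambda}\}(s-\varepsilon)$. Since $\partial_s v\approx-(\overline u-\underline u)\frac{r-\varepsilon}{(s-\varepsilon)^2}$, this only gives $\kappa v_t\lesssim\kappa\mu\max\{\dots\}(\overline u-\underline u)$. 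The prescribed $M$ supplies only $(M^2-\lambda)(\underline u-\frac{\Gamma u_B}{\lambda})=2\kappa\mu(\overline u-\underline u)(\widetilde u-\frac{\Gamma u_B}{\lambda})+\Gamma u_B$, and at most $\frac{\overline u-\Gamma u_B/\lambda}{\underline u-\Gamma u_B/\lambda}$ times this near $r=s$. When $\overline u-\widetilde u\gg\widetilde u-\frac{\Gamma u_B}{\lambda}$ and $\Gamma u_B$ is small, $L(v-\gamma)\le0$ does not follow. So $M$ is \emph{not} calibrated against $|\dot s|$.

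What makes the given $M$ work is a one-sided structure. Differentiating in $s$ the boundary value problem of Lemma~\ref{lemmasoluzionev} gives $\partial_s v=-\partial_r v(s,t)\,\frac{\sinh(M(r-\varepsilon))}{\sinh(M(s-\varepsilon))}\le0$, because $\partial_r v(s,t)>0$. Hence $v_t\le0$ whenever $\dot s\ge0$. When $\dot s<0$, only the lower bound in \eqref{stimespunto} enters, giving $\kappa v_t\le\kappa\mu(\widetilde u-\frac{\Gamma u_B}{\lambda})(\overline u-\underline u)+O(M^2\delta_0^2)$. The factor $2$ in $M^2$ beats this with margin, and the $O(M^2\delta_0^2)$ remainders are absorbed by the smallness conditions in \eqref{delta0esplicita}. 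This is exactly the paper's case split $\dot s\ge0$ versus $\dot s<0$, carried out there with the Taylor bounds \eqref{vbuewi58221wsd4trg7uij}. You flagged the unknown sign of $\dot s$ as the obstacle; its resolution is this sign information, not a bound on $|\dot s|$.

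Two smaller corrections. First, $\kappa\dot\gamma+\lambda\gamma=0$ makes $\gamma$ cancel from the interior inequality altogether, so its only role is at $t=t_0$; it does not contribute a $-\lambda\gamma$ of helpful sign. Second, the waiting time should be $T_1+\kappa/\lambda$ rather than $T_1+1$. The reason is that $\gamma(T_1+\kappa/\lambda)=(\underline u-\widetilde u)e^{-1}$ is what the third entry of \eqref{delta0esplicita} is calibrated to.

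With that repaired, your ending is genuinely better than the paper's. After obtaining $\dot s>0$ for large $t$, the paper appeals to an external convergence theorem towards the stationary profile and to Proposition~\ref{propabcsPI} to get $\lim s(t)\ge R_0$. You instead read $\dot s\ge c\,(s-\varepsilon)$ directly off $s^2\dot s\ge\mu c_0(s^3-\varepsilon^3)/3$, so $s-\varepsilon$ grows exponentially and leaves $(0,\delta_0]$. This is self-contained, and the entry $(R_0-\varepsilon)/2$ in $\delta_0$ is then never used (harmlessly). Likewise, you only need $L(v-\gamma)\le0$ for the inhomogeneous operator, which is weaker by $\Gamma u_B$ than the paper's requirement $\kappa v_t-\partial_r^2v+\lambda v<0$.
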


\begin{proof}
Assume by contradiction that this is not the case, namely that for  $\delta_0$ as in \eqref{delta0esplicita} there exists $t_0>0$ such that
\begin{equation}\label{contrstep1}
s(t)\le\varepsilon+\delta_0 \quad\mbox{ for any } t\ge t_0.
\end{equation}

We point out that, by~\eqref{relationutilde2} and \eqref{vcew3495863tfhfaefojgews},
\begin{equation}\label{98765ujhygfhueoirshyor}\begin{split}&
M^2-\lambda=\frac{\lambda\underline{u} + 2\kappa\mu(\overline{u}-\underline{u})(\widetilde u-\frac{\Gamma u_B}{\lambda})}{\underline{u}-\frac{\Gamma u_B}{\lambda}}-\lambda
\\&\qquad=\frac{  2\kappa\mu(\overline{u}-\underline{u})(\widetilde u-\frac{\Gamma u_B}{\lambda})+\Gamma u_B}{\underline{u}-\frac{\Gamma u_B}{\lambda}}
>0.
\end{split}
\end{equation}
hen, we consider the function~$v(r, t)$ as in~\eqref{vdefinition}. Thus, by Lemma~\ref{lemmasoluzionev}, we get
\begin{equation}\label{mnbvc09876iuytrewjhgf0}
\kappa v_t(r, t)- \partial^2_r v(r, t) +\lambda v(r, t) = \kappa v_t(r, t)-(M^2-\lambda) v(r, t)+\frac{M^2\Gamma u_B}{\lambda}.
\end{equation}

Now we observe that
\begin{equation}\label{vtspuntocambiasegno}
\begin{split}&
v_t(r, t)\\=\; &\frac{\left(\underline{u}-\frac{\Gamma u_B}{\lambda}\right)M\dot s(t)\cosh(M(s(t)-r))}{\sinh(M(s(t)-\varepsilon))} \\
& - \frac{\left[\left(\overline{u}-\frac{\Gamma u_B}{\lambda}\right)\sinh(M(r-\varepsilon)) + \left(\underline{u}-\frac{\Gamma u_B}{\lambda}\right)\,\sinh(M(s(t)-r))\right]M\dot s(t)\cosh(M(s(t)-\varepsilon))}{\sinh^2(M(s(t)-\varepsilon))}  \\
=\;&M\dot s(t)\,\mbox{csch}(M(s(t)-\varepsilon))\Bigg[\left(\underline{u}-\frac{\Gamma u_B}{\lambda}\right)\cosh(M(s(t)-r)) \\
&  -\left(\left(\overline{u}  -\frac{\Gamma u_B}{\lambda}\right)\sinh(M(r-\varepsilon))+\left(\underline{u}-\frac{\Gamma u_B}{\lambda}\right)\,\sinh(M(s(t)-r))\right)\coth(M(s(t)-\varepsilon))\Bigg] .
\end{split}
\end{equation}
We distinguish two cases, either $\dot s (t)\geq 0$ or $\dot s(t)<0$. We deal with the first case.
 Accordingly, by \eqref{delta0esplicita} and \eqref{vbuewi58221wsd4trg7uij}, from \eqref{vtspuntocambiasegno} we get 
\[
\begin{split}&
v_t(r, t)\\\leq\;&M\dot s(t)\mbox{csch}(M(s(t)-\varepsilon))\\&
\times\Bigg[\left(\underline{u}-\frac{\Gamma u_B}{\lambda}\right)\big(
1 + CM^2(s(t)-r)^2
\big) \\
&  \quad-\left(\left(\overline{u}  -\frac{\Gamma u_B}{\lambda}\right)
\big(M(r-\varepsilon) -CM^3(r-\varepsilon)^3
\big)+\left(\underline{u}-\frac{\Gamma u_B}{\lambda}\right)
\big(M(s(t)-r)-CM^3(s(t)-r)^3 )\big)\right)
\\
&\qquad\quad\times\left(
\frac1{M(s(t)-\varepsilon)} -CM(s(t)-\varepsilon)\right)
\Bigg]\\
=\;&M\dot s(t)\mbox{csch}(M(s(t)-\varepsilon))\\&
\times\Bigg[\underline{u}-\frac{\Gamma u_B}{\lambda}-
 \left(\overline{u}  -\frac{\Gamma u_B}{\lambda}\right)\frac{r-\varepsilon}{s(t)-\varepsilon}
-\left(\underline{u}-\frac{\Gamma u_B}{\lambda}\right)\frac{s(t)-r}{s(t)-\varepsilon} \\
&+CM^2 \left(\underline{u}  -\frac{\Gamma u_B}{\lambda}\right)\left((s(t)-r)^2+(s(t)-r)(s(t)-\varepsilon)+\frac{(s(t)-r)^3}{s(t)-\varepsilon}\right) \\
&+CM^2 \left(\overline{u}  -\frac{\Gamma u_B}{\lambda}\right)\left((r-\varepsilon)(s(t)-\varepsilon)+\frac{(r-\varepsilon)^3}{s(t)-\varepsilon}\right) \\
&-C^2M^4\left(\left(\overline{u}  -\frac{\Gamma u_B}{\lambda}\right) (r-\varepsilon)^3(s(t)-\varepsilon)+ \left(\underline{u}  -\frac{\Gamma u_B}{\lambda}\right)(s(t)-r)^3(s(t)-\varepsilon)\right)
\Bigg].
\end{split}
\]
Recalling that $\varepsilon\leq r \leq s(t)$, we have
\[
\begin{aligned}
v_t(r,t)&\leq M\dot s(t)\mbox{csch}(M(s(t)-\varepsilon))\Bigg[-\frac{(\overline u -\underline u)(r-\varepsilon)}{s(t)-\varepsilon} +CM^2(s(t)-\varepsilon)^2 \left(2\overline u+3\underline u-5\frac{\Gamma u_B}{\lambda} \right)
\Bigg]
\end{aligned}
\]
From Lemma~\ref{lemmastimespunto} and \eqref{vbuewi58221wsd4trg7uij},
 we can write
\begin{equation}\label{mnbvc09876iuytrewjhgf}
\begin{split}
v_t(r, t)&\leq M\mu \left(\widetilde u -\frac{\Gamma u_B}{\lambda} \right)(\overline u-\underline u)(s(t)-\varepsilon)\mbox{csch}(M(s(t)-\varepsilon)) \\
&\quad +CM^3\mu (\overline u -\widetilde u)\left(2\overline u+3\underline u-5\frac{\Gamma u_B}{\lambda} \right)(s(t)-\varepsilon)^3\mbox{csch}(M(s(t)-\varepsilon)) \\
&=\mu\left(\widetilde u -\frac{\Gamma u_B}{\lambda} \right)(\overline u-\underline u)\\
&\quad+CM^2\mu\left[(\overline u-\widetilde u)\left(2\overline u+3\underline u-5\frac{\Gamma u_B}{\lambda} \right)+ \left(\widetilde u -\frac{\Gamma u_B}{\lambda} \right)(\overline u-\underline u)\right](s(t)-\varepsilon)^2 \\
&\quad +C^2M^4\mu(\overline u-\widetilde u)\left(2\overline u+3\underline u-5\frac{\Gamma u_B}{\lambda} \right)(s(t)-\varepsilon)^4.
\end{split}
\end{equation}

Moreover,
\begin{eqnarray*}
&&  v(r, t)-\frac{\Gamma u_B}{\lambda}
=
\frac{\left(\overline{u}-\frac{\Gamma u_B}{\lambda}\right)\sinh(M(r-\varepsilon)) + \left(\underline{u}-\frac{\Gamma u_B}{\lambda}\right)\,\sinh(M(s(t)-r))}{\sinh(M(s(t)-\varepsilon))}.
\end{eqnarray*}
Thus, by~\eqref{vbuewi58221wsd4trg7uij},
\begin{eqnarray*}
&&  v(r, t)-\frac{\Gamma u_B}{\lambda}
\\&=&
\left[\left(\overline{u}-\frac{\Gamma u_B}{\lambda}\right) \big(M(r-\varepsilon)-CM^3(r-\varepsilon)^3)\big) 
+ \left(\underline{u}-\frac{\Gamma u_B}{\lambda}\right)\big(M(s(t)-r)-CM^3(s(t)-r)^3)
\big)\right]\\&&\qquad\times
\left(\frac1{M(s(t)-\varepsilon)}
-CM(s(t)-\varepsilon)\right)\\
&\geq & 
\left[\overline{u}M(r-\varepsilon) +\underline{u}M(s(t)-r)-
\frac{M\Gamma u_B}{\lambda}(s(t)-\varepsilon)-CM^3\left(\overline u+\underline u -2\frac{\Gamma u_B}{\lambda}\right)(s(t)-\varepsilon)^3
\right]\\&&\qquad\times
\left(\frac1{M(s(t)-\varepsilon)}
-CM(s(t)-\varepsilon)\right)\\&=&
\frac{\overline{u}M(r-\varepsilon) +\underline{u}M(s(t)-r)}{M(s(t)-\varepsilon)}
-\frac{\Gamma u_B}{\lambda} -CM^2(\overline u (r-\varepsilon)(s(t)-\varepsilon)+\underline u(s(t)-r)(s(t)-\varepsilon))\\
&&\quad -CM^2\left(\overline u+\underline u -2\frac{\Gamma u_B}{\lambda}\right)(s(t)-\varepsilon)^2+C^2M^4\left(\overline u+\underline u -2\frac{\Gamma u_B}{\lambda}\right)(s(t)-\varepsilon)^4 \\
&\ge&
\underline{u}
-\frac{\Gamma u_B}{\lambda} -2CM^2\left(\overline u+\underline u -\frac{\Gamma u_B}{\lambda}\right)(s(t)-\varepsilon)^2+C^2M^4\left(\overline u+\underline u -2\frac{\Gamma u_B}{\lambda}\right)(s(t)-\varepsilon)^4
.
\end{eqnarray*}
Therefore,
\begin{equation}\label{758gfkjhgewaqy5t38ufgggggg8765}
v(r, t)\ge \underline{u}-2CM^2\left(\overline u+\underline u -\frac{\Gamma u_B}{\lambda}\right)(s(t)-\varepsilon)^2+C^2M^4\left(\overline u+\underline u -2\frac{\Gamma u_B}{\lambda}\right)(s(t)-\varepsilon)^4.
\end{equation}

Plugging this and~\eqref{mnbvc09876iuytrewjhgf} into~\eqref{mnbvc09876iuytrewjhgf0}, and recalling~\eqref{98765ujhygfhueoirshyor} and \eqref{delta0esplicita},
we conclude that
\begin{eqnarray*}&&
\kappa v_t(r, t)- \partial^2_r v(r, t) +\lambda v(r, t) \\&\le& 
\kappa\mu\left(\widetilde u-\frac{\Gamma u_B}{\lambda} \right)(\overline{u}-\underline{u}) -\underline{u}(M^2-\lambda)+\frac{M^2\Gamma u_B}{\lambda}\\
&&\quad+\kappa CM^2\mu\left[(\overline u-\widetilde u)\left(2\overline u+3\underline u-5\frac{\Gamma u_B}{\lambda} \right)+\left(\widetilde u-\frac{\Gamma u_B}{\lambda} \right)(\overline{u}-\underline{u})\right](s(t)-\varepsilon)^2 \\
&&\quad +\kappa C^2M^4\mu(\overline u-\widetilde u)\left(2\overline u+3\underline u-5\frac{\Gamma u_B}{\lambda} \right)(s(t)-\varepsilon)^4
\\
&&\quad+(M^2-\lambda)\left(2CM^2\left(\overline u+\underline u -\frac{\Gamma u_B}{\lambda}\right)(s(t)-\varepsilon)^2-C^2M^4\left(\overline u+\underline u -2\frac{\Gamma u_B}{\lambda}\right)(s(t)-\varepsilon)^4 \right) \\
&\leq&
\kappa\mu\left(\widetilde u-\frac{\Gamma u_B}{\lambda} \right)(\overline{u}-\underline{u}) 
-   2\kappa\mu\left(\widetilde u-\frac{\Gamma u_B}{\lambda} \right)(\overline{u} -\widetilde u)  \\
&&\quad+\kappa CM^2\mu\left[(\overline u-\widetilde u)\left(2\overline u+3\underline u-5\frac{\Gamma u_B}{\lambda} \right)+\left(\widetilde u-\frac{\Gamma u_B}{\lambda} \right)(\overline{u}-\underline{u})\right](s(t)-\varepsilon)^2 \\
&&\quad +\kappa C^2M^4\mu(\overline u-\widetilde u)\left(2\overline u+3\underline u-5\frac{\Gamma u_B}{\lambda} \right)(s(t)-\varepsilon)^4 \\
&&\quad +2CM^2(M^2-\lambda)\left(\overline u+\underline u -\frac{\Gamma u_B}{\lambda}\right)(s(t)-\varepsilon)^2
\\&\leq&-\kappa\mu\left(\widetilde u-\frac{\Gamma u_B}{\lambda} \right)(\overline{u}-\underline{u}) +CM^2\Bigg[\kappa\mu(\overline u-\widetilde u)\left(2\overline u+3\underline u-5\frac{\Gamma u_B}{\lambda} \right)+\kappa \mu \left(\widetilde u-\frac{\Gamma u_B}{\lambda} \right)(\overline{u}-\underline{u}) \\
&&\quad +\kappa C\mu(\overline u-\widetilde u)\left(2\overline u+3
\underline u-5\frac{\Gamma u_B}{\lambda} \right)+2(M^2-\lambda)\left(\overline u+\underline u -\frac{\Gamma u_B}{\lambda}\right)
\Bigg](s(t)-\varepsilon)^2 .
\end{eqnarray*}
As a consequence, taking $\delta_0$ as in \eqref{delta0esplicita}, we find that
\begin{equation}\label{veqn<01}
\kappa v_t(r, t)- \partial^2_r v(r, t) +\lambda v(r, t) <0
\quad \mbox{for~$ \varepsilon<r<s(t)$ and~$t\ge t_0$ with }\dot s(t)\geq 0.
\end{equation}

We now deal with the case $\dot s(t)< 0$. From \eqref{vtspuntocambiasegno}, we now have
\[
\begin{aligned}
&v_t(r, t)\leq M(-\dot s(t))\mbox{csch}(M(s(t)-\varepsilon))\\
& \quad
\times\Bigg[\left(\underline{u}-\frac{\Gamma u_B}{\lambda}\right)\big(
-1 + CM^2(s(t)-r)^2
\big) \\
&  \quad+\left(
 \left(\overline{u}  -\frac{\Gamma u_B}{\lambda}\right)
\big(M(r-\varepsilon) +CM^3(r-\varepsilon)^3
\big)+\left(\underline{u}-\frac{\Gamma u_B}{\lambda}\right)
\big(M(s(t)-r)+CM^3(s(t)-r)^3 )\big)\right)
\\
&\quad\times\left(
\frac1{M(s(t)-\varepsilon)} +CM(s(t)-\varepsilon)\right)
\Bigg] \\ 
&\leq M(-\dot s(t))\mbox{csch}(M(s(t)-\varepsilon))\Bigg[\frac{(\overline u-\underline u)(r-\varepsilon)}{s(t)-\varepsilon}
+CM^2\left(2\overline u+3\underline u-5\frac{\Gamma u_B}{\lambda}\right)(s(t)-\varepsilon)^2 \\
&\quad +C^2M^4\left(\overline u+\underline u-2\frac{\Gamma u_B}{\lambda}\right)(s(t)-\varepsilon)^4
\Bigg] \\
&\leq  \mu\left(\widetilde u-\frac{\Gamma u_B}{\lambda} \right)(\overline u-\underline u)
+CM^2\mu\left(\widetilde u-\frac{\Gamma u_B}{\lambda} \right)\left(3\overline u+2\underline u-5\frac{\Gamma u_B}{\lambda} \right)(s(t)-\varepsilon)^2 \\
&\quad +C^2M^4\mu\left(\widetilde u-\frac{\Gamma u_B}{\lambda} \right)\left(3\overline u+4\underline u-7\frac{\Gamma u_B}{\lambda} \right)(s(t)-\varepsilon)^4 \\
&\quad+C^3M^6\mu\left(\widetilde u-\frac{\Gamma u_B}{\lambda} \right)\left(\overline u+\underline u-2\frac{\Gamma u_B}{\lambda} \right)(s(t)-\varepsilon)^6.
\end{aligned}
\]
Using this and \eqref{758gfkjhgewaqy5t38ufgggggg8765} in 
\eqref{mnbvc09876iuytrewjhgf0}, by \eqref{98765ujhygfhueoirshyor} and \eqref{delta0esplicita} we infer that
\[
\begin{aligned}
&\kappa v_t(r, t)- \partial^2_r v(r, t) +\lambda v(r, t) \\
&\leq \kappa \mu \left(\widetilde u-\frac{\Gamma u_B}{\lambda} \right)(\overline u-\underline u)-(M^2-\lambda)\underline{u}+\frac{M^2\Gamma u_B}{\lambda}\\
&\quad+\kappa CM^2\mu\left(\widetilde u-\frac{\Gamma u_B}{\lambda} \right)\left(3\overline u+2\underline u-5\frac{\Gamma u_B}{\lambda} \right)(s(t)-\varepsilon)^2 \\
&\quad +\kappa C^2M^4\mu\left(\widetilde u-\frac{\Gamma u_B}{\lambda} \right)\left(3\overline u+4\underline u-7\frac{\Gamma u_B}{\lambda} \right)(s(t)-\varepsilon)^4 \\
&\quad
+\kappa C^3M^6\mu\left(\widetilde u-\frac{\Gamma u_B}{\lambda} \right)\left(\overline u+\underline u-2\frac{\Gamma u_B}{\lambda} \right)(s(t)-\varepsilon)^6 \\
&\quad +2CM^2(M^2-\lambda)\left(\overline u+\underline u-\frac{\Gamma u_B}{\lambda} \right)(s(t)-\varepsilon)^2
-C^2M^4(M^2-\lambda)\left(\overline u+\underline u-\frac{\Gamma u_B}{\lambda} \right)(s(t)-\varepsilon)^4 \\
&\leq -\kappa \mu \left(\widetilde u-\frac{\Gamma u_B}{\lambda} \right)(\overline u-\underline u)+2CM^2(M^2-\lambda)\left(\overline u+\underline u-\frac{\Gamma u_B}{\lambda} \right)(s(t)-\varepsilon)^2\\
&\quad +\kappa CM^2 \mu\left(\widetilde u-\frac{\Gamma u_B}{\lambda} \right)\Bigg[3\overline u+4\underline u-7\frac{\Gamma u_B}{\lambda} +C\left(3\overline u+4\underline u-7\frac{\Gamma u_B}{\lambda} \right)+C^2\left(\overline u+\underline u-2\frac{\Gamma u_B}{\lambda} \right) \Bigg]\\
&\quad \times (s(t)-\varepsilon)^2.
\end{aligned}
\]
Then, taking $\delta_0$ as in \eqref{delta0esplicita}, we have that
\begin{equation}\label{veqn<02}
\kappa v_t(r, t)- \partial^2_r v(r, t) +\lambda v(r, t) <0
\quad \mbox{for~$ \varepsilon<r<s(t)$ and~$t\ge t_0$ with }\dot s(t)< 0.
\end{equation}
Combinig \eqref{veqn<01} and \eqref{veqn<02} we get that
\begin{equation}\label{veqn<0}
\kappa v_t(r, t)- \partial^2_r v(r, t) +\lambda v(r, t) <0
\quad \mbox{for~$ \varepsilon<r<s(t)$ and~$t\ge t_0$}.
\end{equation}

We now consider the function $\gamma$ given by~\eqref{gammadefn} (here with~$T_0:=t_0$) and, for~$\varepsilon<r<s(t)$ and~$ t\ge t_0$, we set
\begin{equation}\label{wdefn}
w(r, t):= u(r, t)-v(r, t) + \gamma(t) .
\end{equation}
By~\eqref{gammadefn}, we get
\[
\dot{\gamma}(t)=-\frac{\lambda\overline{u}}{\kappa}e^{-\frac{\lambda(t-t_0)}{\kappa}}
\] and therefore~$\kappa\dot\gamma(t)+\lambda\gamma(t)=0$.

Accordingly, recalling that $u(r, t)$ solves~\eqref{mainfinal1},
\[
\begin{split}
&\kappa w_t(r, t)-\partial^2_r w(r, t) +\lambda w(r, t)-\Gamma u_B \\=\;& \kappa u_t(r, t)-\partial^2_r u(r, t)+\lambda u(r, t)-\Gamma u_B
-\kappa v_t(r, t)+\partial^2_r v(r, t) -\lambda v(r, t) +\kappa\dot{\gamma}(t) +\lambda\gamma(t)\\
=\;&-\kappa v_t(r, t)+\partial^2_r v(r, t)   -\lambda v(r, t).
\end{split}
\]
From this and~\eqref{veqn<0}, we infer that, for~$\varepsilon<r<s(t)$ and~$t\ge t_0$,
\begin{equation}\label{weqn<0}
\kappa w_t(r, t)-\partial^2_r w(r, t)  +\lambda w(r, t)>0 .
\end{equation}

Furthermore, recalling Lemma~\ref{lemmasoluzionev}, if $r=s(t)$ and $t\ge t_0$, then
$$w(s(t), t)=u(s(t), t)-v(s(t), t) + \gamma(t)
=\overline u-\overline u +\gamma(t) = \gamma(t)>0$$
and, if $r=\varepsilon$ and $t\ge t_0$, then
$$ w(\varepsilon, t)= u(\varepsilon, t)-v(\varepsilon, t) +\gamma(t)= \underline u-\underline u+\gamma(t)=\gamma(t)>0.$$
Also, if $t=t_0$ and $\varepsilon<r<s(t_0)$, then, by Lemma~\ref{lemma0<u<utilde},
$$ w(r, t_0)= u(r, t_0)-v(r, t_0) +\gamma(t_0)
=u(r, t_0)-v(r, t_0) +
\overline{u} \ge
\frac{\Gamma u_B}\lambda -\overline{u}+\overline{u}>0.$$

Accordingly, thanks to~\eqref{weqn<0} and the boundary conditions above, we are in the position to apply 
the comparison principle
(see~\cite[Corollary 2.5]{MR1465184}) and deduce that $w(r, t)>0$ for~$\varepsilon<r<s(t)$ and~$t\ge t_0$. Namely,
in light of~\eqref{wdefn}, for~$\varepsilon<r<s(t)$ and~$t\ge t_0$,
\begin{equation*}
u(r, t)>v(r, t) - \gamma(t) .
\end{equation*}

We now use this fact into the free boundary condition~\eqref{freeboundary} (recall also footnote~\ref{ATTEFOO223er} on page~\pageref{ATTEFOO223er})
to obtain that, for any $t\ge t_0$,
$$ \frac{ s^2(t) \dot{s}(t)}{\mu}  =\int_{\varepsilon}^{s(t)} (u(r, t)-\widetilde u) r^2 \,dr > \int_{\varepsilon}^{s(t)} \big(v(r, t)-\gamma(t)-\widetilde u\big) r^2 \,dr .$$
Hence, recalling~\eqref{758gfkjhgewaqy5t38ufgggggg8765},
\begin{eqnarray*}\frac{s^2(t) \dot{s}(t)}{\mu} &>& \int_{\varepsilon}^{s(t)}
\left( \underline{u}-\widetilde u-\gamma(t)-2CM^2\left(\overline u+\underline u -\frac{\Gamma u_B}{\lambda}\right)(s(t)-\varepsilon)^2\right) r^2 \,dr\\&=&
\left( \underline{u}-\widetilde u-\gamma(t)-2CM^2\left(\overline u+\underline u -\frac{\Gamma u_B}{\lambda}\right)(s(t)-\varepsilon)^2\right) \frac{s^3(t)-\varepsilon^3}3.
\end{eqnarray*}
Now, we take $\delta_0$ as in \eqref{delta0esplicita} and  $t_1>t_0$ as in Lemma~\ref{lemmaT1}, given explicitly by
\[
t_1:= t_0 -\frac{\kappa}{\lambda}\log\left(\frac{\underline{u} -\widetilde u}{\overline{u}}\right).
\]
Then, for any\footnote{In fact, one can choose here~\(t\ge t_1+T_\star\) for any \(T_\star>0\) (e.g., \(t\ge t_1+1\)). We simply set \(T_\star := \frac{\kappa}{\lambda}\) since this ratio has the dimensions of time and serves as a natural, intrinsic time-like parameter. Naturally, if one wishes to optimize the estimates on~\(\delta _{0}\), it would be beneficial to make a more refined choice here, as well as a more bespoke selection of the Taylor expansion domains in \eqref{vbuewi58221wsd4trg7uij}.}
$t\ge t_1+{\frac{\kappa}{\lambda}}$,
\[
2CM^2\left(\overline u+\underline u -\frac{\Gamma u_B}{\lambda}\right)(s(t)-\varepsilon)^2\leq \frac{\underline{u}-\widetilde u-\gamma\left(t_1+{\frac{\kappa}{\lambda}}\right)}{2}\leq \frac{\underline{u}-\widetilde u-\gamma(t)}{2},
\]
where
\[
\gamma\left(t_1+{\frac{\kappa}{\lambda}}\right)=\overline u e^{-\frac{\lambda}{\kappa}\left(t_1+{\frac{\kappa}{\lambda}}-t_0\right)}
=(\underline u-\widetilde u)e^{-1}.
\]
Thus, we infer that 
\[
\frac{ s^2(t) \dot{s}(t)}{\mu}\geq \frac{(\underline{u}-\widetilde u-\gamma(t))(s^3(t)-\varepsilon^3)}{6}
>0\quad\mbox{ for any } t> t_1+{\frac{\kappa}{\lambda}}.
\]
From this, one has
\[
\dot{s}(t)>0\quad\mbox{ for any } t> t_1+{\frac{\kappa}{\lambda}},
\]
namely $s(t)$ is a monotone increasing function for any $t> t_1+{\frac{\kappa}{\lambda}}$.  

Accordingly, we can apply~\cite[Chapter~6, Theorem~2]{MR181836} and deduce that
\[
\lim_{t\to+\infty} u(r, t) = u_\star(r),
\]
being $u_\star(r)$ the stationary solution in~\eqref{stationaryall}.
By Proposition~\ref{propabcsPI}, we infer that
\[
\lim_{t\to+\infty} s(t)\ge R_0,
\]
which is in contradiction with~\eqref{contrstep1}. This concludes the proof.
\end{proof}

Next, we prove that the radius $s(t)$ cannot go below a certain threshold, which is strictly greater then the radius of the necrotic core. In order to do this, it is useful to define the constant
\begin{equation}\label{defMtilde}
\widetilde M:=\sqrt{2\kappa\mu \left(\widetilde u-\frac{\Gamma u_B}{\lambda} \right)\left(\frac{\lambda}{\Gamma u_B}(\overline u-\underline u)+\log\frac{\overline u \lambda}{\Gamma u_B}\right)}.
\end{equation}

\begin{proposition}\label{propstep2}
Assume~\eqref{u_0positiva} and~\eqref{relationutilde2}.
Let $(u(r,t), s(t))$ be a solution of~\eqref{mainfinal1} and~\eqref{freeboundary}.

Let~$\delta_0$ be as in Proposition~\ref{propstep1}, $C$ as in \eqref{vbuewi58221wsd4trg7uij}, and $\widetilde M$ as in \eqref{defMtilde}.

Let also
\begin{equation}\label{delta1esplicita}{\footnotesize
\begin{aligned}
\delta_1&:=\min\Bigg\{\frac{\delta_0}{4},\frac{s(0)-\varepsilon}{2},\frac{1}{\widetilde M},
\frac{1}{2\widetilde M}\left(\frac{\underline u-\widetilde u}{4\underline u}\right)^\frac{1}{2}, \\
&\frac{1}{2\widetilde M}\left(\frac{\kappa\mu\left(\widetilde u-\frac{\Gamma u_B}{\lambda} \right)(\overline u-\underline u)}{C\left[\kappa\mu\left(\widetilde u-\frac{\Gamma u_B}{\lambda} \right)(\overline u-\underline u)+\kappa\mu (\overline u-\widetilde u)(2\overline u +3\underline u)
+\kappa C \mu(\overline u-\widetilde u)(2\overline u+3\underline u)+\frac{4\Gamma u_B \widetilde M^2}{\lambda}
\right]}\right)^\frac{1}{2}, \\
&\frac{1}{2\widetilde M}\left(\frac{\kappa\mu\left(\widetilde u-\frac{\Gamma u_B}{\lambda} \right)(\overline u-\underline u)}{C\left[\kappa\mu\left(\widetilde u-\frac{\Gamma u_B}{\lambda} \right)\left(3\overline u+2\underline u+C(3\overline u+4\underline u)+C^2(\overline u+\underline u) \right)+\frac{4\Gamma u_B \widetilde M^2}{\lambda}
\right]}\right)^\frac{1}{2}
\Bigg\}.
\end{aligned}}
\end{equation}

Then,
\begin{equation}\label{eqstep2}
s(t)\geq \varepsilon+\delta_1 \quad \mbox{ for any }t>0.
\end{equation}
\end{proposition}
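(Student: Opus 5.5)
We argue by contradiction, using Proposition~\ref{propstep1} to supply times at which the tumor is thick and a barrier of the same type as~\eqref{vdefinition} to control the thin phases. Suppose~\eqref{eqstep2} fails, and let $\tau$ be a time with $s(\tau)<\varepsilon+\delta_1$. Since $\delta_1\le (s(0)-\varepsilon)/2$, continuity gives a first time $t_2<\tau$ with $s(t_2)=\varepsilon+2\delta_1$. By Proposition~\ref{propstep1}, or directly from $s(0)$, we can take the last time $t_3\le t_2$ before $t_2$ at which $s(t_3)=\varepsilon+\delta_0$, or $t_3=0$. On $[t_3,\tau]$ the radius then stays below $\varepsilon+\delta_0$. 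The time $\tau-t_2$ spent between $\varepsilon+2\delta_1$ and $\varepsilon+\delta_1$ is bounded below by integrating the left inequality of~\eqref{stimespunto}:
\[
\tau-t_2\ge \frac{\log 2}{\mu\left(\widetilde u-\frac{\Gamma u_B}{\lambda}\right)}.
\]
The same integration gives a lower bound for the time $t_2-t_3$ needed to go from $\varepsilon+\delta_0$ down to $\varepsilon+2\delta_1$, namely $\log(\delta_0/(2\delta_1))/\big(\mu(\widetilde u-\frac{\Gamma u_B}{\lambda})\big)\ge \log 2/\big(\mu(\widetilde u-\frac{\Gamma u_B}{\lambda})\big)$, using $\delta_1\le\delta_0/4$.

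Next we build a subsolution on $[t_3,\tau]$. Take $v$ as in~\eqref{vdefinition} with $M$ replaced by $\widetilde M$ from~\eqref{defMtilde}, and set $w=u-v+\widetilde\gamma(t)$, where $\widetilde\gamma$ is an exponentially decaying correction playing the role of $\gamma$ in Lemma~\ref{lemmaT1}. The constant $\widetilde M$ contains the factor $\frac{\lambda}{\Gamma u_B}(\overline u-\underline u)+\log\frac{\overline u\lambda}{\Gamma u_B}$. This suggests that the correction is chosen to decay from at most $\overline u-\frac{\Gamma u_B}{\lambda}$ (the worst initial gap, by Lemma~\ref{lemma0<u<utilde}) to a size below $(\underline u-\widetilde u)/2$ within the time window $\log 2/\big(\mu(\widetilde u-\frac{\Gamma u_B}{\lambda})\big)$ obtained above. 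The resulting rate, of order $\widetilde M^2/\kappa$, is matched by taking $\kappa\widetilde\gamma'+(\widetilde M^2-\lambda)\widetilde\gamma\le0$, or a multiplicative variant $v\,e^{-\cdots}$ suggested by the logarithm. We then repeat the Taylor estimates of Proposition~\ref{propstep1}, via~\eqref{vbuewi58221wsd4trg7uij}, split into the cases $\dot s\ge0$ and $\dot s<0$ and using~\eqref{stimespunto}. The smallness conditions on $\delta_1$ listed in~\eqref{delta1esplicita} are exactly the ones that make
\[
\kappa v_t-\partial_r^2v+\lambda v-\Gamma u_B<0 \qquad\text{whenever } s(t)-\varepsilon\le\delta_0 .
\]
The comparison principle~\cite[Corollary~2.5]{MR1465184} then gives $u\ge v-\widetilde\gamma$ on $[t_3,\tau]$.

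Finally, we plug this into~\eqref{freeboundary}, together with the lower bound $v\ge \underline u-O(\widetilde M^2(s-\varepsilon)^2)$ (analogue of~\eqref{758gfkjhgewaqy5t38ufgggggg8765}). The condition $\delta_1\le\frac{1}{2\widetilde M}\big(\frac{\underline u-\widetilde u}{4\underline u}\big)^{1/2}$ makes the error at most $(\underline u-\widetilde u)/4$. Hence, once $\widetilde\gamma$ has decayed, which happens before the radius reaches $\varepsilon+\delta_1$, we get $\dot s>0$ whenever $s(t)-\varepsilon\le 2\delta_1$. This contradicts $s$ decreasing to $s(\tau)<\varepsilon+\delta_1$.

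The main obstacle is the bookkeeping of the correction term. It must be small by the time the radius is thin, but it starts only from a time $t_3$ at which we have no control on $u$ except $\frac{\Gamma u_B}{\lambda}\le u\le\overline u$. The plan is to calibrate $\widetilde\gamma$ against the minimal descent time coming from~\eqref{stimespunto}. This is precisely where the logarithmic term in $\widetilde M$ should come from. If that calibration fails, we would instead restart the comparison at $t_2$, where the radius is already thin.
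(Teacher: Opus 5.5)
There is a genuine gap. The subsolution is the whole content of the proof, yet it is never actually constructed, and the one you sketch does not work with the constants in the statement.

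You take the fixed-data profile \eqref{vdefinition} with $\widetilde M$, plus an additive correction, $w=u-v+\widetilde\gamma$. For the comparison argument on $w$ you need $\kappa\widetilde\gamma'+\lambda\widetilde\gamma\ge \kappa v_t-\partial_r^2v+\lambda v-\Gamma u_B=\kappa v_t-(\widetilde M^2-\lambda)\bigl(v-\frac{\Gamma u_B}{\lambda}\bigr)$ at every $r$. Letting $r\to\varepsilon$, where $v(\varepsilon,t)=\underline u$ for all $t$ and hence $v_t(\varepsilon,t)=0$, this forces $\kappa\widetilde\gamma'\ge-(\widetilde M^2-\lambda)\bigl(\underline u-\frac{\Gamma u_B}{\lambda}\bigr)-\lambda\widetilde\gamma$. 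So the decay of $\widetilde\gamma$ is limited by how negative the operator on $v$ is, not by an exponential rate $\widetilde M^2/\kappa$. Moreover, $\widetilde\gamma$ must start from $\overline u-\frac{\Gamma u_B}{\lambda}$, because initially you only know $u\ge\frac{\Gamma u_B}{\lambda}$.

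With $\widetilde M$ as in \eqref{defMtilde} and $\kappa$ large, bringing $\widetilde\gamma$ down to $(\underline u-\widetilde u)/2$ takes a time at least of order $\bigl(\overline u-\frac{\Gamma u_B}{\lambda}\bigr)\big/\bigl[\mu\bigl(\widetilde u-\frac{\Gamma u_B}{\lambda}\bigr)\bigl(\frac{\lambda}{\Gamma u_B}(\overline u-\underline u)+\log\frac{\overline u\lambda}{\Gamma u_B}\bigr)\bigl(\underline u-\frac{\Gamma u_B}{\lambda}\bigr)\bigr]$. Your window $\log 2/\mu\bigl(\widetilde u-\frac{\Gamma u_B}{\lambda}\bigr)$ does not cover this when $\underline u$ is close to $\frac{\Gamma u_B}{\lambda}$, so the calibration you flag as the main obstacle really does fail.

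Starting at $t_3$ does not help. The Taylor remainders are of size $C\widetilde M^2(s-\varepsilon)^2$, and \eqref{delta1esplicita} controls them only where $s-\varepsilon\lesssim 2\delta_1$. Nothing ties $\delta_0$ (built from $M$ in \eqref{vcew3495863tfhfaefojgews}) to $1/\widetilde M$. So your inequality ``whenever $s(t)-\varepsilon\le\delta_0$'' is unsupported. Likewise, the claim that \eqref{delta1esplicita} lists exactly what your barrier needs is unfounded: those conditions were computed for a different barrier.

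The missing idea is to drop the additive correction and let the boundary data of the barrier grow in time. The paper works only on the thin strip between $t_1$ (the last time with $s=\varepsilon+2\delta_1$) and $t_2$ (a downward crossing of $\varepsilon+\delta_1$). It uses $v=\overline u(t)v_1+\underline u(t)v_2$, where $v_1,v_2$ are the normalized $\sinh(\widetilde M\,\cdot)$ profiles, $\underline u(t)=\frac{\Gamma u_B}{\lambda}e^{\underline N(t-t_1)}$ and $\overline u(t)=\frac{\Gamma u_B}{\lambda}e^{\overline N(t-t_1)}$. The rates are chosen so that these equal $\underline u$ and $\overline u$ exactly at $t_2$.

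At $t_1$, convexity of $\sinh$ gives $v\le\frac{\Gamma u_B}{\lambda}\le u$, so the weak a priori bound is all that is needed. On the lateral boundary $v$ stays below $\underline u$ and $\overline u$.

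The price is the term $\kappa\overline N v$ in $\kappa v_t$. The minimal descent time $t_2-t_1\ge\frac{1}{2\mu(\widetilde u-\Gamma u_B/\lambda)}$ from \eqref{stimespunto} bounds $\kappa\overline N$ by the logarithmic part of $\widetilde M^2$. The part with $\frac{\lambda}{\Gamma u_B}(\overline u-\underline u)$, combined with $v\ge\frac{\Gamma u_B}{\lambda}\bigl(1-4C\widetilde M^2(s-\varepsilon)^2\bigr)$, dominates the $\dot s$-contribution to $v_t$; this is why $\widetilde M$ carries the factor $\frac{\lambda}{\Gamma u_B}$.

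Comparison then gives $u(\cdot,t_2)\ge v(\cdot,t_2)\ge\underline u\bigl(1-4C\widetilde M^2\delta_1^2\bigr)$, and \eqref{freeboundary} forces $\dot s(t_2)>0$. Your ``multiplicative variant'' points in this direction, but the factor must grow from $\frac{\Gamma u_B}{\lambda}$ up to the true data over $[t_1,t_2]$, not decay.
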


\begin{proof}
If~$s(t)> \varepsilon+2\delta_1$ for any $t>0$, then we are done. If instead~$s(t)< \varepsilon+2\delta_1<\varepsilon+\delta_0$ for any~$t>0$, then we get a contradiction with Proposition~\ref{propstep1}.
Accordingly, we can suppose that there exists~$t_1>0$ such that~$s(t_1)=\varepsilon+2\delta_1$ and $s(t)\geq \varepsilon+\delta_1$ for any $t\in [0,t_1]$.

In order to prove~\eqref{eqstep2}, we assume by contradiction that there exists~$t_2>t_1$ such that
\begin{equation}\label{zxdrtfcfgty}
s(t_2)=\varepsilon+\delta_1  \qquad \mbox{and}\qquad \dot s(t_2)< 0.
\end{equation}
Without loss of generality, we can also assume that
\begin{equation}\label{zxdrtfcfgty1}
\delta_1<s(t)-\varepsilon<2\delta_1 \qquad \mbox{if } t_1<t<t_2.
\end{equation}

We recall that, thanks to Lemma~\ref{lemma0<u<utilde},
\begin{equation}\label{xcftyvbhjmk}
u(r,t_1)\geq \frac{\Gamma u_B}{\lambda}>0
\quad \mbox{ if } \varepsilon<r<s(t_1).
\end{equation}

Now, we define the domain 
\[
D:=\big\{(r,t)\in \R^2 \mbox{ such that } \varepsilon<r<s(t),\,\, t_1<t<t_2\big\}
\]
and we look for a subsolution of~\eqref{mainfinal1} in~$D$. In order to do this, we set 
\[
v_1(r,t):=\frac{\sinh(\widetilde M(r-\varepsilon))}{\sinh( \widetilde M(s(t)-\varepsilon))}\qquad \mbox{ and }\qquad v_2(r,t):=\frac{\sinh(\widetilde M(s(t)-r))}{\sinh( \widetilde M(s(t)-\varepsilon))}
\]
where $\widetilde M$ is defined in \eqref{defMtilde}. We also let
\[
\underline{u}(t):=\frac{\Gamma u_B}{\lambda} e^{\underline{N} (t-t_1)} \qquad \mbox{ and }\qquad \overline{u}(t):=\frac{\Gamma u_B}{\lambda} e^{\overline{N} (t-t_1)},
\]
where
\[
\underline{N}:=\frac{1}{t_2-t_1}\log\frac{\underline{u}\lambda}{\Gamma u_B}\qquad \mbox{ and }\qquad \overline{N}:=\frac{1}{t_2-t_1}\log\frac{\overline{u}\lambda}{\Gamma u_B}.
\]
It is worth noting that, from these definitions and~\eqref{relationutilde2}, it follows that $\overline{N}\geq \underline{N}$ and $\overline{u}(t)\geq \underline{u}(t)$ for any $t\in (t_1,t_2)$. In addition, the function~$\overline{u}(t)- \underline{u}(t)$ is increasing in~$t$. Hence, for any~$t\in (t_1,t_2)$,
\begin{equation}\label{xdftygvbnj}
\overline{u}(t)- \underline{u}(t)\leq \overline{u}(t_2)- \underline{u}(t_2)=\overline{u}- \underline{u}.
\end{equation}
Moreover, we claim that
\begin{equation}\label{Nbounded}
\overline N\leq 2\mu\left(\widetilde u-\frac{\Gamma u_B}{\lambda} \right) \log\frac{\overline{u}\lambda}{\Gamma u_B},
\end{equation}
and to prove this we show that
\begin{equation}\label{claimt1t2}
t_2-t_1\geq \frac{1}{2\mu \widetilde u}.
\end{equation}
First, we define a function
\[
f(t):=s(t_1)+\inf_{\tau\in[t_1,t_2]}\dot s(\tau)(t-t_1), \quad t\in[t_1,t_2],
\]
and observe that $f(t)\leq s(t)$ for any $t\in[t_1,t_2]$.
From \eqref{stimespunto} and \eqref{zxdrtfcfgty1} we have
\[
\dot s(t)\geq -\mu\left(\widetilde u-\frac{\Gamma u_B}{\lambda} \right) (s(t)-\varepsilon)\geq -2\mu\left(\widetilde u-\frac{\Gamma u_B}{\lambda} \right) \delta_1, \quad \mbox{for any }t\in[t_1,t_2].
\]
Thus, we can consider the function
\[
\widetilde f(t):=s(t_1)-2\mu\left(\widetilde u-\frac{\Gamma u_B}{\lambda} \right) \delta_1(t-t_1), \quad t\in[t_1,t_2],
\]
amd obtain that $\widetilde f(t)\leq f(t)\leq s(t)$ for any $t\in[t_1,t_2]$. Then, there exists $\overline t \in [t_1,t_2]$ such that $\widetilde f(\overline t)=s(t_2)$, which implies that
\[
\varepsilon+\delta_1=\varepsilon+2\delta_1-2\mu\left(\widetilde u-\frac{\Gamma u_B}{\lambda} \right) \delta_1(\overline t-t_1).
\]
From this, we infer that
\[
t_2-t_1\geq \overline t-t_1=\frac{1}{2\mu\left(\widetilde u-\frac{\Gamma u_B}{\lambda} \right)},
\]
which proves \eqref{claimt1t2}, and thus the claim in \eqref{Nbounded}.

We now define 
\[
v(r,t):=\overline{u}(t)v_1(r,t)+\underline{u}(t)v_2(r,t)
\]
and we show that $v(r, t)$ is a subsolution of~\eqref{mainfinal1}.

For this, we compute
\begin{equation}\label{xdftygvbnji}
\begin{split}
\partial^2_r v(r,t)&= \overline{u}(t)\partial^2_r v_1(r,t)+\underline{u}(t)\partial^2_r v_2(r,t)\\&=
\overline{u}(t)
\frac{\widetilde M^2\sinh(\widetilde M(r-\varepsilon))}{\sinh(\widetilde M(s(t)-\varepsilon))}
+\underline{u}(t)\frac{\widetilde M^2\sinh(\widetilde M(s(t)-r))}{\sinh(\widetilde M(s(t)-\varepsilon))}\\&=\widetilde M^2v(r,t).
\end{split}
\end{equation}
Moreover,
\begin{equation}\label{xdftygvbnji1}
\begin{split}
v_t(r,t)&=\dot{\overline{u}}(t)v_1(r,t)+\dot{\underline{u}}(t)v_2(r,t)+\overline{u}(t)\partial_t v_{1}(r,t)+\underline{u}(t)\partial_t v_{2}(r,t) \\
&=\overline{N} \,\overline{u}(t)v_1(r,t)+\underline{N}\underline{u}(t)v_2(r,t)+\overline{u}(t)\partial_t v_{1}(r,t)+\underline{u}(t)\partial_t v_{2}(r,t) \\
&\leq \overline{N} v(r,t)+\overline{u}(t)\partial_t v_{1}(r,t)+\underline{u}(t)\partial_t v_{2}(r,t).
\end{split}
\end{equation}

Now, we distinguish two cases, either $\dot s(t)\geq 0$ or $\dot s(t)<0$. In the first case, from~\eqref{vbuewi58221wsd4trg7uij} and \eqref{delta1esplicita} we see that
\begin{eqnarray*}
&&\partial_t v_{1}(r,t)=-\frac{\widetilde M\dot{s}(t)\sinh(\widetilde M(r-\varepsilon))\cosh(\widetilde M(s(t)-\varepsilon))}{\sinh^2(\widetilde M(s(t)-\varepsilon))}\\
&=&-\widetilde M\dot s(t)\sinh(\widetilde M(r-\varepsilon))\coth(\widetilde M(s(t)-\varepsilon))\mbox{csch}(\widetilde M(s(t)-\varepsilon)) \\
&\leq &-\widetilde M\dot s(t)\mbox{csch}(\widetilde M(s(t)-\varepsilon))\big(\widetilde M(r-\varepsilon) - C\widetilde M^3(r-\varepsilon)^3)\big)\left(\frac1{\widetilde M(s(t)-\varepsilon)}- C\widetilde M(s(t)-\varepsilon)\right)
\\
&=&\widetilde M\dot s(t)\mbox{csch}(\widetilde M(s(t)-\varepsilon))\\
&&\quad\times\Bigg(-\frac{r-\varepsilon}{s(t)-\varepsilon}+C\widetilde M^2\left((r-\varepsilon)(s(t)-\varepsilon)-\frac{(r-\varepsilon)^3}{s(t)-\varepsilon}\right) 
-C^2\widetilde M^4(r-\varepsilon)^3(s(t)-\varepsilon)\Bigg)
\end{eqnarray*}
and
\begin{eqnarray*}&&
\partial_t v_{2}(r,t)=
\frac{\widetilde M\dot{s}(t)\cosh(\widetilde M(s(t)-r))}{\sinh(\widetilde M(s(t)-\varepsilon))}-
\frac{\widetilde M\dot{s}(t)\sinh(\widetilde M(s(t)-r))\cosh(\widetilde M(s(t)-\varepsilon))}{\sinh^2(\widetilde M(s(t)-\varepsilon))}\\&&=
\widetilde M\dot{s}(t)\mbox{csch}(\widetilde M(s(t)-\varepsilon))\Big(\cosh(\widetilde M(s(t)-r))-\sinh(\widetilde M(s(t)-r))\coth(\widetilde M(s(t)-\varepsilon))\Big)
\\&&\leq \widetilde M\dot{s}(t)\mbox{csch}(\widetilde M(s(t)-\varepsilon))
\Bigg( 1+C\widetilde M^2(s(t)-r)^2 \\
&&\quad -\big(\widetilde M(s(t)-r)-C\widetilde M^3(s(t)-r)^3)\big)\left(\frac1{\widetilde M(s(t)-\varepsilon)} -C\widetilde M(s(t)-\varepsilon) \right)\Bigg)
\\&&=\widetilde M\dot{s}(t)\mbox{csch}(\widetilde M(s(t)-\varepsilon))\Bigg(\frac{r-\varepsilon}{s(t)-\varepsilon} \\
&&\quad+C\widetilde M^2\left((s(t)-r)^2+(s(t)-r)(s(t)-\varepsilon)-\frac{(s(t)-r)^3}{s(t)-\varepsilon}\right) 
-C^2\widetilde M^4(s(t)-r)^3(s(t)-\varepsilon)\Bigg)
.
\end{eqnarray*}

By using these observations into~\eqref{xdftygvbnji1}, and recalling the estimate in~\eqref{stimespunto} of Lemma~\ref{lemmastimespunto}, we infer that
\begin{equation*}
\begin{split}
&v_t(r,t)\leq \overline{N} v(r,t)\\
&\quad+\widetilde M\dot{s}(t)\mbox{csch}(\widetilde M(s(t)-\varepsilon))
\Bigg[-(\overline{u}(t)- \underline{u}(t))\frac{r-\varepsilon}{s(t)-\varepsilon}+C\widetilde M^2(2\overline u(t)+3\underline u(t))(s(t)-\varepsilon)^2
\Bigg]
\\
&\leq 
\overline{N} v(r,t)+ \widetilde M \mu \left(\widetilde u-\frac{\Gamma u_B}{\lambda} \right)(s(t)-\varepsilon)\mbox{csch}(\widetilde M(s(t)-\varepsilon)) \\
&\quad +C\widetilde M^3\mu(\overline u-\widetilde u)(2\overline u(t)+3\underline u(t))\mbox{csch}(\widetilde M(s(t)-\varepsilon))(s(t)-\varepsilon)^2
\\
&\le \overline{N} v(r,t)+\mu\left(\widetilde u-\frac{\Gamma u_B}{\lambda} \right)(\overline{u}(t)- \underline{u}(t))+C\widetilde M^2 \mu\left(\widetilde u-\frac{\Gamma u_B}{\lambda} \right)(\overline u(t)-\underline u(t))(s(t)-\varepsilon)^2 \\
&\quad +C\widetilde M^2 \mu (\overline u(t)-\widetilde u)(2\overline u(t)+3\underline u(t))(s(t)-\varepsilon)^2
+C^2 \widetilde M^4\mu(\overline u(t)-\widetilde u)(2\overline u(t)+3\underline u(t))(s(t)-\varepsilon)^4.
\end{split}
\end{equation*}
Thus, in light of~\eqref{xdftygvbnj}, we conclude that
\begin{equation*}
\begin{aligned}
v_t(r,t)&\leq \overline{N} v(r,t)+\mu\left(\widetilde u-\frac{\Gamma u_B}{\lambda} \right)(\overline{u}- \underline{u})+C\widetilde M^2 \mu\left(\widetilde u-\frac{\Gamma u_B}{\lambda} \right)(\overline u-\underline u)(s(t)-\varepsilon)^2 \\
&\quad +C\widetilde M^2 \mu (\overline u-\widetilde u)(2\overline u+3\underline u)(s(t)-\varepsilon)^2
+C^2 \widetilde M^4\mu(\overline u-\widetilde u)(2\overline u+3\underline u)(s(t)-\varepsilon)^4.
\end{aligned}
\end{equation*}

Combining this and~\eqref{xdftygvbnji} we get
\begin{equation}\label{xdftygvbnji5}
\begin{split}&
\kappa v_t(r,t)-\partial^2_r v(r,t)+\lambda v(r,t)-\Gamma u_B \\
&\le  \kappa\mu\left(\widetilde u-\frac{\Gamma u_B}{\lambda} \right)(\overline{u}- \underline{u})
-(\widetilde M^2-\lambda-\kappa\overline{N})v(r,t)
-\Gamma u_B \\
&\quad+\kappa C\widetilde M^2 \mu\left(\widetilde u-\frac{\Gamma u_B}{\lambda} \right)(\overline u-\underline u)(s(t)-\varepsilon)^2
+\kappa C\widetilde M^2 \mu (\overline u-\widetilde u)(2\overline u+3\underline u)(s(t)-\varepsilon)^2 \\
&\quad +\kappa C^2 \widetilde M^4\mu(\overline u-\widetilde u)(2\overline u+3\underline u)(s(t)-\varepsilon)^4.
\end{split}
\end{equation}

Now, from \eqref{vbuewi58221wsd4trg7uij} we observe that
\begin{eqnarray*}
v_1(r,t)&\geq &\frac{r-\varepsilon}{s(t)-\varepsilon}-2C\widetilde M^2(s(t)-\varepsilon)^2\\
{\mbox{and }}\quad
v_2(r,t)&\geq &\frac{s(t)-r}{s(t)-\varepsilon}-2C\widetilde M^2(s(t)-\varepsilon)^2.
\end{eqnarray*}
Therefore, for any~$t\in(t_1,t_2)$,
\begin{equation}\label{8765jhgfdhgfd765436543} v(r,t)\geq \underline{u}(t)(v_1+v_2)=\underline{u}(t)\big(1-4C\widetilde M^2(s(t)-\varepsilon)^2)\big)
\geq \frac{\Gamma u_B}{\lambda}\big(1-4C\widetilde M^2(s(t)-\varepsilon)^2)\big).
\end{equation}

Thus, using~\eqref{8765jhgfdhgfd765436543} into~\eqref{xdftygvbnji5} and recalling \eqref{defMtilde} and \eqref{Nbounded}
 we infer that
\[
\begin{aligned}
\kappa v_t(r,t)&-\partial^2_r v(r,t)+\lambda v(r,t)-\Gamma u_B \\
&\leq -\kappa\mu\left(\widetilde u-\frac{\Gamma u_B}{\lambda} \right)(\overline{u}- \underline{u})+\frac{4\Gamma u_B C\widetilde M^4}{\lambda}(s(t)-\varepsilon)^2 \\
&\quad +\kappa C\widetilde M^2 \mu\left(\widetilde u-\frac{\Gamma u_B}{\lambda} \right)(\overline u-\underline u)(s(t)-\varepsilon)^2
+\kappa C\widetilde M^2 \mu (\overline u-\widetilde u)(2\overline u+3\underline u)(s(t)-\varepsilon)^2 \\
&\quad +\kappa C^2 \widetilde M^4\mu(\overline u-\widetilde u)(2\overline u+3\underline u)(s(t)-\varepsilon)^4.
\\
&\leq -\kappa\mu\left(\widetilde u-\frac{\Gamma u_B}{\lambda} \right)(\overline{u}- \underline{u})+C\widetilde M^2(s(t)-\varepsilon)^2 
\Bigg[\kappa\mu\left(\widetilde u-\frac{\Gamma u_B}{\lambda} \right)(\overline u-\underline u) \\
&\quad +\kappa  \mu(\overline u-\widetilde u)(2\overline u+3\underline u)+\kappa C\mu(\overline u-\widetilde u)(2\overline u+3\underline u)
+\frac{4\Gamma u_B \widetilde M^2}{\lambda}
\Bigg].
\end{aligned}
\]
Taking~$\delta_1$ as in \eqref{delta1esplicita}, we have that
\[
\kappa v_t(r,t)-\partial^2_r v(r,t)+\lambda v(r,t)-\Gamma u_B<0
\]
for $\varepsilon<r<s(t)$ and $t\in(t_1,t_2)$ with $\dot s(t)\geq 0$.

Similar computations show that 
\[
\kappa v_t(r,t)-\partial^2_r v(r,t)+\lambda v(r,t)-\Gamma u_B<0
\]
for $\varepsilon<r<s(t)$ and $t\in(t_1,t_2)$ with $\dot s(t)< 0$, provided that $\delta_1$ is taken as in \eqref{delta1esplicita}.

We now show that $v(r, t)\leq u(r, t)$ on the parabolic boundary of $D$, namely on 
\[
\partial D\setminus \{(r,t)\in \R^2 \mbox{ such that } \varepsilon<r<s(t_2),\,\,t=t_2\}.
\]
First, by the convexity of~$\sinh(x)$ for~$x>0$ and recalling~\eqref{xcftyvbhjmk}, we have that, if~$\varepsilon<r<s(t_1)$,
\begin{eqnarray*}
v(r,t_1)&=&
\frac{\overline{u}(t_1)\sinh(M(r-\varepsilon))+\underline{u}(t_1)\sinh(M(s(t_1)-r))}{\sinh(M(s(t_1)-\varepsilon))}
\\&=&
\frac{\Gamma u_B\big(\sinh(M(r-\varepsilon))+\sinh(M(s(t_1)-r))\big)}{\lambda\sinh(M(s(t_1)-\varepsilon))}\\
&\leq& \frac{\Gamma u_B}{\lambda}\\&\leq& u(r,t_1). 
\end{eqnarray*}
Moreover, if~$t_1<t<t_2$,
\begin{eqnarray*}
v(\varepsilon,t)&=&\underline{u}(t)\leq \underline{u}=u(\varepsilon,t)
\\ {\mbox{and }}\quad
v(s(t),t)&=&\overline{u}(t)\leq \overline{u}=u(s(t),t).
\end{eqnarray*}

Thus, we are in the position of employing the comparison principle (see e.g.~\cite[Corollary~2.5]{MR1465184}) and obtain that~$u\geq v$ in~$D$ and, in particular,
\begin{equation*}
u(r,t_2)\geq v(r,t_2) \qquad \mbox{if }\varepsilon<r<s(t_2).
\end{equation*}
From this and~\eqref{freeboundary} (recall also footnote~\ref{ATTEFOO223er} on page~\pageref{ATTEFOO223er}), we infer that
$$ \dot s(t_2)\geq \frac{\mu}{s^2(t_2)}\int_\varepsilon^{s(t_2)}(v(r,t_2)-\widetilde u)r^2\,dr.$$
Recalling~\eqref{8765jhgfdhgfd765436543}, we thus find that
\begin{eqnarray*} \dot s(t_2)&\ge&\frac{\mu}{s^2(t_2)} \big(\underline{u}(t_2)-4\underline{u}(t_2)C\widetilde M^2(s(t_2)-\varepsilon)^2-\widetilde u\big)\int_\varepsilon^{s(t_2)}r^2\,dr
\\&=&\frac{\mu}{s^2(t_2)} \big(\underline u-\widetilde u-4\underline u C\widetilde M^2\delta_1^2\big)\frac{s^3(t_2)-\varepsilon^3}3
.\end{eqnarray*}
Therefore, if~$\delta_1$ is taken as in \eqref{delta1esplicita}, we conclude that~$\dot s(t_2)>0$, in contradiction with~\eqref{zxdrtfcfgty}.
\end{proof}

We are now in the position of completing the proof of Theorem~\ref{th:main1}.

\begin{proof}[Proof of the Theorem~\ref{th:main1}]
From Proposition~\ref{propstep2}, it is enough to take
\begin{equation}\label{deltastar}
\delta_*:=\delta_1
\end{equation}
to obtain the desired inequality in~\eqref{deltastarclaim}.
\end{proof}

We are now in the position to prove Proposition \ref{propCK>0}:
\begin{proof}[Proof of Proposition \ref{propCK>0}]
Let $K\subset (0, +\infty)$ be a compact set.
By \eqref{delta0esplicita}, \eqref{delta1esplicita} and \eqref{deltastar}, we infer that $\delta_*(\kappa)$ is continuous in $\kappa$. Moreover,  $\delta_*(\kappa)>0$.
Therefore, by the extreme value theorem,  we have that
\[
\delta_*(\kappa)\ge \min_{\kappa\in K} \delta_*(\kappa)>0.
\]
Thus, the desired result follows by taking $C_K:=\min_{\kappa\in K} \delta_*(\kappa)$.
\end{proof}

\section{Proof of Theorem~\ref{main3}}\label{èlpkmojbhugvyufctydrstetdryftgyhu}
To start with, we provide a useful integral equation for the radius $s(t)$, which will play a key role in the proofs of Theorems \ref{main3} and \ref{main2}.

\begin{lemma}\label{lpèkoijhugyvftcdrxsedctfvygbhjnkm}
Let $(u(r,t), s(t))$ be a solution of~\eqref{mainfinal1} and~\eqref{freeboundary}. 

For any $t>0$,  let
\begin{equation}\label{hdefn}
h(t):=\frac13 (s^3(t)-\varepsilon^3).
\end{equation}

Then,
\begin{equation}\label{dctfuvygibuhonijpmko,}
\begin{split}
\frac{\kappa}{\mu}\dot h(t)&=\int_0^t\left(s^2(\tau)\frac{\partial u}{\partial r}(s(\tau),\tau)-\varepsilon^2\frac{\partial u}{\partial r}(\varepsilon,\tau)-2\overline{u}s(\tau)+2\varepsilon \underline{u} +2\int_\varepsilon^{s(\tau)}u(r,\tau)dr\right) \,d\tau \\
&\qquad +\left(\kappa(\overline{u}-\widetilde u)-\frac{\lambda}{\mu}\right)h(t)-(\lambda\widetilde u -\Gamma u_B) \int_0^t h(\tau)\,d\tau-\overline\gamma,
\end{split}
\end{equation}
being
\begin{equation}\label{overgammadefn}
\overline\gamma:=\left(\kappa \overline{u}-\frac{\lambda}{\mu}\right)h(0)-\kappa\int_\varepsilon^{s(0)}r^2u_0(r)\,dr.
\end{equation}
\end{lemma}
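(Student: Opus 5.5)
The plan is to integrate the PDE against the volume weight $r^2$ over the moving annulus $\{\varepsilon<r<s(t)\}$, and then use the free boundary condition~\eqref{freeboundary} to trade the resulting mass functional for $\dot h$ and $h$. By Lemma~\ref{lemmastimespunto} we have $s(t)>\varepsilon$ for all $t\ge0$, so~\eqref{freeboundary} reads $s^2\dot s=\mu\int_\varepsilon^{s(t)}(u-\widetilde u)r^2\,dr$. The first step is to rewrite the equation radially. By~\eqref{laplradial} and~\eqref{mainfinal1:DUE}, the drift term equals $-\frac{2}{r}\,u_r$, which cancels the $\frac2r u_r$ in the Laplacian. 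Hence $u$ solves
\begin{equation*}
\kappa u_t=u_{rr}-\lambda u+\Gamma u_B \qquad\mbox{in } \varepsilon<r<s(t).
\end{equation*}

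Next I introduce the weighted mass $H(t):=\int_\varepsilon^{s(t)} r^2u(r,t)\,dr$. The free boundary law then says exactly that $\dot h=s^2\dot s=\mu(H-\widetilde u\,h)$, that is,
\begin{equation*}
H(t)=\frac{\dot h(t)}{\mu}+\widetilde u\,h(t).
\end{equation*}
Differentiating $H$ with the Leibniz rule and using $u(s(t),t)=\overline u$ gives $\dot H=\overline u\,s^2\dot s+\int_\varepsilon^{s}r^2u_t\,dr=\overline u\,\dot h+\int_\varepsilon^{s}r^2u_t\,dr$. Multiplying by $\kappa$ and inserting the radial equation, I integrate $\int_\varepsilon^s r^2u_{rr}\,dr$ by parts twice. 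With the boundary values $u(\varepsilon)=\underline u$ and $u(s)=\overline u$ this yields
\begin{equation*}
s^2u_r(s,t)-\varepsilon^2u_r(\varepsilon,t)-2\overline u s+2\varepsilon\underline u+2\int_\varepsilon^s u\,dr=:I(t),
\end{equation*}
which is precisely the integrand in~\eqref{dctfuvygibuhonijpmko,}. Thus
\begin{equation*}
\kappa\dot H=\kappa\overline u\,\dot h+I(t)-\lambda H+\Gamma u_B\,h.
\end{equation*}

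The final step is to integrate this identity on $(0,t)$, using $H(0)=\int_\varepsilon^{s(0)}r^2u_0\,dr$. I replace $\int_0^tH\,d\tau$ by $\frac{h(t)-h(0)}{\mu}+\widetilde u\int_0^th\,d\tau$, and then substitute $H(t)=\dot h(t)/\mu+\widetilde u\,h(t)$ on the left-hand side. Collecting terms, the coefficient of $h(t)$ is $\kappa\overline u-\kappa\widetilde u-\lambda/\mu$. The coefficient of $\int_0^t h$ is $-(\lambda\widetilde u-\Gamma u_B)$. The constant is $-(\kappa\overline u-\lambda/\mu)h(0)+\kappa H(0)=-\overline\gamma$, which gives~\eqref{dctfuvygibuhonijpmko,} with~\eqref{overgammadefn}.

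The only delicate point is justifying the differentiation under the integral with a moving endpoint, and the use of $u_r$ up to $r=\varepsilon$ and $r=s(t)$. This rests on the standing assumption that solutions are classical and $C^1$ up to the lateral boundary, together with $s$ being $C^1$. If needed, I would first carry out the computation on $[\delta,t]$ and let $\delta\to0$, using $u_0\in C^1$ for continuity at $t=0$. The remaining work is bookkeeping.
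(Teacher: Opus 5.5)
Your proposal is correct and follows essentially the same route as the paper. You reduce to $\kappa u_t=u_{rr}-\lambda u+\Gamma u_B$, weight by $r^2$, and use the Leibniz rule with $u(s(t),t)=\overline u$ for the time-derivative term. You then integrate $\int_\varepsilon^{s} r^2u_{rr}\,dr$ by parts with the boundary values, and use the free boundary law to replace the weighted mass by $\dot h/\mu+\widetilde u\,h$. The only cosmetic difference is that you differentiate $H$ first and integrate in time at the end, whereas the paper integrates over space-time from the outset; your bookkeeping, including the constant $-\overline\gamma$, checks out.
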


\begin{proof} Multiplying the equation in~\eqref{mainfinal1}
by $r^2$ and integrating, we see that
\begin{equation}\label{xcfgyucsfghujicvh}
\begin{split}
&\kappa\int_0^t\left(\int_\varepsilon^{s(\tau)}r^2\frac{\partial u}{\partial t}(r,\tau)\, dr\right)\,d\tau-\int_0^t\left(\int_\varepsilon^{s(\tau)}r^2\frac{\partial^2u}{\partial r^2}(r,\tau)\, dr\right)\,d\tau+\lambda\int_0^t\left(\int_\varepsilon^{s(\tau)}r^2 u(r,\tau)\, dr\right)\,d\tau \\&\qquad=\Gamma u_B\int_0^t\left(\int_\varepsilon^{s(\tau)}r^2\, dr\right)\,d\tau=\frac{\Gamma u_B}3\int_0^t (s^3(\tau)-\varepsilon^3) \,d\tau.
\end{split}
\end{equation}
Moreover, from~\eqref{freeboundary} (recall also footnote~\ref{ATTEFOO223er} on page~\pageref{ATTEFOO223er}),
we infer that
\begin{equation}\label{xcfgyucsfghujicvh1}
\int_\varepsilon^{s(t)}r^2 u(r,t)\,dr=\frac{1}{\mu}s^2(t)\dot s(t)+\frac{\widetilde u}{3}(s^3(t)-\varepsilon^3).\end{equation}
Integrating this identity over~$\tau\in[0,t]$, we have that
\begin{equation}\label{xcfgyucsfghujicvh4}
\begin{aligned}
\int_0^t\left(\int_\varepsilon^{s(\tau)}r^2 u(r,\tau)\,dr\right)\,d\tau&=
\frac{1}{\mu}\int_0^t s^2(\tau)\dot s(\tau)\,d\tau+\frac{\widetilde u}{3}\int_0^t(s^3(\tau)-\varepsilon^3)\,d\tau \\
&=\frac{1}{3\mu}(s^3(t)-s^3(0))+\frac{\widetilde u}{3}\int_0^t(s^3(\tau)-\varepsilon^3)\,d\tau.
\end{aligned}
\end{equation}

Now, we observe that
\begin{eqnarray*}&&
\frac{d}{d\tau}\left(\int_\varepsilon^{s(\tau)}r^2u(r,\tau)\,dr\right)
=s^2(\tau) u(s(\tau),\tau)\dot s(\tau)+
\int_\varepsilon^{s(\tau)}r^2\frac{\partial u}{\partial t}(r,\tau)\,dr.
\end{eqnarray*}
On this account, by integrating over~$\tau\in[0,t]$,
we find that
\begin{eqnarray*}&&
\int_\varepsilon^{s(t)}r^2u(r,t)\,dr
-\int_\varepsilon^{s(0)}r^2u(r,0)\,dr
=\int_0^t
s^2(\tau) u(s(\tau),\tau)\dot s(\tau)\,d\tau+\int_0^t\left(
\int_\varepsilon^{s(\tau)}r^2\frac{\partial u}{\partial t}(r,\tau)\,dr\right)\,d\tau.
\end{eqnarray*}

Thus, recalling the boundary conditions in~\eqref{mainfinal1} and~\eqref{xcfgyucsfghujicvh1}, we get that
\begin{equation}\label{xcfgyucsfghujicvh2}
\begin{aligned}&
\int_0^t\left(\int_\varepsilon^{s(\tau)}r^2\frac{\partial u}{\partial t}(r,\tau)\, dr\right)\,d\tau=\int_\varepsilon^{s(t)}r^2u(r,t)\,dr-\int_\varepsilon^{s(0)}r^2u(r,0)\,dr-\overline{u}\int_0^ts^2(\tau)\dot s(\tau)\,d\tau \\
&\qquad=\frac{1}{\mu}s^2(t)\dot s(t)+\frac{\widetilde u}{3}(s^3(t)-\varepsilon^3)-\int_\varepsilon^{s(0)}r^2u(r,0)\,dr-\frac{\overline{u}}{3}(s^3(t)-s^3(0)) \\
&\qquad=\frac{1}{\mu}s^2(t)\dot s(t)-\frac{\overline{u}-\widetilde u}{3}s^3(t)-\frac{\widetilde u \varepsilon^3}{3}+\frac{\overline{u}}{3}s^3(0)-\int_\varepsilon^{s(0)}r^2u(r,0)\,dr.
\end{aligned}
\end{equation}

Furthermore,
$$
\frac{\partial}{\partial r}\big( 2r u(r,\tau)+r^2 u(r,\tau)\big)
= 2u(r,\tau)+r^2 \frac{\partial^2u}{\partial r^2}(r,\tau)
+4r\frac{\partial u}{\partial r}(r,\tau).$$
Hence, integrating for~$r\in[\varepsilon,s(\tau)]$,
and recalling the boundary conditions in~\eqref{mainfinal1}, we conclude that
\begin{equation}\label{vfdedfghPASKJdmedf}
\begin{split}
&2\overline u s(\tau)+s^2(\tau) \frac{\partial u}{\partial r}(s(\tau),\tau)
-2\underline u\varepsilon -\varepsilon^2 \frac{\partial u}{\partial r}(\varepsilon,\tau)\\
&\qquad= 2s(\tau) u(s(\tau),\tau)+s^2(\tau) \frac{\partial u}{\partial r}(s(\tau),\tau)
-2\varepsilon u(\varepsilon,\tau)-\varepsilon^2 \frac{\partial u}{\partial r}(\varepsilon,\tau)\\&\qquad= 2\int_\varepsilon^{s(\tau)}u(r,\tau)\,dr+\int_\varepsilon^{s(\tau)}r^2 \frac{\partial^2u}{\partial r^2}(r,\tau)\,dr
+4\int_\varepsilon^{s(\tau)}r\frac{\partial u}{\partial r}(r,\tau)\,dr.
\end{split}\end{equation}

Moreover,
$$ \frac{\partial }{\partial r}\big(ru(r,\tau)\big)=
u(r,\tau)+r\frac{\partial u}{\partial r}(r,\tau)$$
and consequently
\begin{eqnarray*} 
&& \overline u s(\tau)-\underline u\varepsilon=
s(\tau)u(s(\tau),\tau)-\varepsilon u(\varepsilon,\tau)
=\int_\varepsilon^{s(\tau)}\frac{\partial }{\partial r}\big(ru(r,\tau)\big)\,dr=\int_\varepsilon^{s(\tau)}\Big(
u(r,\tau)+r\frac{\partial u}{\partial r}(r,\tau)\Big)\,dr.
\end{eqnarray*}
We rewrite this in the form
$$ \int_\varepsilon^{s(\tau)}r\frac{\partial u}{\partial r}(r,\tau)\,dr=
\overline u s(\tau)-\underline u\varepsilon
-\int_\varepsilon^{s(\tau)}
u(r,\tau)\,dr.$$

Combining this identity and~\eqref{vfdedfghPASKJdmedf} we arrive at
\begin{equation*}
\begin{split}
&-2\overline u s(\tau)+s^2(\tau) \frac{\partial u}{\partial r}(s(\tau),\tau)
+2\underline u\varepsilon -\varepsilon^2 \frac{\partial u}{\partial r}(\varepsilon,\tau)+ 2\int_\varepsilon^{s(\tau)}u(r,\tau)\,dr=\int_\varepsilon^{s(\tau)}r^2 \frac{\partial^2u}{\partial r^2}(r,\tau)\,dr.
\end{split}\end{equation*}
We now integrate over~$\tau\in[0,t]$ and gather that
\begin{equation}\label{xcfgyucsfghujicvh3}
\begin{aligned}&
\int_0^t\left(\int_\varepsilon^{s(\tau)}r^2\frac{\partial^2u}{\partial r^2}(r,\tau)\, dr\right)\,d\tau \\
&=\int_0^t\left(s^2(\tau)\frac{\partial u}{\partial r}(s(\tau),\tau)-\varepsilon^2\frac{\partial u}{\partial r}(\varepsilon,\tau)-2\overline{u} s(\tau)+2\varepsilon\underline{u}+2\int_\varepsilon^{s(\tau)}u(r,\tau)dr\right) \,d\tau.
\end{aligned}
\end{equation}

Plugging \eqref{xcfgyucsfghujicvh4},
\eqref{xcfgyucsfghujicvh2} and~\eqref{xcfgyucsfghujicvh3} into~\eqref{xcfgyucsfghujicvh} we arrive 
at
\begin{equation*}
\begin{aligned}
\frac{\kappa}{\mu}s^2(t)\dot s(t)&=\int_0^t\left(s^2(\tau)\frac{\partial u}{\partial r}(s(\tau),\tau)-\varepsilon^2\frac{\partial u}{\partial r}(\varepsilon,\tau)-2\overline{u}s(\tau)+2\varepsilon \underline{u} +2\int_\varepsilon^{s(\tau)}u(r,\tau)dr\right) \,d\tau \\
&\qquad +\frac{1}{3}\left(\kappa(\overline{u}-\widetilde u)-\frac{\lambda}{\mu}\right)s^3(t)-\frac{\lambda\widetilde u-\Gamma u_B}{3} \int_0^t (s^3(\tau)-\varepsilon^3)\,d\tau\\&\qquad-
\frac{1}{3}\left(\kappa \overline{u}-\frac{\lambda}{\mu}\right)s^3(0)+\frac{\kappa \widetilde u \varepsilon^3}{3}+\kappa\int_\varepsilon^{s(0)}r^2u_0(r)\,dr.
\end{aligned}
\end{equation*}
This and~\eqref{hdefn} yield the desired result in~\eqref{dctfuvygibuhonijpmko,}.
\end{proof}

We now present some preliminary results which will be useful in the proof of Theorem~\ref{main3}.
First, we construct suitable barrier functions in order to control the radial derivative of $u(r,t)$ at both the boundary of the necrotic core and the moving free boundary.
Although these barriers are primarily employed in the proof of Theorem \ref{main3}, they are also of independent interest, as they yield biologically meaningful bounds on the cell flux at the tumor interfaces. In particular, we prove that cell gradients at both
the tumor free boundary and the necrotic core remain uniformly bounded,
as quantified, respectively,
in~\eqref{PKA:SNFVBAol2dPjmsd} and~\eqref{pqkd-32i5mb96n-8mnbJHvm689mbS8} here below. These uniform bounds align with biological expectations, since diffusion and growth processes occur at finite rates, preventing sharp variations in cell concentration.

To begin with, we establish a uniform upper bound for the radial derivative of the cell concentration at the free boundary $r=s(t)$.
Assuming that the cell concentration near the tumor surface is initially bounded away from zero and that the boundary velocity admits a uniform lower bound, we show that the outward radial derivative of $r=s(t)$ remains bounded from above for any $t>0$. 
More precisely, we have the following:

\begin{lemma}\label{barrieradasopra}
Assume~\eqref{relationutilde2} and \eqref{okpijhugyftdruyjmlò,.àxqBHJKNGVCUhbjnaekfml}.
Let $K\subset (0, +\infty)$ be compact and let $C_K>0$ be as in Proposition \ref{propCK>0}.
Let~$\kappa\in K$.
Let $(u(r,t), s(t))$ be a solution of~\eqref{mainfinal1} and~\eqref{freeboundary}, with~$u_0\in C^1([s(0)-C_K, s(0)])$.

Suppose that
\begin{equation}\label{2fhojabkcsbhigfueabjckvgòu}
\mbox{there exists $S>0$ such that $\dot{s}(t)\ge - S$, for any $t>0$}.
\end{equation}
Then,
\begin{equation}\label{PKA:SNFVBAol2dPjmsd}
\dfrac{\partial u}{\partial r}(s(t), t) \le \beta\left(\overline u -\frac{\Gamma u_B}{\lambda}\right),\quad\mbox{ for any } t>0,
\end{equation}
being
\begin{equation}\label{kpmnojbixhvubijnokmòlcvgjklmò,.}
\beta:=\max\left\{\dfrac{\kappa S +\sqrt{(\kappa S)^2 +4\lambda}}{2}, \frac{1}{C_K}\log\left(\dfrac{\overline u-\frac{\Gamma u_B}{\lambda}}{\underline u-\frac{\Gamma u_B}{\lambda}}\right), \dfrac{\|u'_0\|_{L^\infty([s(0)-C_K, s(0)])}}{\overline u-\frac{\Gamma u_B}{\lambda}-C_K\|u'_0\|_{L^\infty([s(0)-C_K, s(0)])}}\right\}. 
\end{equation}
\end{lemma}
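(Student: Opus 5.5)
The plan is to prove \eqref{PKA:SNFVBAol2dPjmsd} by comparison with an explicit \emph{subsolution} that touches $u$ at the free boundary. Write $c:=\frac{\Gamma u_B}{\lambda}$ and set
\[
w(r,t):=c+\left(\overline u-c\right)e^{\beta(r-s(t))},\qquad \varepsilon\le r\le s(t),\ t\ge0,
\]
with $\beta$ as in \eqref{kpmnojbixhvubijnokmòlcvgjklmò,.}. Suppose we know $w\le u$ in $\{\varepsilon<r<s(t)\}$. Since $w(s(t),t)=\overline u=u(s(t),t)$, comparing difference quotients at $r=s(t)$ from the left gives $\partial_r u(s(t),t)\le \partial_r w(s(t),t)=\beta(\overline u-c)$, which is exactly the claim. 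The proof therefore reduces to checking that $w$ is a subsolution and that $w\le u$ on the parabolic boundary.

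\emph{Differential inequality.} With $\chi_0=2$ the radial form of \eqref{mainfinal1} is $\kappa u_t=u_{rr}-\lambda u+\Gamma u_B$, because the drift cancels the term $\frac2r u_r$. Writing $\varphi:=w-c>0$, one computes
\[
\kappa w_t-w_{rr}+\lambda w-\Gamma u_B=\varphi\,\bigl(-\kappa\beta\dot s(t)-\beta^2+\lambda\bigr).
\]
By \eqref{2fhojabkcsbhigfueabjckvgòu} we have $-\kappa\beta\dot s\le\kappa\beta S$, so the bracket is at most $\lambda+\kappa S\beta-\beta^2$. This is $\le0$ precisely when $\beta\ge\frac{\kappa S+\sqrt{(\kappa S)^2+4\lambda}}{2}$, which is the first entry in the maximum defining $\beta$. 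Hence $w$ is a subsolution.

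\emph{Lateral boundaries.} At $r=s(t)$ we have equality. At $r=\varepsilon$, Proposition \ref{propCK>0} gives $s(t)-\varepsilon\ge C_K$ because $\kappa\in K$. Then
\[
w(\varepsilon,t)\le c+(\overline u-c)e^{-\beta C_K}\le \underline u=u(\varepsilon,t),
\]
by the second entry of $\beta$, since $e^{-\beta C_K}\le\frac{\underline u-c}{\overline u-c}$.

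\emph{Initial time.} Set $L:=\|u_0'\|_{L^\infty([s(0)-C_K,s(0)])}$ and $x:=s(0)-r\in[0,C_K]$. Since $u_0(s(0))=\overline u$, we have $u_0(r)\ge\overline u-Lx$ on this interval. By convexity of $x\mapsto c+(\overline u-c)e^{-\beta x}$, it suffices to compare at the endpoints $x=0$ and $x=C_K$. At $x=C_K$, using $e^{-y}\le\frac1{1+y}$, the requirement $(\overline u-c)e^{-\beta C_K}\le \overline u-c-LC_K$ follows from
\[
\beta\ge\frac{L}{\overline u-c-LC_K}.
\]
This is the third entry of $\beta$, and the denominator is positive by \eqref{okpijhugyftdruyjmlò,.àxqBHJKNGVCUhbjnaekfml}.

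For $r\in[\varepsilon,s(0)-C_K]$, monotonicity of $w$ in $r$ gives $w\le c+(\overline u-c)e^{-\beta C_K}\le\underline u$. So we need $u_0\ge\underline u$ there. This is hypothesis \eqref{kolmjopjnhbgyugfcdrtesxzwea} of Theorem \ref{main3}, which I would invoke since the lemma is only used in that setting.

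\emph{Conclusion and main obstacle.} With the boundary and initial inequalities in place, \cite[Corollary~2.5]{MR1465184} yields $u\ge w$, and the derivative bound follows as described at the start. The delicate point is the initial comparison. It has two parts: the elementary convexity estimate on $[s(0)-C_K,s(0)]$, which is what forces the third entry of $\beta$, and the need for $u_0\ge\underline u$ away from the boundary. I would verify that the latter is indeed available, or else restrict the comparison domain to the strip $s(t)-C_K<r<s(t)$ if an interior lower bound for $u$ can be supplied there instead.
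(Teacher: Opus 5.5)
Your proof is correct and follows the paper's argument essentially verbatim: the same barrier $\frac{\Gamma u_B}{\lambda}+\bigl(\overline u-\frac{\Gamma u_B}{\lambda}\bigr)e^{-\beta(s(t)-r)}$, the same three boundary and initial checks matched to the three entries of $\beta$, and the same touching argument at $r=s(t)$. Your convexity-plus-endpoints check on $[s(0)-C_K,s(0)]$ is only a repackaging of the paper's pointwise chain $\|u_0'\|_{L^\infty}\le\frac{\beta(\overline u-\Gamma u_B/\lambda)}{1+\beta(s(0)-r)}\le(\overline u-\Gamma u_B/\lambda)\frac{1-e^{-\beta(s(0)-r)}}{s(0)-r}$, and you are right that the comparison on $[\varepsilon,s(0)-C_K]$ needs \eqref{kolmjopjnhbgyugfcdrtesxzwea}, which the lemma's statement omits but the paper's proof invokes in exactly the same way, so no fallback strip argument is needed.
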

\begin{proof}
In light of \eqref{relationutilde2} and \eqref{okpijhugyftdruyjmlò,.àxqBHJKNGVCUhbjnaekfml}, we have that~$\beta\in(0,+\infty)$.

We use a barrier argument by considering the function
\begin{equation}\label{phidefbnjhbiugs}
\phi(r, t):= \frac{\Gamma u_B}{\lambda}+\left(\overline u -\frac{\Gamma u_B}{\lambda}\right)e^{-\beta(s(t)-r)}.
\end{equation}

By \eqref{relationutilde2}, \eqref{2fhojabkcsbhigfueabjckvgòu} and \eqref{kpmnojbixhvubijnokmòlcvgjklmò,.},  we infer that
\[
\begin{split}
&\kappa \phi_t (r,t)-\partial_{rr} \phi(r,t)+\lambda \phi(r,t)- \Gamma u_B \\&\;=-\kappa\beta \dot{s}(t)\left(\overline u -\frac{\Gamma u_B}{\lambda}\right) e^{-\beta(s(t)-r)}
-\beta^2\left(\overline u -\frac{\Gamma u_B}{\lambda}\right) e^{-\beta(s(t)-r)}+\lambda\left(\overline u -\frac{\Gamma u_B}{\lambda}\right) e^{-\beta(s(t)-r)}\\
&\;\le\left(\overline u -\frac{\Gamma u_B}{\lambda}\right) e^{-\beta(s(t)-r)}\left(\kappa S \beta-\beta^2 +\lambda \right)\\
&\;\le 0.
\end{split}
\]
Moreover, by \eqref{phidefbnjhbiugs} and the boundary condition in \eqref{mainfinal1}, and recalling Proposition~\ref{propCK>0}, we get
\[
\begin{split}
&\phi(\varepsilon, t) =\frac{\Gamma u_B}{\lambda}+\left(\overline u -\frac{\Gamma u_B}{\lambda}\right)e^{-\beta(s(t)-\varepsilon)}\le \frac{\Gamma u_B}{\lambda}+\left(\overline u -\frac{\Gamma u_B}{\lambda}\right)e^{-\beta C_K}\le\underline u = u(\varepsilon,  t),\\
\mbox{ and } \quad &\phi(s(t), t)=\overline u = u(s(t), t).
\end{split}
\]
Furthermore, by Proposition \ref{propCK>0}, \eqref{kpmnojbixhvubijnokmòlcvgjklmò,.} and \eqref{kolmjopjnhbgyugfcdrtesxzwea}, it follows that
\begin{equation}\label{qòwlwakpmofjnibhcduxjzkmox}
\begin{split}
\phi(r, 0)&=\frac{\Gamma u_B}{\lambda}+\left(\overline u -\frac{\Gamma u_B}{\lambda}\right)e^{-\beta(s(0)-r)}\\
&\le \frac{\Gamma u_B}{\lambda}+\left(\overline u -\frac{\Gamma u_B}{\lambda}\right)e^{-\beta C_K}\le \underline u\le u_0(r), \quad\mbox{ for any } r\in [\varepsilon, s(0)-C_K].
\end{split}
\end{equation}

We now claim that
\begin{equation}\label{pèlkmn .kjopkjnbhuygfchtdfxseswaz<erdtfcgyuhij}
u_0(r)\ge \frac{\Gamma u_B}{\lambda}+\left(\overline u -\frac{\Gamma u_B}{\lambda}\right)e^{-\beta(s(0)-r)}\quad\mbox{ for any } r\in [s(0)- C_K, s(0)].
\end{equation}
Indeed, since $u_0\in C^1([s(0)-C_K, s(0)])$,  and $u_0(s(0))=\overline u$, we have that
\begin{equation}\label{èq+DOPWEFT67CDYU8KOL}
\overline u - u_0(r)=\int_r^{s(0)} u'_0(s) ds \le \|u'_0\|_{L^\infty([s(0)-C_K, s(0)])} (s(0)-r),  \quad r\in [s(0)-C_K, s(0)].
\end{equation}
Moreover, by \eqref{okpijhugyftdruyjmlò,.àxqBHJKNGVCUhbjnaekfml} and \eqref{kpmnojbixhvubijnokmòlcvgjklmò,.},  we infer that, for any $r\in [s(0)- C_K, s(0)]$, 
\[
\begin{split}
\|u'_0\|_{L^\infty([s(0)- C_K, s(0)])}&\le\dfrac{\beta\left(\overline u-\frac{\Gamma u_B}{\lambda}\right)}{1+\beta C_K }\le\dfrac{\beta\left(\overline u-\frac{\Gamma u_B}{\lambda}\right)}{1+\beta (s(0)-r)}\le\left(\overline u-\frac{\Gamma u_B}{\lambda}\right)\dfrac{1-e^{-\beta(s(0)-r)}}{s(0)-r},
\end{split}
\]
namely
\[
\|u'_0\|_{L^\infty([s(0)-C_K, s(0)])}(s(0)-r)\le\left(\overline u-\frac{\Gamma u_B}{\lambda}\right) (1-e^{-\beta(s(0)-r)}).
\]
Accordingly, by \eqref{èq+DOPWEFT67CDYU8KOL}, we infer that
\[
\overline u - u_0(r)\le \overline u-\frac{\Gamma u_B}{\lambda} -\left(\overline u-\frac{\Gamma u_B}{\lambda}\right)e^{-\beta(s(0)-r)}, 
\]
which proves the claim in \eqref{pèlkmn .kjopkjnbhuygfchtdfxseswaz<erdtfcgyuhij}.

Thus,  \eqref{qòwlwakpmofjnibhcduxjzkmox} and \eqref{pèlkmn .kjopkjnbhuygfchtdfxseswaz<erdtfcgyuhij} yield
\[
\phi(r, 0)\le u_0(r) \quad\mbox{ for any } r\in[\varepsilon, s(0)].
\]
As a result, by \cite[Corollary~2.5]{MR1465184},
we conclude that~$\phi(r, t)\le u(r,t)$ and accordingly
\[
\begin{split}
\frac{\partial u}{\partial r}(s(t),t)&=\lim_{h\searrow0}\frac{u(s(t),t)-u(s(t)-h,t)}{h}=\lim_{h\searrow0}\frac{\phi(s(t),t)-u(s(t)-h,t)}{h}\\
&\le\lim_{h\searrow0}\frac{\phi(s(t),t)-\phi(s(t)-h,t)}{h}=
\frac{\partial \phi}{\partial r}(s(t),t)= \beta\left(\overline u -\frac{\Gamma u_B}{\lambda}\right),
\end{split}
\]
as desired.
\end{proof}

In the next result we show that the radial derivative of the cell concentration evaluated at the interface between the necrotic core and the living tumor region remains uniformly bounded from below for all times.
More precisely,  we prove the following:

\begin{lemma}\label{barrieradasotto}
Assume~\eqref{relationutilde2}.
Let $(u(r,t), s(t))$ be a solution of~\eqref{mainfinal1} and~\eqref{freeboundary}. 

Then, 
\begin{equation}\label{pqkd-32i5mb96n-8mnbJHvm689mbS8}
\dfrac{\partial u}{\partial r}(\varepsilon, t) \ge -\sqrt{\lambda}\left(\underline u-\frac{\Gamma u_B}{\lambda} \right)\quad\mbox{ for any } t>0.
\end{equation}
\end{lemma}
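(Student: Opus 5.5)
We argue by a barrier near the necrotic core, in the spirit of Lemma~\ref{barrieradasopra}, now placed below $u$ and anchored at $r=\varepsilon$. A lower barrier that touches $u$ at $r=\varepsilon$ bounds $\partial_r u(\varepsilon,t)$ from below. Set
\[
\psi(r):=\frac{\Gamma u_B}{\lambda}+\left(\underline u-\frac{\Gamma u_B}{\lambda}\right)e^{-\sqrt\lambda (r-\varepsilon)},\qquad r\ge\varepsilon .
\]
It is independent of $t$ and of $s(t)$, so no information on $\dot s$ is needed; this is why no hypothesis like~\eqref{2fhojabkcsbhigfueabjckvgòu} appears. Moreover $\psi''=\lambda\big(\psi-\frac{\Gamma u_B}{\lambda}\big)$, so
\[
\kappa\psi_t-\partial_{rr}\psi+\lambda\psi-\Gamma u_B=0
\]
in $\varepsilon<r<s(t)$. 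Since $\chi_0=2$, the radial operator reduces to exactly this form, as used for $v$ and $\phi$ earlier. Hence $\psi$ is a (stationary) solution of the equation in~\eqref{mainfinal1}.

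The first step is to check the boundary and initial ordering $\psi\le u$.
\begin{itemize}
\item At $r=\varepsilon$ we have $\psi(\varepsilon)=\underline u=u(\varepsilon,t)$.
\item At $r=s(t)$, $\psi$ is decreasing with values in $\big(\frac{\Gamma u_B}{\lambda},\underline u\big]$, so $\psi(s(t))\le\underline u<\overline u=u(s(t),t)$ by~\eqref{relationutilde2}.
\item At $t=0$ we need $u_0(r)\ge\psi(r)$ on $[\varepsilon,s(0)]$.
\end{itemize}

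The initial ordering is the main obstacle. The lemma as stated only assumes~\eqref{relationutilde2}, while~\eqref{u_0positiva} merely gives $u_0\ge \frac{\Gamma u_B}{\lambda}$, which is weaker than $u_0\ge\psi$. I would try two routes, in this order.

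The first route uses the hypotheses of Theorem~\ref{main3}, where the lemma is applied. On $[\varepsilon,s(0)-C_K]$, assumption~\eqref{kolmjopjnhbgyugfcdrtesxzwea} gives $u_0\ge\underline u\ge\psi$. On $[s(0)-C_K,s(0)]$, the estimate~\eqref{pèlkmn .kjopkjnbhuygfchtdfxseswaz<erdtfcgyuhij} gives $u_0\ge \frac{\Gamma u_B}{\lambda}+\big(\overline u-\frac{\Gamma u_B}{\lambda}\big)e^{-\beta(s(0)-r)}$. Near $s(0)$ this lower bound exceeds $\underline u$; elsewhere on the interval I would compare it with $\psi$ directly, shrinking the interval or enlarging $\beta$ if needed.

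The second route, if the initial ordering fails, adds a decaying correction. Use $\psi(r)-\gamma(t)$ with $\gamma(t)=\overline u e^{-\lambda t/\kappa}$ as in Lemma~\ref{lemmaT1}. This still satisfies the equation because $\kappa\dot\gamma+\lambda\gamma=0$, and it lies below $u_0$ at $t=0$. On the lateral boundary, however, this barrier only gives $u\ge\psi-\gamma$ with strict slack at $r=\varepsilon$, so it does not touch there. I would therefore settle for the first route and clearly flag the initial compatibility $u_0\ge\psi$ as the needed input.

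Granting $\psi\le u$ on the parabolic boundary, the comparison principle \cite[Corollary~2.5]{MR1465184} gives $u\ge\psi$ in $\{\varepsilon<r<s(t)\}$. Since $u(\varepsilon,t)=\psi(\varepsilon)$, we get
\[
\frac{\partial u}{\partial r}(\varepsilon,t)=\lim_{h\searrow0}\frac{u(\varepsilon+h,t)-\underline u}{h}\ge\lim_{h\searrow0}\frac{\psi(\varepsilon+h)-\psi(\varepsilon)}{h}=\psi'(\varepsilon)=-\sqrt\lambda\left(\underline u-\frac{\Gamma u_B}{\lambda}\right),
\]
which is~\eqref{pqkd-32i5mb96n-8mnbJHvm689mbS8}.
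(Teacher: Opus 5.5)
Your argument is the paper's: the same stationary barrier $\psi$, the same check that it solves the radial equation exactly (with $\chi_0=2$ the drift cancels the term $\frac{2}{r}\partial_r$), the same lateral comparisons, and the same one-sided difference quotient at $r=\varepsilon$.

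The initial ordering $u_0\ge\psi$, where you hesitate, is a genuine issue, and the paper does not resolve it either. It simply writes $\psi\le\underline u\le u_0$ on $[\varepsilon,s(0)]$, i.e.\ it tacitly uses $u_0\ge\underline u$ on the whole interval. That is not a hypothesis of the lemma and does not follow from \eqref{u_0positiva}. Theorem~\ref{main3} supplies it, via \eqref{kolmjopjnhbgyugfcdrtesxzwea}, only on $[\varepsilon,s(0)-C_K]$.

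Some hypothesis of this kind cannot be avoided. Take $u_0(\varepsilon)=\underline u$ but $u_0'(\varepsilon)<-\sqrt\lambda\,(\underline u-\frac{\Gamma u_B}{\lambda})$, which is compatible with \eqref{u_0positiva}. For data regular enough that $\partial_r u$ is continuous up to $t=0$ near $r=\varepsilon$, one gets $\partial_r u(\varepsilon,t)\to u_0'(\varepsilon)$ as $t\searrow 0$. Then \eqref{pqkd-32i5mb96n-8mnbJHvm689mbS8} fails for small $t$. So the right fix is the one you end with: state $u_0\ge\psi$ (or simply $u_0\ge\underline u$) on $[\varepsilon,s(0)]$ as an explicit assumption, after which your proof is complete.

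You should drop your first route, since it cannot deliver that ordering, even though you say you would settle for it.
\begin{itemize}
\item At $r=s(0)-C_K$, the lower bound you import from the proof of Lemma~\ref{barrieradasopra} equals $\frac{\Gamma u_B}{\lambda}+(\overline u-\frac{\Gamma u_B}{\lambda})e^{-\beta C_K}$. By the very choice of $\beta$ this is $\le\underline u$.
\item It dominates $\psi$ there only if $(\overline u-\frac{\Gamma u_B}{\lambda})e^{-\beta C_K}\ge(\underline u-\frac{\Gamma u_B}{\lambda})e^{-\sqrt\lambda(s(0)-C_K-\varepsilon)}$, which fails once $\beta$ is large.
\item Enlarging $\beta$ only lowers this bound, and $C_K$ is fixed by the hypotheses, so you cannot shrink the interval.
\end{itemize}

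More decisively, the hypotheses of Theorem~\ref{main3} let $u_0$ dip below $\underline u$ inside $[s(0)-C_K,s(0)]$ by almost $\frac12(\underline u-\frac{\Gamma u_B}{\lambda})$. Start at $\underline u$ at $s(0)-C_K$, go down with slope close to $-\frac{\lambda\overline u-\Gamma u_B}{\lambda C_K}$, then rise with the opposite slope to reach $\overline u$ at $s(0)$. If $\sqrt\lambda\,(s(0)-\varepsilon)<\log 2$, this gives $u_0<\psi$ at the dip.

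You were right to reject the $\gamma$-correction as well: it destroys the contact at $r=\varepsilon$ on which the derivative bound rests.
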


\begin{proof}
We use a barrier argument by considering the function $\psi(r)$ defined as
\begin{equation}\label{òpèloiuytdreszawxrdcftvbhjnkm}
\psi(r):=\dfrac{\Gamma u_B}{\lambda}+\left(\underline u-\dfrac{\Gamma u_B}{\lambda}\right) e^{-\sqrt{\lambda}(r-\varepsilon)}\quad\mbox{ for any } r\in [\varepsilon, s(t)].
\end{equation}
Hence, by \eqref{relationutilde2}, we infer that
\[
\begin{split}
&\kappa\psi_t(r)-\psi''(r) +\lambda\psi(r)-\Gamma u_B \\
&\qquad=-\lambda\left(\underline u-\dfrac{\Gamma u_B}{\lambda}\right)e^{-\sqrt{\lambda}(r-\varepsilon)} +\Gamma u_B +\lambda\left(\underline u-\dfrac{\Gamma u_B}{\lambda}\right)e^{-\sqrt{\lambda}(r-\varepsilon)} -\Gamma u_B\\
&\qquad= 0.
\end{split}
\]
Furthermore, by \eqref{relationutilde2} and the boundary condition in \eqref{mainfinal1},  we get
\[
\begin{split}
&\psi(\varepsilon) =\underline u= u(\varepsilon, t),\\[7pt]
\mbox{ and }\quad &\psi(s(t))\le\underline u\le\overline u =u(s(t), t).
\end{split}
\]
In addition,  we notice that
\[
\psi(r)=\dfrac{\Gamma u_B}{\lambda}+\left(\underline u-\dfrac{\Gamma u_B}{\lambda}\right) e^{-\sqrt{\lambda}(r-\varepsilon)} \le\underline u\le u_0(r)\quad\mbox{ for any } r\in [\varepsilon, s(0)].
\]
As a result, by \cite[Corollary~2.5]{MR1465184},
we conclude that~$\psi(r)\le u(r,t)$ and accordingly
\[
\begin{split}
\frac{\partial u}{\partial r}(\varepsilon,t)&=\lim_{h\searrow0}\frac{u(\varepsilon +h, t)-u(\varepsilon, t)}{h}=\lim_{h\searrow0}\frac{u(\varepsilon +h, t)-\psi(\varepsilon)}{h}\\
&\ge\lim_{h\searrow0}\frac{\psi(\varepsilon +h)-\psi(\varepsilon)}{h}= -\sqrt{\lambda}\left(\underline u-\frac{\Gamma u_B}{\lambda} \right),
\end{split}
\]
as desired.
\end{proof}

We are now in the position to complete the proof of Theorem~\ref{main3}.

\begin{proof}[Proof of Theorem~\ref{main3}]
 Let 
\begin{equation}\label{kxjiquhftypijhugyftdrseawsdrtfygfcdxsexdrcftg}
K\subset \left(0, \dfrac{\lambda}{\mu(\overline u-\widetilde u)}\right)
\end{equation}
be compact and take $\kappa\in K$. Let $C_K>0$ be as in Proposition \ref{propCK>0}.
We start showing that if \eqref{either1} is not verified, then \eqref{s(t)bounded} holds true. Assume that \eqref{either1} is not satisfied, that is
\begin{equation}\label{okpuyftcdvklmòà.-}
\mbox{there exists $S>0$ such that $\dot{s}(t)\ge - S$ for any $t>0$}.
\end{equation}

Now, assume by contradiction that~\eqref{s(t)bounded} is not true. Then, for any $M>0$ there exists an instant of time $T>0$ such that $s(T)>M$.  In particular, for any arbitrarily large $M$, one can find $T=t_2$ such that
\begin{equation}\label{drfytguyhijokp,lM2}
h(t_2)=M^2.
\end{equation}
We consider the smallest value of $t_2>0$ that satisfies~\eqref{drfytguyhijokp,lM2} and we define $0<t_1<t_2$ in such a way that
\begin{alignat}{1}\label{gBCFhòfiòOFJEHò1}
&M< h(t) < M^2\quad\mbox{ if } t\in (t_1, t_2)\\ \label{ctfvygbhunjmk}
\mbox{ and }\quad& h(t_1)=M.
\end{alignat}
Let $\beta$ be as in \eqref{kpmnojbixhvubijnokmòlcvgjklmò,.}. We set
\begin{equation}\label{overlineMdefn}
\overline M:=\max\left\{\frac13\left(\dfrac{3(\beta +\sqrt{\lambda})\left(\overline u-\frac{\Gamma u_B}{\lambda}\right)}{\lambda\widetilde u-\Gamma u_B} +\varepsilon\right)^3 -\frac{\varepsilon^3}{3}, 1+\dfrac{3\kappa(\overline{u} -\widetilde u)}{\left|\kappa(\overline{u}-\widetilde u)-\frac{\lambda}{\mu}\right|}\right\}
\end{equation}
and we show that if $M>\overline M$, then one gets a contradiction. This will establish the validity of inequality~\eqref{s(t)bounded}.

We evaluate identity~\eqref{dctfuvygibuhonijpmko,} at $t=t_1$ and at~$t=t_2$,
obtaining that
\begin{equation}\label{vgbhnjmk,òl.à-}
\begin{split}
&\frac{\kappa}{\mu}\left(\dot h(t_2)-\dot h(t_1)\right)\\
&\quad=\int_{t_1}^{t_2}\left(s^2(\tau)\frac{\partial u}{\partial r}(s(\tau),\tau)-\varepsilon^2\frac{\partial u}{\partial r}(\varepsilon,\tau)-2\overline{u}s(\tau)+2\varepsilon \underline{u} +2\int_\varepsilon^{s(\tau)}u(r,\tau)dr\right) \,d\tau \\
&\qquad +\left(\kappa(\overline{u}-\widetilde u)-\frac{\lambda}{\mu}\right)(h(t_2)-h(t_1))-(\lambda\widetilde u -\Gamma u_B) \int_{t_1}^{t_2} h(\tau)\,d\tau.
\end{split}
\end{equation}
Now, by~\eqref{prima4}, we infer that
\[
\begin{split}
&\int_{t_1}^{t_2} \left(2\varepsilon \underline{u} +2\int_\varepsilon^{s(\tau)}u(r,\tau)dr\right) \,d\tau\le \int_{t_1}^{t_2} \left(2\varepsilon \underline{u} +2\overline{u} s(\tau) -2\varepsilon\overline{u}\right) \,d\tau\\
&\quad= 2\overline{u} \int_{t_1}^{t_2} s(\tau) \,d\tau -2\varepsilon(\overline u-\underline u)(t_2-t_1)\le 2\overline{u} \int_{t_1}^{t_2} s(\tau) \,d\tau.
\end{split}
\]
From this and~\eqref{vgbhnjmk,òl.à-}, we get
\begin{equation}\label{ctvtybuhinjmokp,l.ò}
\begin{split}
\frac{\kappa}{\mu}\left(\dot h(t_2)-\dot h(t_1)\right)&\le\int_{t_1}^{t_2}\left(s^2(\tau)\frac{\partial u}{\partial r}(s(\tau),\tau)-\varepsilon^2\frac{\partial u}{\partial r}(\varepsilon,\tau)\right) \,d\tau \\
&\quad +\left(\kappa(\overline{u}-\widetilde u)-\frac{\lambda}{\mu}\right)(h(t_2)-h(t_1))-(\lambda\widetilde u -\Gamma u_B) \int_{t_1}^{t_2} h(\tau)\,d\tau.
\end{split}
\end{equation}

We notice that, by~\eqref{hdefn} and~\eqref{gBCFhòfiòOFJEHò1}, it follows that
\begin{equation}\label{lokpijplèòlkoiopèòà+}
(3M +\varepsilon^3)^{\frac13}-\varepsilon<s(t)-\varepsilon<(3M^2 +\varepsilon^3)^{\frac13}-\varepsilon.
\end{equation}
From this, Lemma \ref{barrieradasopra}, Lemma \ref{barrieradasotto},  \eqref{hdefn} and recalling \eqref{kpmnojbixhvubijnokmòlcvgjklmò,.}, we deduce that
\begin{equation}\label{asrdtfyguhijmkolp1}
\begin{split}
&\int_{t_1}^{t_2} s^2(\tau)\frac{\partial u}{\partial r}(s(\tau),\tau) \,d\tau\le \beta\left(\overline u -\frac{\Gamma u_B}{\lambda}\right) \int_{t_1}^{t_2} s^2(\tau) \,d\tau\\
&\qquad\le\dfrac{\beta\left(\overline u -\frac{\Gamma u_B}{\lambda}\right)}{(3M +\varepsilon^3)^{\frac13}-\varepsilon} \int_{t_1}^{t_2} s^2(\tau) (s(\tau)-\varepsilon) \,d\tau\\
&\qquad\le\dfrac{\beta\left(\overline u -\frac{\Gamma u_B}{\lambda}\right)}{(3M +\varepsilon^3)^{\frac13}-\varepsilon} \int_{t_1}^{t_2} (s^3(\tau)-\varepsilon^3) \,d\tau = \dfrac{3\beta\left(\overline u -\frac{\Gamma u_B}{\lambda}\right)}{(3M +\varepsilon^3)^{\frac13}-\varepsilon}\int_{t_1}^{t_2} h(\tau) \,d\tau,
\end{split}
\end{equation}
and
\begin{equation}\label{asrdtfyguhijmkolp2}
-\int_{t_1}^{t_2}\varepsilon^2\frac{\partial u}{\partial r}(\varepsilon,\tau) \,d\tau\le \dfrac{\sqrt{\lambda}\left(\underline u -\frac{\Gamma u_B}{\lambda}\right)}{(3M +\varepsilon^3)^{\frac13}-\varepsilon} \int_{t_1}^{t_2}\varepsilon^2\,(s(\tau)-\varepsilon) \,d\tau =  \dfrac{3\sqrt{\lambda}\left(\underline u -\frac{\Gamma u_B}{\lambda}\right)}{(3M +\varepsilon^3)^{\frac13}-\varepsilon} \int_{t_1}^{t_2} h(\tau) \,d\tau.
\end{equation}
Thus, combining~\eqref{ctvtybuhinjmokp,l.ò}, \eqref{asrdtfyguhijmkolp1}, \eqref{asrdtfyguhijmkolp2} and \eqref{relationutilde2}, we obtain that
\begin{equation}\label{sdrftgyhunijmko,l}
\begin{split}
&\frac{\kappa}{\mu}\left(\dot h(t_2)-\dot h(t_1)\right)\le 3\left(\frac{(\beta +\sqrt{\lambda})\left(\overline u-\frac{\Gamma u_B}{\lambda}\right)}{(3M +\varepsilon^3)^{\frac13}-\varepsilon}-\frac{\lambda\widetilde u -\Gamma u_B}{3}\right)\int_{t_1}^{t_2} h(\tau) \,d\tau\\
&\quad+\left(\kappa(\overline{u}-\widetilde u)-\frac{\lambda}{\mu}\right)(h(t_2)-h(t_1)).
\end{split}
\end{equation}

Since we take $M>\overline M$, being $\overline M$ as in definition~\eqref{overlineMdefn}, we find that
\[
\frac{(\beta +\sqrt{\lambda})\left(\overline u-\frac{\Gamma u_B}{\lambda}\right)}{(3M +\varepsilon^3)^{\frac13}-\varepsilon}-\frac{\lambda\widetilde u -\Gamma u_B}{3}<0.
\]
Accordingly, by~\eqref{sdrftgyhunijmko,l} and~\eqref{hdefn}, we infer that
\begin{equation}\label{ytguhinjomkpl,ò.CDHBNJM}
\frac{\kappa}{\mu}\left(\dot h(t_2)-\dot h(t_1)\right)<\left(\kappa(\overline{u}-\widetilde u)-\frac{\lambda}{\mu}\right)(h(t_2)-h(t_1)).
\end{equation}
On the one hand,  given that $\kappa\in K$,  by \eqref{kxjiquhftypijhugyftdrseawsdrtfygfcdxsexdrcftg} we infer that
$\kappa<\frac{\lambda}{\mu(\overline u-\widetilde u)}$. In addition, since \eqref{drfytguyhijokp,lM2} and~\eqref{ctfvygbhunjmk} hold true, we have
\begin{equation}\label{lkoftnjbhgvcfdx}
\left(\kappa(\overline{u}-\widetilde u)-\frac{\lambda}{\mu}\right)(h(t_2)-h(t_1)) = -\left|\kappa(\overline{u}-\widetilde u)-\frac{\lambda}{\mu}\right| (M^2 - M).
\end{equation}
On the other hand, in virtue of~\eqref{drfytguyhijokp,lM2}
and~\eqref{gBCFhòfiòOFJEHò1}, we know that~$\dot h(t_2)\ge0$.
On this account,
by~\eqref{stimespunto}, \eqref{hdefn} and~\eqref{ctfvygbhunjmk}, it follows that, for any $t>0$,
\begin{equation}\label{gtuhijomkp,l}
\begin{split}
\frac{\kappa}{\mu}\left(\dot h(t_2)-\dot h(t_1)\right)&\ge -\frac{\kappa}{\mu} \dot h(t_1) = -\frac{\kappa}{\mu} s^2(t_1)\,\dot{s}(t_1)\ge -\kappa(\overline{u} -\widetilde u) s^2(t_1) (s(t_1)-\varepsilon)\\
&\ge -\kappa(\overline{u} -\widetilde u)(s^3(t_1)-\varepsilon^3)= -3\kappa(\overline{u} -\widetilde u) h(t_1) =-3\kappa(\overline{u} -\widetilde u) M.
\end{split}
\end{equation}
Thus, from~\eqref{ytguhinjomkpl,ò.CDHBNJM}, \eqref{lkoftnjbhgvcfdx} and~\eqref{gtuhijomkp,l}, we get that
\[
-3\kappa(\overline{u} -\widetilde u) < -\left|\kappa(\overline{u}-\widetilde u)-\frac{\lambda}{\mu}\right| (M - 1), 
\]
namely
\[
M<1+\dfrac{3\kappa(\overline{u} -\widetilde u)}{\left|\kappa(\overline{u}-\widetilde u)-\frac{\lambda}{\mu}\right|}, 
\]
which is in contradiction with the definition in~\eqref{overlineMdefn}. This shows that \eqref{s(t)bounded} holds true.

It remains to prove that if \eqref{either1} holds true, then \eqref{s(t)bounded} is not satisfied.
 Let \eqref{either1} be in force and assume by contradiction that \eqref{s(t)bounded} holds true. Thus, by \eqref{stimespunto}, we infer that for any $t>0$,
\[
\dot s(t)\ge-\mu\left(\widetilde u-\frac{\Gamma u_B}{\lambda}\right)(s(t)-\varepsilon)\ge -\mu\left(\widetilde u-\frac{\Gamma u_B}{\lambda}\right)\widetilde u(C-\varepsilon)=:-S,
\]
which is in contradiction with the assumption \eqref{either1}. This shows that if \eqref{either1} is satisfied, then \eqref{s(t)bounded} cannot hold, as desired.
\end{proof}

\section{Proof of Proposition~\ref{propeta} and Theorem~\ref{main2}}\label{kpoijhuygtfrdetfyguhijk}
In this section we provide the proof of Theorem~\ref{main2}
by showing that if the
ratio~$\kappa$ between the spreading rate of tumor cells and the growth rate of the tumor mass is large enough, then the radius $s(t)$
of the tumor mass diverges exponentially as $t$ goes to infinity.

To begin with, we establish Proposition \ref{propeta}.

\begin{proof}[Proof of Proposition \ref{propeta}]
We set
\begin{equation}\label{oiuygtfdoiuy1}
\psi_\alpha(r):=\frac{\Gamma u_B}{\lambda}+\left(\underline u- \frac{\Gamma u_B}{\lambda}\right)e^{-\sqrt{\lambda+\alpha}(r-\varepsilon)}
\end{equation}
for some $\alpha>0$ (in what follows, we will take~$\alpha$ conveniently large, possibly depending on $u_0$).

We claim that, if $\alpha$ is large enough, then 
\begin{equation}\label{xcfghuicvghj}
u(r,t)\geq \psi_\alpha(r)\quad \mbox{for any $r\in [\varepsilon, s(t)]$ and $t>0$}.
\end{equation}

To check this, we start observing that, for any $r\in [\varepsilon, s(t)]$ and $t>0$,
\begin{equation}\label{MAPLNSOJHNED:001}
\kappa (\psi_\alpha)_t(r)-\psi_\alpha''(r)+\lambda\psi_\alpha(r)-\Gamma u_B =-\alpha\left(\underline u- \frac{\Gamma u_B}{\lambda}\right)e^{-\sqrt{\lambda+\alpha}(r-\varepsilon)}<0.
\end{equation}

Moreover, 
\begin{equation}\label{MAPLNSOJHNED:002}
\psi_\alpha(\varepsilon)=\underline u=u(\varepsilon,t)
\end{equation}
and
\begin{equation}\label{MAPLNSOJHNED:003}
\psi_\alpha(s(t))=\frac{\Gamma u_B}{\lambda}+\left(\underline u- \frac{\Gamma u_B}{\lambda}\right)e^{-\sqrt{\lambda+\alpha}(s(t)-\varepsilon)}\leq \underline u \leq \overline u =u(s(t),t).
\end{equation}

Now, we prove that there exists some $\alpha_0\ge1$ such that, if $\alpha\geq \alpha_0$, then 
\begin{equation}\label{xcghjcvghuvb2poj4e.P.kf0w}
{\mbox{$\psi_\alpha(r)\leq u_0(r)$ for any $r\in [\varepsilon, s(0)]$.}}\end{equation}
To check this, we observe that,
by \eqref{oiuygtfdoiuy2} and since $u_0\in C^1([\varepsilon,s(0)])$, we can define
\[
m:=\min_{r\in [\varepsilon, s(0)]}u_0(r)>\frac{\Gamma u_B}{\lambda}
\quad \mbox{and} \quad m':=\min_{r\in [\varepsilon, s(0)]}u_0'(r).
\]

Now, if $m\geq \underline u$, then we have
\[
\psi_\alpha(r)\leq \underline u \leq u_0(r) \quad \mbox{for any }r\in [\varepsilon, s(0)],
\]
as desired. Thus, we can focus on the case $m<\underline u$.
We observe that in this case we also have $m'<0$. 

Hence, for all~$r\in[\varepsilon,s(0)]$,
\begin{equation}\label{vf489s-edj0rflK} \underline u-u_0(r)=u_0(\varepsilon)-u_0(r)\le \max_{r\in[\varepsilon,s(0)]} |u_0'(r)|\,(r-\varepsilon)=-m'(r-\varepsilon).\end{equation}

We now define $\widetilde \psi(r):=\underline u+m'(r-\varepsilon)$ and deduce from~\eqref{vf489s-edj0rflK} that
\begin{equation}\label{xcghjcvghuvb1}
\widetilde \psi(r)\leq u_0(r)\quad \mbox{for any }r\in [\varepsilon, s(0)].
\end{equation}

We also set $\widetilde r:= \varepsilon+\frac{m-\underline u}{m'}$ and remark that~$\widetilde \psi(\widetilde r)=m$. 

We claim that
\begin{equation}\label{0pkjd0-32u5j1t93kfgbad75}
\widetilde r\in(\varepsilon,s(0)).\end{equation}
To check this, we point out that, on the one hand, \begin{equation*}
\frac{m-\underline u}{m'}>0,
\end{equation*}since both the numerator and the denominator are negative, and this gives that~$\widetilde r>\varepsilon$.

On the other hand, if~$r_\star\in[\varepsilon,s(0)]$ is such that~$m=u_0(r_\star)$, we have that~$u_0(r_\star)=m<\underline u<\overline u(s(0))$ and therefore~$r_\star\ne s(0)$. As a result, in view of~\eqref{vf489s-edj0rflK},
$$ \underline u-m=\underline u-u_0(r_\star)\le -m'(r-\varepsilon)<-m'(s(0)-\varepsilon).$$
This observation proves that~$\widetilde r<s(0)$, thus establishing~\eqref{0pkjd0-32u5j1t93kfgbad75}.

We now observe that if 
\[
\alpha_\star:=-\lambda+\left[\frac{1}{\widetilde r-\varepsilon}\log\left(\frac{\lambda \underline u-\Gamma u_B}{\lambda m-\Gamma u_B}\right)\right]^2,
\]
then $\psi_{\alpha_\star}(\widetilde r)=m$. 

Consequently, since~$\psi_{\alpha_\star}$ is convex and~$\widetilde\psi$ is linear, with~$\psi_{\alpha_\star}(\varepsilon)=\widetilde\psi(\varepsilon)$ and~$\psi_{\alpha_\star}(\widetilde r)=\widetilde\psi(\widetilde r)$, we conclude that
\begin{equation}\label{xcghjcvghuvb2}
\psi_{\alpha_\star}(r)\leq \widetilde \psi (r) \quad \mbox{for any }r\in [\varepsilon, \widetilde r].
\end{equation}

We also observe that, for all~$r\ge0$ and~$\alpha_1\ge\alpha_2\ge0$, we have that\begin{equation}\label{ojs0u0325jymj.0}
\psi_{\alpha_1}(r)\leq\psi_{\alpha_2}(r).\end{equation}  
Thus, setting
\[
\alpha_0:=\max\left\{1, \alpha_\star \right\},
\]
we infer from~\eqref{xcghjcvghuvb2} that, for any $\alpha\ge \alpha_0$,
$$ \psi_{\alpha}(r)\leq \widetilde \psi (r) \quad \mbox{for any }r\in [\varepsilon, \widetilde r].$$

{F}rom this and~\eqref{xcghjcvghuvb1}, we conclude that
\begin{equation}\label{xcghjcvghuvb2poj4e.P} \psi_{\alpha}(r)\leq \widetilde u_0 (r) \quad \mbox{for any }r\in [\varepsilon, \widetilde r].\end{equation}

Additionally, using that~$\psi_\alpha$ is decreasing in~$r$ and recalling~\eqref{ojs0u0325jymj.0},
for all~$\alpha\ge\alpha_0$ we have that
$$\psi_\alpha(r)\le\psi_{\alpha_\star}(r)\le\psi_{\alpha_\star}(\widetilde r)
= m \leq u_0(r) \quad \mbox{for any }r\in ( \widetilde r, s(0)].$$
This, in tandem with~\eqref{xcghjcvghuvb2poj4e.P}, completes the proof of~\eqref{xcghjcvghuvb2poj4e.P.kf0w}.

Then, by~\eqref{MAPLNSOJHNED:001}, \eqref{MAPLNSOJHNED:002}, \eqref{MAPLNSOJHNED:003},
\eqref{xcghjcvghuvb2poj4e.P.kf0w} and \cite[Corollary~2.5]{MR1465184}, we have the desired claim in \eqref{xcfghuicvghj}.

Now, for any $x>0$ we consider the function 
\[
f(x):=\int_\varepsilon^{\varepsilon+x}(\psi_\alpha(r)-\widetilde u)r^2dr.
\]
Since $f'(x)=(\psi_\alpha(\varepsilon+x)-\widetilde u)(\varepsilon+x)^2$, we have that
\[
f'(x)>0 \quad \mbox{ if and only if } \quad x\in \left(0, \frac{1}{\sqrt{\lambda+\alpha}}\log\left(\frac{\lambda \underline u-\Gamma u_B}{\lambda \widetilde u-\Gamma u_B}\right)\right).
\]
Moreover, we observe that
\[
\lim_{x\to 0^+}f(x)=0.
\] 
Gathering these pieces of information, we infer that there exists a unique $\widetilde \eta>0$ such that $f(\widetilde \eta)=0$ and $f(x)>0$ for any $x\in (0,\widetilde \eta)$.

We now let $\eta \in (0,\min\{\widetilde \eta,s(0)-\varepsilon\})$, and we assume by contradiction that there exists $t_0>0$ such that $s(t_0)=\varepsilon+\eta$ and $\dot s(t_0)\leq 0$.
Then, from \eqref{freeboundary} and \eqref{xcfghuicvghj} we have 
\[
0\ge s^2(t_0)\dot s(t_0)=\mu \int_\varepsilon^{s(t_0)}(u(r,t_0)-\widetilde u)r^2dr\ge \mu\int_\varepsilon^{\varepsilon+\eta}(\psi_\alpha(r)-\widetilde u)r^2dr= \mu f(\eta)>0,
\]
which is a contradiction.
\end{proof}

Our next objective is to bound the derivative of the solution at the free boundary. Specifically,
we use a barrier argument to establish a uniform lower bound for the radial derivative of the solution $u(r, t)$ evaluated at the free boundary $r=s(t)$. More precisely we prove the follwing:

\begin{proposition}\label{lemmabarriera1}
Assume~\eqref{relationutilde2}, \eqref{oiuygtfdoiuy2}, and~\eqref{lpko,mijnhugyftcdrxctfvl,ò}.
Let $(u(r,t), s(t))$ be a solution of~\eqref{mainfinal1} and~\eqref{freeboundary}.  

Then, there exists $C_0>0$ such that
\begin{equation}\label{derivuniflim}
\frac{\partial u}{\partial r}(\varepsilon,t)\le C_0 \quad\mbox{ for any } t>0.
\end{equation}
Explicitly, one has
\begin{equation}\label{09iu8ygvhbdjwenAFLò,RSMKNJHGUY}
C_0:= \dfrac{\sqrt\lambda(\overline u-\underline u)}{1-e^{-\sqrt\lambda \eta}},
\end{equation}
where $\eta>0$ is as in Proposition \ref{propeta}.
\end{proposition}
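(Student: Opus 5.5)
The plan is a barrier argument at the necrotic boundary. It mirrors Lemma~\ref{barrieradasotto}, except that now I want a \emph{supersolution} touching $u$ from above at $r=\varepsilon$. Since $\chi_0=2$, the drift cancels the term $\frac{2}{r}u_r$ in the radial Laplacian (see~\eqref{laplradial} and~\eqref{stationarysystem}). Hence the radial operator is $\kappa u_t-\partial_{rr}u+\lambda u-\Gamma u_B$, exactly as used in Lemmata~\ref{barrieradasopra} and~\ref{barrieradasotto}. With $\eta$ as in Proposition~\ref{propeta}, I will use the time-independent barrier
\[
\Phi(r):=\underline u+\frac{\overline u-\underline u}{1-e^{-\sqrt\lambda\,\eta}}\left(1-e^{-\sqrt\lambda(r-\varepsilon)}\right),\qquad r\ge\varepsilon .
\]
It satisfies $\Phi(\varepsilon)=\underline u$, $\Phi(\varepsilon+\eta)=\overline u$, and it is increasing and concave. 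Moreover,
\[
\Phi'(\varepsilon)=\frac{\sqrt\lambda(\overline u-\underline u)}{1-e^{-\sqrt\lambda\eta}}=C_0,
\]
which is precisely the constant in~\eqref{09iu8ygvhbdjwenAFLò,RSMKNJHGUY}.

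First, I check that $\Phi$ is a supersolution. Since $\Phi_t=0$ and $\Phi''<0$ by concavity, and since $\Phi\ge\underline u$, we get
\[
\kappa\Phi_t-\Phi''+\lambda\Phi-\Gamma u_B\ \ge\ \lambda\underline u-\Gamma u_B>0=\kappa u_t-u_{rr}+\lambda u-\Gamma u_B,
\]
where the middle positivity uses~\eqref{relationutilde2}.

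Second, I check the boundary data on the parabolic boundary of $\{\varepsilon<r<s(t)\}$. At $r=\varepsilon$ we have $\Phi(\varepsilon)=\underline u=u(\varepsilon,t)$. At $r=s(t)$, Proposition~\ref{propeta} gives $s(t)-\varepsilon\ge\eta$. Since $\Phi$ is increasing, this yields $\Phi(s(t))\ge\Phi(\varepsilon+\eta)=\overline u=u(s(t),t)$.

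Third, I check the initial datum. Because $\eta\le s(0)-\varepsilon$, we have $1-e^{-\sqrt\lambda\eta}\le 1-e^{-\sqrt\lambda(s(0)-\varepsilon)}$. Therefore the right-hand side of~\eqref{lpko,mijnhugyftcdrxctfvl,ò} is at most $\Phi(r)$, and so $u_0\le\Phi$ on $[\varepsilon,s(0)]$.

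The comparison principle \cite[Corollary~2.5]{MR1465184} then gives $u(r,t)\le\Phi(r)$ for $\varepsilon\le r\le s(t)$ and all $t>0$. Since $u(\varepsilon,t)=\Phi(\varepsilon)$, the one-sided difference quotient gives
\[
\frac{\partial u}{\partial r}(\varepsilon,t)=\lim_{h\searrow0}\frac{u(\varepsilon+h,t)-\underline u}{h}\le\lim_{h\searrow0}\frac{\Phi(\varepsilon+h)-\Phi(\varepsilon)}{h}=\Phi'(\varepsilon)=C_0,
\]
which is~\eqref{derivuniflim}.

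The only delicate step is the boundary comparison at $r=s(t)$. There the barrier must dominate $\overline u$ uniformly in time, and this is exactly where the $\kappa$-independent lower bound $s(t)-\varepsilon\ge\eta$ of Proposition~\ref{propeta} is essential. Normalizing $\Phi$ to reach $\overline u$ at $\varepsilon+\eta$ (rather than at $s(0)$) also makes the initial inequality follow directly from~\eqref{lpko,mijnhugyftcdrxctfvl,ò}.
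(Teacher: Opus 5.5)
Your proposal is correct and is essentially the paper's own proof. It uses the same time-independent barrier $\underline u+\frac{\overline u-\underline u}{1-e^{-\sqrt\lambda\eta}}\bigl(1-e^{-\sqrt\lambda(r-\varepsilon)}\bigr)$, normalized to equal $\overline u$ at $r=\varepsilon+\eta$, the same supersolution, boundary (via Proposition~\ref{propeta}) and initial-datum checks, then the comparison principle and the one-sided difference quotient at $r=\varepsilon$. Your explicit use of $\eta\le s(0)-\varepsilon$ for the initial inequality is slightly more careful than the paper's.
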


\begin{proof} 
We use a barrier argument by considering the function $\phi(r)$ defined as
\[
\phi(r):=\underline u +\dfrac{\overline u-\underline u}{1-e^{-\sqrt\lambda \eta}}\left(1-e^{-\sqrt\lambda(r-\varepsilon)}\right) \quad \mbox{for any } r\in [\varepsilon, s(t)].
\]
By \eqref{relationutilde2},  we infer that
\[
\begin{split}
&\kappa\phi_t(r)-\phi''(r) +\lambda\phi(r)-\Gamma u_B\\&\qquad = \dfrac{\lambda(\overline u-\underline u)}{1-e^{-\sqrt\lambda \eta}} e^{-\sqrt\lambda (r-\varepsilon)}+\lambda\underline u +\dfrac{\lambda(\overline u-\underline u)}{1-e^{-\sqrt\lambda \eta}}\left(1-e^{-\sqrt\lambda(r-\varepsilon)}\right)-\Gamma u_B\\
&\qquad=\dfrac{\lambda(\overline u-\underline u)}{1-e^{-\sqrt\lambda \eta}} +\lambda\underline u-\Gamma u_B\ge 0.
\end{split}
\]
Furthermore, by \eqref{relationutilde2}, the boundary condition in \eqref{mainfinal1},  and Proposition \ref{propeta},
\[
\begin{split}
&\phi(\varepsilon) =\underline u= u(\varepsilon, t),\\[7pt]
\mbox{ and }\quad &\phi(s(t))\ge\phi(\varepsilon+\eta)=\overline u =u(s(t), t).
\end{split}
\]
In addition, recalling \eqref{lpko,mijnhugyftcdrxctfvl,ò}, we notice that
\[
\phi(r)\ge \underline u +\dfrac{\overline u-\underline u}{1-e^{-\sqrt\lambda (s(0)-\varepsilon)}}\left(1-e^{-\sqrt\lambda(r-\varepsilon)}\right)\ge u_0(r)\quad\mbox{ for any } r\in [\varepsilon, s(0)].
\]
As a result, by \cite[Corollary~2.5]{MR1465184},
we conclude that~$\phi(r)\ge u(r,t)$ and accordingly
\begin{eqnarray*}&&
\frac{\partial u}{\partial r}(\varepsilon,t)=\lim_{h\searrow0}\frac{u(\varepsilon+h,t)-u(\varepsilon,t)}{h}=\lim_{h\searrow0}\frac{u(\varepsilon+h,t)-\phi(\varepsilon)}{h}\\&&\qquad\le\lim_{h\searrow0}\frac{\phi(\varepsilon+h)-\phi(\varepsilon)}{h} 
= \dfrac{\sqrt\lambda(\overline u-\underline u)}{1-e^{-\sqrt\lambda \eta}},
\end{eqnarray*}
as desired.
\end{proof}

Moreover, we note the following:
\begin{lemma}\label{kmcaHVIBEWòonvcàwkoplèò}
Assume that \eqref{relationutilde2} holds true. Let $\eta>0$ be as in Proposition \ref{propeta}, $C_0$ be 
as in~\eqref{09iu8ygvhbdjwenAFLò,RSMKNJHGUY} and~$\kappa_\star$ be as in~\eqref{kopfweoiojiwavnjbhin}.

Then, for any $\kappa>\kappa_\star$,
\[
\left(\kappa(\overline{u}-\widetilde u)-\frac{\lambda}{\mu}\right)^2 >\frac{4\kappa}{\mu}\left(\lambda\widetilde u +\dfrac{C_0}{\eta}+\frac{1}{\varepsilon^2}\left(2\overline u +\dfrac{2\varepsilon(\overline u -\underline u)}{\eta} \right)\right).
\]
\end{lemma}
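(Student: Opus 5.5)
The plan is a direct algebraic comparison using the two quantities entering the maximum in~\eqref{kopfweoiojiwavnjbhin}. First I rewrite the right-hand side in the form appearing in $\kappa_\star$. Set
\[
Q:=\lambda\widetilde u +\dfrac{C_0}{\eta}+\frac{1}{\varepsilon^2}\left(2\overline u +\dfrac{2\varepsilon(\overline u -\underline u)}{\eta} \right).
\]
By~\eqref{09iu8ygvhbdjwenAFLò,RSMKNJHGUY}, $C_0/\eta=\frac{\sqrt\lambda(\overline u-\underline u)}{\eta(1-e^{-\sqrt\lambda\eta})}$, and $\frac{1}{\varepsilon^2}\cdot\frac{2\varepsilon(\overline u-\underline u)}{\eta}=\frac{2(\overline u-\underline u)}{\varepsilon\eta}$. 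Hence $Q$ is exactly the bracket in the second entry of the maximum defining $\kappa_\star$, that is,
\[
\kappa_\star=\max\left\{\frac{2\lambda}{\mu(\overline u-\widetilde u)},\ \frac{16\,Q}{\mu(\overline u-\widetilde u)^2}\right\}.
\]
I also note that $Q>0$, because every term is positive by~\eqref{relationutilde2} and $\eta>0$.

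Next I bound the left-hand side from below using the first entry of the maximum. Since $\kappa>\kappa_\star\ge \frac{2\lambda}{\mu(\overline u-\widetilde u)}$, we have $\frac{\lambda}{\mu}\le\frac{\kappa(\overline u-\widetilde u)}{2}$. Therefore
\[
\kappa(\overline u-\widetilde u)-\frac{\lambda}{\mu}\ge \frac{\kappa(\overline u-\widetilde u)}{2}>0.
\]
Squaring this inequality between nonnegative quantities gives
\[
\left(\kappa(\overline u-\widetilde u)-\frac{\lambda}{\mu}\right)^2\ge \frac{\kappa^2(\overline u-\widetilde u)^2}{4}.
\]

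Finally I use the second entry of the maximum. Since $\kappa>\kappa_\star\ge\frac{16Q}{\mu(\overline u-\widetilde u)^2}$ and $\kappa>0$, multiplying by $\frac{\kappa(\overline u-\widetilde u)^2}{4}$ yields
\[
\frac{\kappa^2(\overline u-\widetilde u)^2}{4}>\frac{4\kappa Q}{\mu}.
\]
The inequality is strict because $\kappa>\kappa_\star$ strictly. Chaining this with the previous bound gives the claim.

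There is no real obstacle in this argument. The only point needing care is the bookkeeping in the first step: one must check that $C_0/\eta$ and the $\frac{1}{\varepsilon^2}$-term reproduce precisely the terms of $\kappa_\star$ in~\eqref{kopfweoiojiwavnjbhin}, so that the factor $16$ there matches the $4\cdot 4$ arising from the two steps above.
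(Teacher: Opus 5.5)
Your proposal is correct and follows the paper's proof essentially verbatim. You use the first entry of $\kappa_\star$ to get $\bigl(\kappa(\overline u-\widetilde u)-\lambda/\mu\bigr)^2\ge\kappa^2(\overline u-\widetilde u)^2/4$, and then the second entry to bound this below by the right-hand side. Your explicit check that $C_0/\eta$ and the $\varepsilon^{-2}$-term reproduce the bracket in \eqref{kopfweoiojiwavnjbhin} is bookkeeping the paper leaves implicit.
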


\begin{proof}
Let $\kappa>\kappa_*$.
In light of \eqref{kopfweoiojiwavnjbhin}, we have 
\[
\kappa\ge\dfrac{2\lambda}{\mu(\overline u-\widetilde u)}.
\]
Accordingly, we have that
\[
\left(\kappa(\overline{u}-\widetilde u)-\frac{\lambda}{\mu}\right)^2\ge\dfrac{\kappa^2(\overline{u}-\widetilde u)^2}{4}.
\]
Thus, it suffices to show that
\[
\dfrac{\kappa(\overline{u}-\widetilde u)^2}{4}>\frac{4}{\mu}\left(\lambda\widetilde u +\dfrac{C_0}{\eta}+\frac{1}{\varepsilon^2}\left(2\overline u +\dfrac{2\varepsilon(\overline u -\underline u)}{\eta} \right)\right),
\]
which is ensured by \eqref{kopfweoiojiwavnjbhin}.
\end{proof}

We are now in the position to complete the proof of Theorem~\ref{main2}.

\begin{proof}[Proof of Theorem~\ref{main2}]
For any $t>0$, let $h(t)$ be as in~\eqref{hdefn}. We notice that $h(t)>0$ for any $t\ge 0$, owing to~\eqref{stimespunto2}.

We claim that
\begin{equation}\label{deriveps}
\frac{\partial u}{\partial r}(s(t),t)\ge 0\quad\mbox{ for any } t>0.
\end{equation}
Indeed,  since $u(s(t), t)= \overline{u}$, recalling~\eqref{prima4} we deduce that
\[
\frac{\partial u}{\partial r}(s(t),t) = \lim_{\zeta\searrow0}\dfrac{u(s(t), t)-u(s(t)-\zeta, t)}{\zeta}\ge\lim_{\zeta\searrow0}\dfrac{\overline{u} -\overline{u}}{\zeta} =0
\]
for any $t>0$. This proves~\eqref{deriveps}.

Now, recalling~\eqref{dctfuvygibuhonijpmko,}, \eqref{relationutilde2}, \eqref{deriveps} and Propositions~\ref{propeta}
and~\ref{lemmabarriera1},
\begin{equation}\label{OjslqwdmedfX3qwoeirf30eprvf}
\begin{split}
\frac{\kappa}{\mu}\dot h(t)&\ge -\frac{C_0\,\varepsilon^2}{\eta}\int_0^t (s(t)-\varepsilon)\,d\tau -\left(2\overline u +\dfrac{2\varepsilon(\overline u -\underline u)}{\eta} \right) \int_0^t (s(\tau)-\varepsilon) \,d\tau\\
&\qquad+\left(\kappa(\overline{u}-\widetilde u)-\frac{\lambda}{\mu}\right)h(t)-\lambda\widetilde u \int_0^t h(\tau)\,d\tau-\overline\gamma.
\end{split}
\end{equation}

Moreover,  we have
\[
\begin{split}
\int_0^t (s(\tau)-\varepsilon) \,d\tau\le\int_0^t \dfrac{(s^3(\tau)-\varepsilon^3)}{3\varepsilon^2} \,d\tau = \dfrac{1}{\varepsilon^2}\int_0^t h(\tau) d\tau.
\end{split}
\]
This and~\eqref{OjslqwdmedfX3qwoeirf30eprvf} yield that
\begin{equation}\label{vycgiuobhIJNPCKL}
\begin{split}
\frac{\kappa}{\mu}\dot h(t)&\ge -\frac{C_0}{\eta}\int_0^t h(\tau) \,d\tau -\frac{1}{\varepsilon^2}\left(2\overline u +\dfrac{2\varepsilon(\overline u -\underline u)}{\eta} \right) \int_0^t h(\tau) \,d\tau\\
&\qquad+\left(\kappa(\overline{u}-\widetilde u)-\frac{\lambda}{\mu}\right)h(t)-\lambda\widetilde u \int_0^t h(\tau)\,d\tau-\overline\gamma\\&=
\left(\kappa(\overline{u}-\widetilde u)-\frac{\lambda}{\mu}\right)h(t) -\left(\lambda\widetilde u +\dfrac{C_0}{\eta}+\frac{1}{\varepsilon^2}\left(2\overline u +\dfrac{2\varepsilon(\overline u -\underline u)}{\eta} \right)\right)\int_0^t h(\tau)\,d\tau -\overline\gamma.
\end{split}
\end{equation}

We now claim that, for any $ t\ge 0$,
\begin{equation}\label{claim2uvlghbiks}
\left(\kappa(\overline{u}-\widetilde u)-\frac{\lambda}{\mu}\right)h(t)>2\left(\lambda\widetilde u +\dfrac{C_0}{\eta}+\frac{1}{\varepsilon^2}\left(2\overline u +\dfrac{2\varepsilon(\overline u -\underline u)}{\eta} \right)\right)\int_0^t h(\tau)\,d\tau +2\overline\gamma .
\end{equation}
To check this, we first establish~\eqref{claim2uvlghbiks} when~$t=0$.
For this, we use the definitions in~\eqref{hdefn} and~\eqref{overgammadefn} to find that
\begin{eqnarray*}
&&\left(\kappa(\overline{u}-\widetilde u)-\frac{\lambda}{\mu}\right)h(0)-2\overline\gamma =\left(\kappa(\overline{u}-\widetilde u)-\frac{\lambda}{\mu}\right)h(0)-2\left(\kappa \overline{u}-\frac{\lambda}{\mu}\right)h(0)+2\kappa\int_\varepsilon^{s(0)}r^2u_0(r)\,dr\\
&&\qquad=\left(\frac{\lambda}{\mu}-\kappa(\overline{u}+\widetilde u)
\right)h(0)+2\kappa\int_\varepsilon^{s(0)}r^2u_0(r)\,dr\\&&\qquad=\left(\frac{\lambda}{\mu}-
\kappa(\overline{u}+\widetilde u)
\right)\frac{s^3(0)-\varepsilon^3}3+2\kappa\int_\varepsilon^{s(0)}r^2u_0(r)\,dr\\&&\qquad=\left(\frac{\lambda}{\mu}-
\kappa(\overline{u}+\widetilde u)
\right)\int_\varepsilon^{s(0)}r^2\,dr+2\kappa\int_\varepsilon^{s(0)}r^2u_0(r)\,dr\\&&\qquad=\int_\varepsilon^{s(0)}r^2
\left( \kappa(2 u_0(r)-\overline{u} -\widetilde u)+\frac\lambda\mu\right)
\,dr\\
&&\qquad>0,
\end{eqnarray*}
thanks to assumption~\eqref{ctfyivgoubhjklmò,2}.
This shows that~\eqref{claim2uvlghbiks} is verified when~$t=0$. 

Now, let $t>0$ and assume by contradiction that~\eqref{claim2uvlghbiks} does not hold. Then, there exists~$t_0>0$ such that
\begin{equation}\label{ucgbodjkwòlm}\begin{split}
& \left(\kappa(\overline{u}-\widetilde u)-\frac{\lambda}{\mu}\right)h(t)\\
&\qquad>2\left(\lambda\widetilde u +\dfrac{C_0}{\eta}+\frac{1}{\varepsilon^2}\left(2\overline u +\dfrac{2\varepsilon(\overline u -\underline u)}{\eta} \right)\right)\int_0^t h(\tau)\,d\tau +2\overline\gamma\quad\mbox{ for any } t\in (0, t_0),\\
\mbox{ and } \quad
&\left(\kappa(\overline{u}-\widetilde u)-\frac{\lambda}{\mu}\right)h(t_0)=2\left(\lambda\widetilde u +\dfrac{C_0}{\eta}+\frac{1}{\varepsilon^2}\left(2\overline u +\dfrac{2\varepsilon(\overline u -\underline u)}{\eta} \right)\right)\int_0^{t_0} h(\tau)\,d\tau +2\overline\gamma .
\end{split}\end{equation}
As a consequence, we have
\begin{equation}\label{cfyivgygobhiuajnvòk}
\left(\kappa(\overline{u}-\widetilde u)-\frac{\lambda}{\mu}\right)\dot h(t_0)\le 2\left(\lambda\widetilde u +\dfrac{C_0}{\eta}+\frac{1}{\varepsilon^2}\left(2\overline u +\dfrac{2\varepsilon(\overline u -\underline u)}{\eta} \right)\right) h(t_0).
\end{equation}

We notice that,  as a byproduct of~\eqref{kopfweoiojiwavnjbhin}, 
\begin{equation}\label{bdwipKMEJFà-}
\kappa(\overline{u}-\widetilde u)-\frac{\lambda}{\mu}>0.
\end{equation}
Accordingly, by using~\eqref{vycgiuobhIJNPCKL}, \eqref{ucgbodjkwòlm},  and Lemma \ref{kmcaHVIBEWòonvcàwkoplèò}, we deduce that
\[
\begin{split}
&\frac{\kappa}{\mu}\left(\kappa(\overline{u}-\widetilde u)-\frac{\lambda}{\mu}\right)\dot h(t_0)\\
&\quad\ge \left(\kappa(\overline{u}-\widetilde u)-\frac{\lambda}{\mu}\right)^2 h(t_0)\\
&\qquad-\left(\kappa(\overline{u}-\widetilde u)-\frac{\lambda}{\mu}\right)\left(\left(\lambda\widetilde u +\dfrac{C_0}{\eta}+\frac{1}{\varepsilon^2}\left(2\overline u +\dfrac{2\varepsilon(\overline u -\underline u)}{\eta} \right)\right)\int_0^{t_0} h(\tau)\,d\tau+\overline\gamma\right)\\
&\quad=\left(\kappa(\overline{u}-\widetilde u)-\frac{\lambda}{\mu}\right)^2 h(t_0)-\frac12\left(\kappa(\overline{u}-\widetilde u)-\frac{\lambda}{\mu}\right)^2 h(t_0)\\
&\quad=\frac12\left(\kappa(\overline{u}-\widetilde u)-\frac{\lambda}{\mu}\right)^2 h(t_0)\\
&\quad >\frac{2\kappa}{\mu}\left(\lambda\widetilde u +\dfrac{C_0}{\eta}+\frac{1}{\varepsilon^2}\left(2\overline u +\dfrac{2\varepsilon(\overline u -\underline u)}{\eta} \right)\right) h(t_0),
\end{split}
\]
which is in contradiction with~\eqref{cfyivgygobhiuajnvòk}. This establishes the desired claim in~\eqref{claim2uvlghbiks}.

Now, by using~\eqref{claim2uvlghbiks} in~\eqref{vycgiuobhIJNPCKL}, we conclude that
\[
\frac{\kappa}{\mu}\dot h(t)\ge\frac12\left(\kappa(\overline{u}-\widetilde u)-\frac{\lambda}{\mu}\right) h(t)\quad\mbox{ for any } t> 0,
\]
namely, by recalling the definition in~\eqref{hdefn},
\[
s^3(t)\ge \varepsilon^3 + (s^3(0)-\varepsilon^3) e^{\frac{\mu}{2\kappa}\left(\kappa(\overline{u}-\widetilde u)-\frac{\lambda}{\mu}\right)t}\quad\mbox{ for any } t>0.
\]
Recalling that \eqref{bdwipKMEJFà-} is in force, by taking the limit as $t\to +\infty$, we obtain the desired result.
\end{proof}

\begin{appendix}

\section{Examples of initial cell distributions satisfying~\eqref{oiuygtfdoiuy2}, \eqref{ctfyivgoubhjklmò,2} and~\eqref{lpko,mijnhugyftcdrxctfvl,ò}}\label{apprdfctbhjkl–òlkjhbgvfdrdxcfvgbhnjmk,}

Here we present some examples of functions~$u_0(r)$ verifying all the assumptions requested in Theorem~\ref{main2}, namely~\eqref{oiuygtfdoiuy2}, \eqref{ctfyivgoubhjklmò,2} and~\eqref{lpko,mijnhugyftcdrxctfvl,ò}.

\begin{example}\label{Exc1}
We consider the linear initial datum
\[
u_0(r):= \underline u +\dfrac{\overline u -\underline u}{s(0)-\varepsilon}(r-\varepsilon),\quad {\mbox{for any }}r\in [\varepsilon, s(0)],
\]
and we notice that
\[
u_0(\varepsilon)=\underline u \qquad\mbox{ and }\qquad u_0(s(0))=\overline u.
\]
Also, we have that such~$u_0$ satisfies~\eqref{oiuygtfdoiuy2}, since~$u_0\ge \underline u>\frac{\Gamma u_B}{\lambda}$,
thanks to~\eqref{relationutilde2}.

We now prove that \eqref{ctfyivgoubhjklmò,2} is satisfied.
Indeed,
\[
\int_{\varepsilon}^{s(0)} u_0(r) dr = \underline u (s(0)-\varepsilon) +\dfrac{\overline u-\underline u}{s(0)-\varepsilon}\int_{\varepsilon}^{s(0)} (r-\varepsilon) dr=\dfrac{\overline u +\underline u}{2}(s(0)-\varepsilon).
\]
From this and the Chebyshev integral inequality, we deduce that
\[
\int_{\varepsilon}^{s(0)} u_0(r) r^2 dr\ge\dfrac{1}{s(0)-\varepsilon}\left(\int_{\varepsilon}^{s(0)} u_0(r) dr\right)\left(\int_{\varepsilon}^{s(0)} r^2 dr\right)\ge \dfrac{\overline u +\underline u}{2} \int_{\varepsilon}^{s(0)} r^2 dr.
\]
Accordingly,  by recalling \eqref{relationutilde2}, we conclude that
\[
\begin{split}
&\int_\varepsilon^{s(0)} (2 u_0(r)-\overline{u} -\widetilde u) r^2\, dr = \int_\varepsilon^{s(0)} 2u_0(r) r^2\, dr -(\overline u+\widetilde u)\int_\varepsilon^{s(0)} r^2 \, dr\\
&\quad\ge (\overline u+\underline u)\int_\varepsilon^{s(0)} r^2 \, dr - (\overline u+\widetilde u)\int_\varepsilon^{s(0)} r^2 \, dr\\
&\quad = (\underline u-\widetilde u)\int_\varepsilon^{s(0)} r^2 \, dr>0, 
\end{split}
\]
as desired.

We now show that~\eqref{lpko,mijnhugyftcdrxctfvl,ò} is also fulfilled.
For this, we observe that the function
$$w(r):=\dfrac{1-e^{-\sqrt\lambda(r-\varepsilon)}}{1-e^{-\sqrt\lambda (s(0)-\varepsilon)}}$$ is strictly concave in~$[\varepsilon, s(0)]$ and satisfies~$ w(\varepsilon)=0$ and~$w(s(0))=1$.
Therefore,
$$ \dfrac{r-\varepsilon}{s(0)-\varepsilon}\le w(r), \quad {\mbox{for any }}r\in [\varepsilon, s(0)]$$
and this entails~\eqref{lpko,mijnhugyftcdrxctfvl,ò}.
\end{example}

\begin{example}
Let 
\begin{equation}\label{xfgyucvghjcvhj1}
\alpha:=\min\left\{\frac{s(0)}{2},\frac{1}{2},\frac{(\underline u-\widetilde u)(s^3(0)-\varepsilon^3)}{(3\varepsilon^2+3\varepsilon+1)(\overline u+\underline u-2\frac{\Gamma u_B}{\lambda})}\right\}
\end{equation} and~$u_0\in C^\infty([\varepsilon,s(0)])$ be such that 
\[
\begin{aligned}
&u_0(\varepsilon)=\underline u,
\qquad \frac{\Gamma u_B}{\lambda}<\min_{r\in(\varepsilon,\varepsilon+\alpha)}u_0(r)<\underline u, \\
\mbox{and }\quad & u_0(r):=\underline u +\dfrac{\overline u -\underline u}{s(0)-\varepsilon-\alpha}(r-\varepsilon-\alpha),\quad {\mbox{for any }}
r\in [\varepsilon+\alpha, s(0)].
\end{aligned}
\]
By construction, and recalling~\eqref{relationutilde2}, we have that~$u_0>\frac{\Gamma u_B}{\lambda}$, namely~\eqref{oiuygtfdoiuy2} is satisfied.

We now establish \eqref{ctfyivgoubhjklmò,2}. For this, we observe that, by~\eqref{xfgyucvghjcvhj1},
\begin{equation}\label{xfgyucvghjcvhj2}
\begin{aligned}
\int_\varepsilon^{\varepsilon+\alpha}(2u_0(r)-\overline u -\widetilde u)r^2dr&\geq -\frac{1}{3}\left(\overline u+\widetilde u -\frac{\Gamma u_B}{\lambda}\right)[(\varepsilon+\alpha)^3-\varepsilon^3] \\
&>-\frac{\alpha}{3}\left(\overline u+\widetilde u -\frac{\Gamma u_B}{\lambda}\right)(3\varepsilon^2+3\varepsilon+1).
\end{aligned}
\end{equation}
Moreover, we have
\[
\begin{aligned}
\int_{\varepsilon+\alpha}^{s(0)} u_0(r)\,dr &=\underline u (s(0)-\varepsilon-\alpha)+\frac{\overline u-\underline u}{s(0)-\varepsilon-\alpha}\int_{\varepsilon+\alpha}^{s(0)} (r-\varepsilon-\alpha)\,dr \\
&=\frac{(\overline u+\underline u)(s(0)-\varepsilon-\alpha)}{2}.
\end{aligned}
\]
Hence
\[
\int_{\varepsilon+\alpha}^{s(0)} (2u_0(r)-\overline u -\widetilde u)\,dr = (\underline u-\widetilde u)(s(0)-\varepsilon-\alpha).
\]
From this, the Chebyshev integral inequality and \eqref{xfgyucvghjcvhj1} we infer that
\begin{equation}\label{xfgyucvghjcvhj3}
\begin{aligned}
\int_{\varepsilon+\alpha}^{s(0)} (2u_0(r)-\overline u -\widetilde u)r^2dr &\ge\dfrac{1}{s(0)-\varepsilon-\alpha}\left(\,\int_{\varepsilon+\alpha}^{s(0)} (2 u_0(r)-\overline u-\widetilde u) dr\right)\left(\,\int_{\varepsilon+\alpha}^{s(0)} r^2 dr\right)\\
&\ge \frac{(\underline u -\widetilde u)(s^3(0)-(\varepsilon+\alpha)^3)}{3} \\
&>\frac{(\underline u -\widetilde u)(s^3(0)-\varepsilon^3-\alpha(3\varepsilon^2+3\varepsilon+1))}{3}.
\end{aligned}
\end{equation}
Combining \eqref{xfgyucvghjcvhj2} and \eqref{xfgyucvghjcvhj3} we get
\[
\int_{\varepsilon}^{s(0)} (2u_0(r)-\overline u -\widetilde u)r^2dr >  \frac{1}{3}\left[(\underline u -\widetilde u)(s^3(0)-\varepsilon^3)-\alpha(3\varepsilon^2+3\varepsilon+1)\left(\overline u+\underline u -\frac{\Gamma u_B}{\lambda}\right)\right].
\]
This together with \eqref{xfgyucvghjcvhj1} implies \eqref{ctfyivgoubhjklmò,2}, as desired.

Furthermore, since $u_0$ here is smaller than the one in Example \ref{Exc1}, we infer that \eqref{lpko,mijnhugyftcdrxctfvl,ò} holds true as well.
\end{example}

\end{appendix}

\section*{Acknowledgements} 
SD, CS and EV are members of the Australian Mathematical Society (AustMS). CS and EPL are members of the INdAM--GNAMPA.

CS acknowledges the support of the Juan de la Cierva Fellowship (grant number JDC2023-050365-I).

This work has been supported by the Australian Laureate Fellowship FL190100081 and by the Australian Future Fellowship FT230100333.

\vfill

\end{document}